\DeclareMathAlphabet{\pazocal}{OMS}{zplm}{m}{n}
\newtheorem{theorem}{Theorem}[section]
\newtheorem{lemma}{Lemma}[section]
\newtheorem{corollary}{Corollary}[section]
\newtheorem{remark}{Remark}[section]
\xpatchcmd{\algorithmic}{\itemsep\z@}{\itemsep=0.4ex plus2pt}{}{}
\newcommand{\Th}{\mathcal{T}_h}
\newcommand{\Eh}{\mathcal{E}_h}
\newcommand{\poly}{\mathcal{P}}
\newcommand{\trans}{\mathrm{T}}
\newcommand{\CR}{\scaleto{CR}{5pt}}
\newcommand{\T}{\scaleto{T}{5pt}}
\def\dunderline#1{\underline{\underline{#1}}}
\renewcommand*\env@matrix[1][\arraystretch]{%
	\edef\arraystretch{#1}%
	\hskip -\arraycolsep
	\let\@ifnextchar\new@ifnextchar
	\array{*\c@MaxMatrixCols c}}
\newcommand\red[1]{{\color{black}{#1}}}
\pgfplotsset{compat=1.15}
\begin{document}
\title[MG for {\sf HDG-P0}]{Optimal Geometric Multigrid Preconditioners for {\sf HDG-P0} Schemes for the reaction-diffusion equation and the Generalized Stokes equations}

\author{Guosheng Fu}
\address{Department of Applied and Computational Mathematics and 
Statistics, University of Notre Dame, USA.}
\email{gfu@nd.edu}
\thanks{We gratefully acknowledge the partial support of this work
	by the U.S. National Science Foundation through grant DMS-2012031.}
\author{Wenzheng Kuang}
\address{Department of Applied and Computational Mathematics and 
	Statistics, University of Notre Dame, USA.}
\email{wkuang1@nd.edu}

\keywords{HDG, multigrid
preconditioner, Crouzeix-Raviart element, reaction-diffusion, generalized Stokes equations}
\subjclass{65N30, 65N12, 76S05, 76D07}
\begin{abstract}
We present the lowest-order hybridizable discontinuous Galerkin schemes with \red{numerical integration (quadrature)}, denoted as {\sf HDG-P0}, for the reaction-diffusion equation and the generalized Stokes equations 
\red{on conforming simplicial meshes} in two- and three-dimensions. Here by lowest order, we mean that the (hybrid) finite element space for the global HDG facet degrees of freedom (DOFs) is the space of piecewise constants on the mesh skeleton.
\red{A discontinuous piecewise linear space is used for the approximation of the local primal unknowns.}
We give the optimal a priori error analysis of the proposed {\sf HDG-P0} schemes, which hasn't appeared in the literature yet for HDG discretizations as far as numerical integration is concerned.

\red{Moreover, we propose optimal geometric multigrid preconditioners for the statically condensed {\sf HDG-P0} linear systems on conforming simplicial meshes.}
In both cases, we first establish the equivalence of the statically condensed HDG system with a
(slightly modified) nonconforming Crouzeix-Raviart (CR) discretization, where the global (piecewise-constant) HDG finite element space on the mesh skeleton has a natural one-to-one correspondence to the nonconforming CR (piecewise-linear) finite element space that live on the whole mesh.
This equivalence then allows us to use the well-established nonconforming geometry multigrid theory to precondition the condensed HDG system. Numerical results in two- and three-dimensions are presented to verify our theoretical findings.
\end{abstract}
\maketitle

\section{Introduction}
Since \red{their} first unified introduction for the second order elliptic equation \cite{CockburnGopalakrishnanLazarov09}, hybridizable discontinuous Galerkin (HDG) methods have been gaining popularity for numerically solving partial differential equations (PDEs), and have been successfully applied in computational fluid dynamics\cite{cockburn2018discontinuous,qiu2016superconvergent}, wave propagation\cite{cockburn2016hdg, fernandez2018hybridized} and continuum mechanics\cite{nguyen2012hybridizable,fu2021locking}. Besides succeeding attractive features from the discontinuous Galerkin (DG) schemes including 
local conservation, allowing unstructured meshes with hanging nodes, and ease of $hp$-adaptivity, linear systems of HDG schemes can be statically condensed such that only global DOFs on the mesh skeleton remain, resulting in increased sparsity, decreased matrix size, and computational cost\cite{Cockburn16}. 
One HDG technique, known as \textit{projected jumps} \cite[Remark 1.2.4]{lehrenfeld2010hybrid},  further reduces the size of the condensed HDG scheme by making the polynomial spaces of the facet unknowns one order lower than those of the primal variables without loss of accuracy. Thus superconvergence is obtained for the primal variables from the point of view of the globally coupled DOFs. \red{The superconvergence result} was rigorously proved for the primal HDG schemes with projected jumps 
for diffusion and Stokes problems in \cite{oikawa2015hybridized, oikawa2015reduced}, and also for the mixed HDG schemes with projected jumps for linear elasticity \cite{qiu2018hdg}, convection-diffusion \cite{qiu2016hdg}, and incompressible Navier-Stokes \cite{qiu2016superconvergent}. However,  no numerical integration effects were considered in these works.

Large-scale simulations based on HDG schemes still face the challenge of constructing scalable and efficient solvers for the condensed linear systems, for which geometric multigrid techniques have been previously explored as either linear system solvers  or preconditioners for iterative methods \cite{CockburnDubois14,lu2021homogeneous}.
The main difficulty in designing geometric multigrid algorithms for HDG is the construction of intergrid transfer operators between  the coarse and fine meshes, since the spaces of global unknowns on hierarchical meshes live (only) on mesh skeletons and are non-nested. 

In the literature, two techniques have been used to overcome this difficulty. The first approach, which is in the spirit of auxiliary space preconditioning \cite{Xu96}, uses the conforming piecewise linear finite element method on the same mesh as the coarse grid solver and applies the standard geometric multigrid for conforming finite elements from then on. Hence, the difficulty of constructing integergrid transfer operators between global HDG facet spaces on fine and coarse grids was completely bypassed as the coarse grid HDG space was never used in the multigrid algorithm.
This technique was first introduced for HDG schemes in \cite{CockburnDubois14} for the diffusion problem, and similarly used in \cite{wgchen,chen2014robust, betteridge2021multigrid, fabien2019manycore} for diffusion and other equations.

The second approach still keeps the HDG facet spaces on coarser meshes to construct the multigrid algorithms, which is referred to as {\it homogeneous multigrid} in \cite{lu2021homogeneous} since it uses the same HDG discretization scheme on all mesh levels. 
Here the issue of stable intergrid transfer operators between coarse and fine grid HDG facet spaces  has to be addressed. 
Several ``{obvious}" HDG prolongation operators were numerically tested in Tan's 2009 PhD thesis \cite{Tan09}, which, however, failed to be optimal.
The work \cite{wildey2019unified} proposed an intergrid transfer operator based on Dirichlet-to-Neumann maps where operators on coarser levels were recursively changed for energy preservation.
Numerical results in \cite{wildey2019unified} supported the robustness of their multigrid algorithm, but no theoretical analysis was presented.
More recently, a three-step procedure was used in \cite{lu2021homogeneous} to construct a robust prolongation operator:  first, define a continuous extension operator from coarse grid HDG facet space to an $H^1$-conforming finite element space of the same degree on the same mesh by averaging; next, use the natural injection from the coarse grid conforming finite element space to the fine grid conforming finite element space;
lastly, restrict the fine grid conforming finite elements data to the fine grid mesh skeleton to recover the fine grid HDG facet data (see more details in \cite{lu2021homogeneous}).
The optimal convergence result of the standard V-cycle algorithm was 
proven in \cite{lu2021homogeneous} for an HDG scheme with stabilization parameter  $\tau=O(1/h)$ for polynomial degree $k\ge 1$ for the diffusion problem, where $h$ is the mesh size.
Therein, taking stabilization parameter $\tau=O(1/h)$ is crucial in the optimal multigrid analysis in \cite{lu2021homogeneous}, which however is not usually preferred in practice as the resulting scheme loses the important property of superconvergence due to too strong stabilization. 
Nevertheless, the numerical results presented in \cite{lu2021homogeneous} suggested the multigrid algorithm was still optimal when the stabilization parameter $\tau=O(1)$, which has the superconvergence property. 
\red{We note that in these cited works, the finite element spaces in the HDG schemes use the same polynomial degree for all the variables.}
Intergrid operators based on 
(superconvergent) post-processing and smoothing was
also presented in \cite{di2021h,di2021towards}
for the hybrid high-order (HHO) methods.
We finally note that the above cited works in the second approach only focus on the pure diffusion problem.


In this study, we work on HDG discretizations for the reaction-diffusion equation and the generalized Stokes equations \red{on conforming simplicial meshes}.
Specifically, we construct lowest-order HDG schemes with projected jumps and numerical integration, denoted as {\sf HDG-P0}, for these two sets of equations and present their optimal (superconvergent) a priori error analysis.
\red{We use the space of piecewise constants for the approximation of the (global) hybrid facet unknowns, and piecewise linears for the (local) primal unknowns, and take stabilization parameter $\tau=O(1/h)$. Due to the use of piecewise linears for the element-wise primal unknowns and the projected jumps in the stabilization, the proposed schemes still enjoy the superconvergence property.}
We further provide their optimal multigrid preconditioners for the global HDG linear systems with easy-to-implement intergrid transfer operators.
The key of our optimal multigrid preconditioner is the establishment of the equivalence of the condensed {\sf HDG-P0} schemes with the (slightly modified) nonconforming CR discretizations.
Such equivalence enables us to build HDG multigrid preconditioners
easily following the rich literature 
on nonconforming multigrid theory for the reaction-diffusion equation \cite{brenner1989optimal, braess1990multigrid, bramble1991analysis, chen1994analysis, brenner1999convergence, brenner2004convergence, duan2007generalized} and the generalized Stokes equations \cite{brenner1990nonconforming, turek1994multigrid, stevenson1998cascade, brenner1993nonconforming, schoberl1999multigrid, schoberl1999robust}.
We specifically note that equivalence of the lowest order Raviart-Thomas mixed method with certain nonconforming methods
was well-known in the literature \cite{arnold1985mixed, marini1985inexpensive}, and multigrid algorithms for hybrid-mixed methods have been designed  based on this fact \cite{brenner1992multigrid, chen1996equivalence}. 
Moreover, an equivalence between the lowest-order primal HDG schemes with projected jumps and the nonconforming CR schemes
for pure diffusion and Stokes problems have already been established by Oikawa in \cite{oikawa2015hybridized, oikawa2015reduced}. Here we use the mixed HDG formulation with projected jumps and establish the stronger equivalence of the two methods for reaction-diffusion and generalized Stokes equations, which has application in time-dependent diffusion and time-dependent incompressible flow problems. As far as the low-order terms are concerned, we find the mixed HDG formulation more robust to formulate and easier to analyze compared with their primal HDG counterparts. In particular, our final algorithm does not involve any (sufficiently large) tunable stabilization parameters.  
\red{We note that our {\sf HDG-P0} multigrid methods rely on conforming and shape-regular simplicial meshes, which can not handle hanging nodes. Besides, the approach of building multigrid methods based on the equivalence to the CR discretizations is specific to {\sf HDG-P0}, which 
can not be directly applied to higher order HDG schemes.
However, it can be used as a building block when constructing robust $hp$-multigrid methods for higher order HDG schemes, which is our ongoing research.
}

The rest of the paper is organized as follows. Basic notations and the finite element spaces to be applied in {\sf HDG-P0} are introduced in Section \ref{sec:notation}. In Section \ref{sec:MG4ellip}, we propose the {\sf HDG-P0} scheme for the mixed formulation of the reaction-diffusion equation, give the optimal a priori error analysis, and present the optimal multigrid preconditioner for the static condensed linear system. We then focus on the {\sf HDG-P0} scheme for the generalized Stokes equations
in Section~\ref{sec:MG4Stokes} and present the optimal a priori error estimates and multigrid preconditioner. Numerical results in two- and three-dimensions are presented in Section \ref{sec:numExp} to verify the theoretical findings in Section \ref{sec:MG4ellip}--\ref{sec:MG4Stokes}, and we conclude in Section~\ref{sec:conclude}.

\section{Notations and Preliminaries}
\label{sec:notation}
Let $\Omega\subset \mathbb{R}^d$, $d\in\{2,3\}$,
be a bounded polygonal/polyhedral domain with boundary $\partial \Omega$.
We denote $\Th$ as a conforming, \red{shape-regular}, and quasi-uniform simplicial triangulation of  $\Omega$.
Let $\Eh$ be the collection of the facets (edges in 2D, faces in 3D) of the triangulation $\Th$, which is also referred to as the \textit{mesh skeleton}.
We split the mesh skeleton $\Eh$ into the boundary contribution 
$\Eh^\partial := \{F\in\Eh :\; F\subset\partial\Omega \}$, and the interior contribution $\Eh^o := \Eh \backslash \Eh^\partial$.
For any simplex $K\in\Th$ with boundary $\partial K$, we denote the measure of $K$ by $|K|$, the $L_2$-inner product on $K$ by $(\cdot,\;\cdot)_K$, and 
the $L_2$-inner product on $\partial K$ by $\langle\cdot,\; \cdot\rangle_{\partial K}$.
We denote $\underline{n}_K$ as the unit normal vector on the element boundaries $\partial K$ pointing outwards.
Furthermore, we denote $h_{K}$ as the mesh size of $K$ defined only on the facets $\partial K$, whose restriction on a facet $F\subset\partial K$ is given as $h_K|_{F}:=|K|/|F|$, \red{where $|F|$ is the measure of $F$,} and set 
\[
h := \max_{K\in\Th}\max_{F\subset \partial K}h_K|_{F}
\]
as the maximum mesh size of the triangulation $\Th$.
To further simplify the notation, we define the discrete $L_2$-inner product on the whole mesh as $(\cdot,\; \cdot)_{\Th} := \sum_{K\in\Th}(\cdot,\; \cdot)_K$ and the $L_2$-inner product on all element boundaries as $\langle\cdot,\; \cdot\rangle_{\partial\Th} := \sum_{K\in\Th}\langle\cdot,\; \cdot\rangle_{\partial K}$.

As usual, we denote $\|\cdot\|_{m,p,S}$ and $|\cdot|_{m,p,S}$ as the norm and semi-norm of the Sobolev spaces $W^{m,p}(S)$ for the domain $S\subset \mathbb{R}^d$, with $p$ omitted when $p=2$, and $S$ omitted when $S=\Omega$ is the whole domain. 
To this end, we define some numerical integration rules that will be used in this work.
Given a simplex $K\subset \mathbb{R}^d$, we define 
the following two integration rules for the volume integral $\int_K g\,\mathrm{dx}$:
\begin{align*}
    Q_K^0(g):= &\; |K|\,g(m_K),\\
    Q_K^1(g):= &\;\frac{|K|}{d + 1}\sum_{i = 1}^{d + 1}g(m_K^i),
\end{align*}
where $m_K$ is the barycenter of the element $K$, and
$m_K^i$ is the barycenter of the $i$-th facet of $K$ for $i\in[1, d+1]$.
The one-point integration rule $Q_K^0$ is exact for linear functions, while the $(d+1)$-point integration rule $Q_K^1$
is exact for linear functions when $d=3$ and exact for quadratic functions when $d=2$.
We also define a one-point quadrature rule for the facet integral $\int_Fg\,\mathrm{ds}$ over a facet $F\subset \mathbb{R}^{d-1}$:
\begin{align*}
    Q_{F}^0(g):= |F|\, g(m_F),
\end{align*}
where $m_F$ is the barycenter of the facet $F$, which is exact for linear functions. 
Then we denote the numerical integration on the element boundary $\partial K$ as
\begin{align*}
    Q_{\partial K}^0(g):= \sum_{i=1}^{d+1}
    Q_{F^i}(g) = 
    \sum_{i=1}^{d+1}  |F^i|\, g(m_{F^i}),
\end{align*}
where $\{F^i\}_{i=1}^{d+1}$ is the collection of facets of $K$ with $F^i\subset \partial K$.

Moreover, we will use the following finite element spaces:
\begin{subequations}
\label{fespace}
\begin{alignat}{2}
    &W_h &&:= \{
        q_h \in L_2(\Omega):\; q_h|_K \in \poly^0(K),\; \forall K\in\mathcal{T}_h \},
    \\
    &W_h^0 &&:= \{
        q_h \in \red{W_h}:\; \int_{\Omega}q_h\mathrm{dx} = 0\},
    \\
    &V_h &&:= \{
        v_h \in L_2(\Omega):\; v_h|_K \in \poly^1(K),\; \forall K\in\mathcal{T}_h\},
    \\
    &V_h^{\CR} && := \{v_h \in V_h:\; \text{$v_h$ is continuous at the barycenter $m_F$ of $F$, $\forall F\in\Eh^o$}
        \},
    \\
    &V_h^{\CR,0} && := \{v_h \in V_h^{\CR}:\; \text{$v_h(m_F) = 0$, $\forall F\in\Eh^\partial$}\},
    \\
    &M_h &&:= \{
        \widehat{v}_h \in L_2(\mathcal{E}_h):\; \widehat{v}_h|_F \in \poly^0(F),\; \forall F\in\mathcal{E}_h\},
    \\
    &M_h^0 &&:= \{
        \widehat{v}_h \in M_h:\;\widehat{v}_h|_{F} = 0,\; \forall F\subset\partial\Omega\}.
\end{alignat}
\end{subequations}
We use underline to denote the vector version of the spaces $\underline{\;\cdot\;}:={[\;\cdot\;]}^d$, and double-underline to denote the matrix version $\dunderline{\;\cdot\;}:={[\;\cdot\;]}^{d\times d}$; e.g., 
$\dunderline{W}_h:=[W_h]^{d\times d}$ is the $d\times d$ tensor space of piecewise constant functions.
 
For two positive constants $a$ and $b$, we denote $a\lesssim b$ if there exists a positive constant $C$ independent of  mesh size and model parameters 
such that $a\le Cb$. We denote $a \simeq b$ when $a\lesssim b$ and $b\lesssim a$.

\section{{\sf HDG-P0} for the reaction-diffusion equation}
\label{sec:MG4ellip}
\subsection{The model problem}
Let $f\in L_2(\Omega)$, $\alpha\in C^1(\bar{\Omega})$ and 
$\beta\in C^0(\bar{\Omega})$. 
We assume there exist positive constants $\alpha_0$, $\alpha_1$ and $\beta_1$ with
$\alpha_1\lesssim \alpha_0$
such that $\alpha_1\ge \alpha\ge \alpha_0>0$ and 
$\beta_1\ge \beta\ge 0$. We consider the following model problem with a homogeneous Dirichlet boundary condition:
\begin{align}
    \label{ellipOrig}
- \nabla\cdot(\alpha \nabla u)+\beta u =& f \quad \text{in $\Omega$},
    \\
    \nonumber
    u =& 0 \quad \text{on $\partial \Omega$}.
\end{align}
To define the {\sf HDG-P0} scheme, we shall reformulate equation \eqref{ellipOrig} as the following first-order system by introducing the flux $\underline{\sigma}:=-\alpha\nabla u$ as a new variable:
\begin{subequations}
\label{ellipModel}
    \begin{alignat}{2}
        \alpha^{-1}\underline{\sigma} + \nabla u = &\; 0 &&\quad \text{ in }\Omega,\\
        \nabla\cdot\underline{\sigma} + \beta u = &\; f &&\quad \text{ in }\Omega,\\
        u = &\; 0 &&\quad \text{ on }\partial\Omega.
    \end{alignat}
\end{subequations}

\subsection{The {\sf HDG-P0} scheme}
With the mesh and finite element spaces given in Section \ref{sec:notation}, we are ready to present the {\sf HDG-P0} scheme 
for the system \eqref{ellipModel}.
A defining feature of hybrid finite element schemes is the use of finite element spaces to approximate the primal variable $u$ both in the mesh $\Th$ and on the mesh skeleton $\Eh$.
We use the (discontinuous) piecewise linear finite element space $V_h$ to approximate $u$ in the mesh $\Th$, and 
the (discontinuous) piecewise constant space $M_h^0$ to approximate $u$ on the  mesh skeleton $\Eh$, where the homogeneous Dirichlet boundary condition has been built-in to the space $M_h^0$. For the flux variable $\underline{\sigma}$, we approximate it using the vectorial piecewise constant function space $\underline{W}_h$.
Using these finite element spaces and the numerical integration rules introduced in Section \ref{sec:notation}, 
the weak formulation of the {\sf HDG-P0} scheme is now given as follows:
find $(\underline{\sigma}_h,\; u_h,\; \widehat{u}_h)\in\underline{W}_h \times V_{h} \times M_{h}^0$ such that:
\begin{subequations}
\label{ellipWeak0}
\begin{align}
\label{wkA}
    \sum_{K\in\Th}\left(Q_K^0(\alpha^{-1}_h\underline{\sigma}_h\cdot\underline{r}_h) + 
    Q_{\partial K}^0(\widehat{u}_h\, \underline{r}_h\cdot\underline{n}_K)\right)
    &= 0,
    \\
    \label{wk1}
        \sum_{K\in\Th}\left(Q_{\partial K}^0(\tau_K(u_h-\widehat{u}_h)v_h)
   +Q_K^1(\beta u_h v_h)\right)
    &=     \sum_{K\in\Th}Q_K^1(f v_h),\\
    \label{wkB}
\sum_{K\in\Th}
  \left(  Q_{\partial K}^0(\underline{\sigma}_h\cdot\underline{n}_K\widehat{v}_h)
    +
    Q_{\partial K}^0(\tau_K(u_h-\widehat{u}_h)\widehat{v}_h)
    \right)
    &= 0,
\end{align}
\end{subequations}
for all $(\underline{r}_h,\; v_h,\; \widehat{v}_h)\in\underline{W}_h \times V_h \times M_{h}^0$, where ${\alpha}_h^{-1}\in W_h$ is the $L_2$-projection of $\alpha^{-1}$ onto the piecewise constant space $W_h$, 
and $\tau_K:=\frac{\alpha_h}{h_K}$ is the stabilization parameter, with $\alpha_h:=(\alpha_h^{-1})^{-1}$.
Taking test functions $\underline{r}_h=\underline{\sigma}_h$, 
$v_h=u_h$, and
$\widehat{v}_h=-\widehat{u}_h$ in \eqref{ellipWeak0}, and adding, we have the following energy identity:
\begin{align*}
 \sum_{K\in\Th}\left(Q_K^0(\alpha^{-1}_h|\underline{\sigma}_h|^2)+
 Q_{\partial K}^0(\tau_K(u_h-\widehat{u}_h)^2)
 +Q_{K}^1(\beta u_h^2)
 \right) = 
  \sum_{K\in\Th} Q_K^1(fu_h).
\end{align*}
\red{We note that the quadratures used in \eqref{ellipWeak0}
guarantees the scheme is locally mass-conservative with the numerical flux defined as 
\[
    \widehat{\underline{\sigma}}_h\cdot\underline{n}_K:= \underline{\sigma}_h\cdot\underline{n}_K+\tau_K\varPi_0(u_h-\widehat{u}_h),
\]
where $\varPi_0$ is the $L_2$-projection onto the skeleton space $M_h$, since the equation \eqref{wkB}
is equivalent to 
\[    \left\langle
\widehat{\underline{\sigma}}_h\cdot\underline{n}_K, \widehat{v}_h\right\rangle_{\partial\Th}
    = 0, \quad\forall \widehat{v}_h\in M_h^0.
\]
See more discussion in Lemma \ref{lemma:ellipEq} below.
}

\subsection{An a priori error analysis for {\sf HDG-P0}}

\red{To simplify the a priori error analysis in this subsection, we assume the exact solution $(\underline{\sigma}, u) \in \underline{H}^1(\Omega)\times (H^2(\Omega)\cap H^1_0(\Omega))$ for the model problem \eqref{ellipModel}, together with the following full elliptic regularity result:
\begin{align}
\label{reg}
    \alpha_0^{-1/2} \|\underline{\sigma}\|_1 
    + (\alpha_1^{1/2}+\beta_1^{1/2}) \|u\|_2 
    \lesssim c_{reg} \|f\|_0,
\end{align}
which holds when the domain $\Omega$ is convex.
}

To perform an a priori error analysis of the scheme~\eqref{ellipWeak0} with numerical  integration, we follow the standard convention \cite{ciarlet2002finite} by assuming the following stronger regularity for $\beta$ and $f$:
\[
\beta\in W^{1,\infty}(\bar\Omega), \quad
f\in W^{1,\infty}(\bar\Omega).
\]
We use the following approximation results of the quadrature rule $Q_K^1$, which is adapted from Ciarlet
\cite[Theorem 4.1.4--4.1.5]{ciarlet2002finite}.
\begin{lemma}
    \label{lem:numIntErr}
    For any $K\in\mathcal{T}_h$, $f, u\in W^{1, \infty}(\bar K)$, and $v\in\poly^1(K)$, we have:
    \begin{subequations}
    \begin{align}
        \label{numIntErr1}
        |(f,\; v)_K - Q_K^1(fv)| &\lesssim 
        h^{1+d/2}\|f\|_{1,\infty,K}\|v\|_{1,K},
        \\
        \label{numINtErr2}
        |(fu,\; v)_K - Q_K^1(fuv)| &\lesssim
        h\|f\|_{1,\infty,K}\|u\|_{1,K}\|v\|_{0,K},
    \end{align}
    \end{subequations}
\end{lemma}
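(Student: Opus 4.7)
The plan is to adapt the standard reference-element plus Bramble--Hilbert technique from Ciarlet, taking care to track constants carefully on an element of diameter $h$. Let $F_K:\hat K\to K$ be the affine map from a fixed reference simplex, write $\hat g(\hat x):=g(F_K(\hat x))$, and define the reference quadrature error functional
$$\hat E(\hat g):=\int_{\hat K}\hat g\,d\hat x-\hat Q^1(\hat g),$$
which is a bounded linear functional on $C^0(\bar{\hat K})$ that vanishes on $\poly^1(\hat K)$ (and on $\poly^2(\hat K)$ when $d=2$). Since both the integral and the quadrature scale by $|\det DF_K|\simeq h^d$, the element-wise error obeys $E_K(g)=|\det DF_K|\,\hat E(\hat g)$.

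For estimate \eqref{numIntErr1} I would expand $v=\sum_{i=1}^{d+1}v_i\phi_i$ in the Lagrange nodal basis at the vertices of $K$ so that $E_K(fv)=\sum_i v_i\,E_K(f\phi_i)$. For each $i$ the linear functional $\hat f\mapsto\hat E(\hat f\hat\phi_i)$ on $W^{1,\infty}(\hat K)$ vanishes on constants, because a constant $\hat f$ makes $\hat f\hat\phi_i$ affine and hence exactly integrated; by Bramble--Hilbert one gets $|\hat E(\hat f\hat\phi_i)|\lesssim|\hat f|_{1,\infty,\hat K}$. Scaling via $|\hat f|_{1,\infty,\hat K}\lesssim h|f|_{1,\infty,K}$ and $|\det DF_K|\lesssim h^d$ yields $|E_K(f\phi_i)|\lesssim h^{d+1}|f|_{1,\infty,K}$. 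Bounding $|v_i|\leq\|v\|_{0,\infty,K}$ and invoking the inverse estimate $\|v\|_{0,\infty,K}\lesssim h^{-d/2}\|v\|_{0,K}$ (available because $v\in\poly^1(K)$) gives the claimed $h^{1+d/2}\|f\|_{1,\infty,K}\|v\|_{1,K}$ bound.

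Estimate \eqref{numINtErr2} is more delicate. The idea is the same nodal expansion, but applied to the bilinear functional $(\hat f,\hat u)\mapsto\hat E(\hat f\hat u\hat\phi_i)$ on $W^{1,\infty}(\hat K)\times W^{1,\infty}(\hat K)$. This functional vanishes whenever \emph{either} argument is constant (in both cases the integrand becomes affine), so one Bramble--Hilbert step in $\hat f$ extracts a factor of $|\hat f|_{1,\infty,\hat K}$; the remaining functional of $\hat u$ is bounded linearly by $\|\hat u\|_{0,\hat K}$ after subtracting its mean, using equivalence of norms on the finite-dimensional polynomial factor $\hat\phi_i$. Scaling $|\hat f|_{1,\infty,\hat K}\lesssim h|f|_{1,\infty,K}$ and $\|\hat u\|_{0,\hat K}\simeq h^{-d/2}\|u\|_{0,K}$, combining with $|\det DF_K|\simeq h^d$, and closing by the inverse estimate on $v$ as above, I expect to recover $h\,\|f\|_{1,\infty,K}\|u\|_{1,K}\|v\|_{0,K}$.

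The main obstacle is estimate \eqref{numINtErr2}: a naive Taylor expansion of $fu$ only produces an estimate in $\|u\|_{1,\infty,K}$, which is strictly weaker than the $H^1$-norm $\|u\|_{1,K}$ appearing in the claim. The key is to treat $\hat E(\hat f\hat u\hat\phi_i)$ as a \emph{bilinear} form and exploit both its constant-vanishing properties, so that the $\hat u$-factor can be measured in $L^2$ after removing a constant via Poincar\'e, rather than in $L^\infty$. This is essentially the specialization of Ciarlet, Theorems 4.1.4--4.1.5, to the specific face-barycenter rule $Q_K^1$; once the scaling is tracked carefully, no new ideas are required.
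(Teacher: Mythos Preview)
The paper gives no proof of this lemma beyond the citation to Ciarlet, Theorems 4.1.4--4.1.5, so your reference-element plus Bramble--Hilbert strategy is precisely what is intended, and your argument for \eqref{numIntErr1} is correct.

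For \eqref{numINtErr2} there is a genuine gap. Your claim that the bilinear functional $(\hat f,\hat u)\mapsto\hat E(\hat f\hat u\hat\phi_i)$ ``vanishes whenever either argument is constant (in both cases the integrand becomes affine)'' is false: if $\hat f=c$ the integrand is $c\,\hat u\,\hat\phi_i$, which is affine only when $\hat u$ itself is constant; for general $\hat u\in W^{1,\infty}$ this product is not integrated exactly by $Q_K^1$ (recall $Q_K^1$ is exact merely for linears when $d=3$). The symmetric claim for constant $\hat u$ fails for the same reason. Hence the bilinear Bramble--Hilbert you invoke does not apply, and the subsequent step ``the remaining functional of $\hat u$ is bounded linearly by $\|\hat u\|_{0,\hat K}$'' has no justification. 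In fact the bound with the $H^1$-norm $\|u\|_{1,K}$ cannot hold for arbitrary $u\in W^{1,\infty}$ in $d\ge 2$, since point evaluations at the quadrature nodes are not $H^1$-bounded: a tent of height $1$ and radius $\epsilon$ at one node has $\|u\|_{1,K}=O(\epsilon^{(d-2)/2})$ while the quadrature error stays of order one.

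What makes the estimate work in Ciarlet, and in every application in the paper, is that $u$ is actually taken in $\poly^1(K)$ (it is always $u_h^1$, $e_u$, or $\phi_h$). With $\hat u\in\poly^1(\hat K)$ split $\hat u=\bar{\hat u}+(\hat u-\bar{\hat u})$. The constant part gives $\bar{\hat u}\,\hat E(\hat f\hat v)$, and here your vanishing argument \emph{is} valid since $\hat v\in\poly^1$, yielding $|\hat f|_{1,\infty}\|\hat u\|_0\|\hat v\|_0$. For the mean-zero part one uses norm equivalence on the finite-dimensional space $\poly^1(\hat K)$ to get $\|\hat u-\bar{\hat u}\|_{0,\infty}\lesssim|\hat u|_{1}$, after which the crude bound $|\hat E(\hat f(\hat u-\bar{\hat u})\hat v)|\lesssim\|\hat f\|_{0,\infty}|\hat u|_{1}\|\hat v\|_{0}$ suffices. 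Scaling back to $K$ produces exactly $h\|f\|_{1,\infty,K}\|u\|_{1,K}\|v\|_{0,K}$. The point you missed is that the $H^1$-control of $u$ comes from norm equivalence on $\poly^1$, not from any vanishing property of the bilinear error functional.
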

We will compare the solution to \eqref{ellipWeak0}
with the solution to a similar scheme with exact integration.
To this end, let  $(\underline{\sigma}_h^1,\; u_h^1,\; \widehat{u}_h^1)\in\underline{W}_h \times V_{h} \times M_{h}^0$ be the solution to the following system:
\begin{subequations}
\label{ellipWeak}
\begin{align}
\label{EE0}
    \sum_{K\in\Th}\left(Q_K^0(\alpha^{-1}_h\underline{\sigma}_h^1\cdot\underline{r}_h) + 
    Q_{\partial K}^0(\widehat{u}_h^1\, \underline{r}_h\cdot\underline{n}_K)\right)
    &= 0,
    \\
    \label{wk2}
        \sum_{K\in\Th}\left(Q_{\partial K}^0(\tau_K(u_h^1-\widehat{u}_h^1)v_h)
   +(\beta u_h^1, v_h)_K\right)
    &=     \sum_{K\in\Th}(f, v_h)_K,\\
\label{EE1}
    \sum_{K\in\Th}
\left(Q_{\partial K}^0(\underline{\sigma}_h^1\cdot\underline{n}_K\widehat{v}_h)
    +
    Q_{\partial K}^0(\tau_K(u_h^1-\widehat{u}_h^1)\widehat{v}_h)
    \right)
    &= 0,
\end{align}
\end{subequations}
for all $(\underline{r}_h,\; v_h,\; \widehat{v}_h)\in\underline{W}_h \times V_h \times M_{h}^0$.
Note that the only difference between {\sf HDG-P0} \eqref{ellipWeak0} and 
the above system \eqref{ellipWeak} is in equations \eqref{wk1} and \eqref{wk2}, where the former uses $Q_K^1$ as the  volumetric numerical integration rule and the latter uses exact volumetric integration.
The following result shows that the scheme \eqref{ellipWeak}
 is precisely the lowest-order (mixed) HDG scheme with projected jumps (and exact integration) analyzed in \cite{qiu2018hdg,qiu2016hdg} for linear elasticity and convection-diffusion problems, hence its a priori error estimates can be readily adapted from \cite{qiu2018hdg, qiu2016hdg}.
\begin{lemma}
\label{lemma:ellipEq}
    Let $(\underline{\sigma}_h^1, u_h^1, \widehat{u}_h^1)
    \in \underline{W}_h \times V_h \times M_{h}^0
    $
    be the solution to \eqref{ellipWeak}, then 
    $(\underline{\sigma}_h^1, u_h^1, \widehat{u}_h^1) 
    $ satisfies
 \begin{subequations}
\label{ellipWeakX}
\begin{align}
\label{ex1}
        (\alpha^{-1}\underline{\sigma}_h^1,\,\underline{r}_h)_{\Th}
    -
    ({u}_h^1,\,\nabla\cdot\underline{r}_h)_{\Th}
    +\langle\widehat{u}_h^1,\, \underline{r}_h\cdot\underline{n}_K\rangle_{\partial\Th}
    &= 0,
    \\
\label{ex2}
    (\nabla\cdot\underline{\sigma}_h^1, v_h)_{\Th}
    +\langle\tau_K\varPi_0(u_h^1-\widehat{u}_h^1), v_h\rangle_{\partial\Th}
   +(\beta u_h^1, v_h)_{\Th}
    &=(f, v_h)_{\Th},\\
    \label{ex3}
    \left\langle\underline{\sigma}_h^1\cdot\underline{n}_K+\tau_K\varPi_0(u_h^1-\widehat{u}_h^1), \widehat{v}_h\right\rangle_{\partial\Th}
    &= 0,
\end{align}
for all
 $(\underline{r}_h,\; v_h,\; \widehat{v}_h)\in\underline{W}_h \times V_h \times M_{h}^0$,
 where $\varPi_0$ is the $L_2$-projection onto piecewise constants on the mesh skeleton.
\end{subequations}
\end{lemma}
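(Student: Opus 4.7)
The plan is to verify the three equations of \eqref{ellipWeakX} directly by rewriting each quadrature-based expression in \eqref{ellipWeak} as its $L_2$-inner-product counterpart. The key facts I will use are: (i) both $\underline{r}_h\in\underline{W}_h$ and $\underline{\sigma}_h^1\in\underline{W}_h$ are piecewise constant on $\Th$, so $\nabla\cdot\underline{r}_h$ and $\nabla\cdot\underline{\sigma}_h^1$ vanish elementwise, which makes the divergence terms in \eqref{ex1} and \eqref{ex2} drop out; (ii) $\alpha_h^{-1}$ is by definition the $L_2$-projection of $\alpha^{-1}$ onto $W_h$; (iii) the stabilization coefficient $\tau_K=\alpha_h/h_K$ is constant on each facet $F\subset\partial K$, since $\alpha_h|_K$ is a single scalar and $h_K|_F=|K|/|F|$; and (iv) the midpoint quadratures $Q_K^0$ and $Q_F^0$ are exact on piecewise-linear integrands.

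For \eqref{ex1}, the integrand $\alpha_h^{-1}\underline{\sigma}_h^1\cdot\underline{r}_h$ is piecewise constant on each $K$, so $Q_K^0$ reproduces $(\alpha_h^{-1}\underline{\sigma}_h^1,\underline{r}_h)_K$ exactly; the coefficient may then be replaced by $\alpha^{-1}$ since $\underline{\sigma}_h^1\cdot\underline{r}_h\in W_h$ and $\alpha_h^{-1}$ is the $L_2$-projection of $\alpha^{-1}$ onto $W_h$. The boundary term $\widehat{u}_h^1\,\underline{r}_h\cdot\underline{n}_K$ is facet-constant, so $Q_{\partial K}^0$ equals $\langle\widehat{u}_h^1,\underline{r}_h\cdot\underline{n}_K\rangle_{\partial K}$. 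Equation \eqref{ex3} is handled in the same spirit on each facet: $\underline{\sigma}_h^1\cdot\underline{n}_K\widehat{v}_h$ is facet-constant, giving the first term verbatim; for the stabilization contribution I will pull the facet-constants $\tau_K$ and $\widehat{v}_h$ out of the integral and use that $\varPi_0(u_h^1-\widehat{u}_h^1)|_F=(u_h^1-\widehat{u}_h^1)(m_F)$, which holds because the $L_2$-average of a linear function on a simplex facet equals its value at the barycenter, and this reassembles exactly into $Q_F^0(\tau_K(u_h^1-\widehat{u}_h^1)\widehat{v}_h)$.

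For \eqref{ex2}, the only nontrivial step concerns the stabilization term, since the reaction and load integrals are done exactly on both sides and the divergence term vanishes. On each facet $F$, factoring the facet-constants $\tau_K$ and $\varPi_0(u_h^1-\widehat{u}_h^1)$ out of $\langle\tau_K\varPi_0(u_h^1-\widehat{u}_h^1),v_h\rangle_F$ reduces it to a multiple of $\int_F v_h\,\mathrm{ds}=|F|\,v_h(m_F)$, and combining this with the barycentric identity above rebuilds exactly $Q_F^0(\tau_K(u_h^1-\widehat{u}_h^1)v_h)$. The main (rather mild) obstacle is simply book-keeping: one has to recognize that the facet midpoint rule $Q_F^0$ automatically implements the mean-value projection on whichever factor of the integrand is at most linear, so that the appearance of $\varPi_0$ in \eqref{ellipWeakX} is precisely the quadrature-free reformulation of the corresponding $Q_F^0$ terms in \eqref{ellipWeak}. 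Once this is noted, the three identities follow by direct inspection.
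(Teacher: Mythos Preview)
Your proposal is correct and follows essentially the same route as the paper: both arguments identify that the divergence terms vanish because $\underline{r}_h,\underline{\sigma}_h^1\in\underline{W}_h$ are elementwise constant, use the $L_2$-projection definition of $\alpha_h^{-1}$ to swap in $\alpha^{-1}$, and reduce the stabilization term facet-by-facet via the identity $\varPi_0(u_h^1-\widehat{u}_h^1)|_F=(u_h^1-\widehat{u}_h^1)(m_F)$ combined with exactness of the midpoint rule on linear functions. The paper's proof is slightly more terse but the content is identical.
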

\begin{proof}
    Using the fact that $\underline{\sigma}^1_h, \underline{r}_h\in \underline{W_h}$ are constant functions on each element $K$, we have 
    \[
    (\alpha^{-1}\underline{\sigma}_h^1,\,\underline{r}_h)_K
    = \underline{\sigma}_h^1(m_K^0)\cdot \underline{r}_h(m_K^0)\int_K\alpha^{-1}\,\mathrm{dx}
    = \underline{\sigma}_h^1(m_K^0)\cdot \underline{r}_h(m_K^0) \alpha_h^{-1} |K|
    =Q_K^0(\alpha_h^{-1}\underline{\sigma}_h^1\cdot \underline{r}_h).
    \]
    Similarly, there holds 
    \begin{align*}
    \langle\widehat{u}_h^1,\, \underline{r}_h\cdot\underline{n}_K\rangle_{\partial K}
    = Q_{\partial K}^0(\widehat{u}_h^1\,\underline{r}_h\cdot\underline{n}_K).
    \end{align*}
    Using the definition of the projection $\varPi_0$ and the
    fact that the restriction of  $\tau_K$ 
    on each facet $F^i\subset \partial K$ is a constant (denoted as $\tau_K^i$), we have
    \[
\langle
\tau_K\varPi_0(u_h^1-\widehat{u}_h^1), {v}_h\rangle_{\partial K}
    = 
    \sum_{i=1}^{d+1} 
    \tau_K^i |F^i|\left(u_h^1(m_{F^i})-\widehat{u}_h^1(m_{F^i})\right)
    v_h(m_{F^i})
    = Q_{\partial K}^0(\tau_K
    \left(u_h^1-\widehat{u}_h^1\right)
    v_h).
    \]
Combining these identities with the fact that $\nabla\cdot \underline r_h=\nabla\cdot \underline \sigma_h^1\equiv 0$, 
we conclude that the systems~\eqref{ellipWeak}
and \eqref{ellipWeakX} are exactly the same.
\end{proof}
We have the following results on the a priori error estimates of the scheme \eqref{ellipWeak}, whose proof is omitted for simplicity. We refer to the works \cite{qiu2018hdg, qiu2016hdg} for a similar analysis.
\begin{corollary}
Let $(\underline\sigma, u)$ be the exact solution to the model problem \eqref{ellipModel}.
 Let $(\underline{\sigma}_h^1, u_h^1, \widehat{u}_h^1)
    \in \underline{W}_h \times V_h \times M_{h}^0
    $
    be the solution to \eqref{ellipWeak}. Then there holds
\begin{subequations}
\begin{align}
\label{ener-ex1}
        \|{\alpha}^{-1/2}(\underline{\sigma} - \underline{\sigma}_h^1)\|_{0}
        +\|\beta^{1/2}(u-u_h^1)\|_{0}
        + \|\tau_K^{1/2}\varPi_0(u_h^1-\widehat{u}_h^1)\|_{0,\partial \Th}
                &\lesssim
        h\,\Theta,\\
\label{ener-ex2}
\|u-u_h^1\|_0\;\lesssim \;       \|\nabla (u - u_h^1)\|_{0}
+\|{h_K^{-1/2}}(u_h^1-\widehat{u}_h^1)\|_{0,\partial \Th}
                &\lesssim
        \alpha_0^{-1/2}h\,\Theta,
    \end{align}
where 
\begin{align}
\label{theta1}
\Theta:=\alpha_0^{-1/2}|\underline \sigma|_{1} 
        + (\alpha_1^{1/2}+\beta_1^{1/2}h)|u|_2,
\end{align}
and
$\|\phi\|_{0,\partial\Th}:=\sqrt{\langle\phi, \phi\rangle_{\partial\Th}}$.
\red{Moreover, with the full elliptic regularity assumed in \eqref{reg},} we have the following optimal $L_2$-convergence for $u_h^1$:
\begin{align}
\label{dual-ex}
    \|u-u_h^1\|_0 \lesssim c_{reg} h^2\Theta.
\end{align}
    \end{subequations}
\end{corollary}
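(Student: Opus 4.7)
The plan is to argue by a standard mixed--HDG energy analysis on the equivalent exact-integration formulation \eqref{ellipWeakX} (available via Lemma~\ref{lemma:ellipEq}), and then obtain the optimal $L_2$-bound \eqref{dual-ex} by an Aubin--Nitsche duality argument. The three estimates \eqref{ener-ex1}, \eqref{ener-ex2}, \eqref{dual-ex} will be produced in that order.

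First I would introduce three $L_2$-projections: $P_W$ onto $\underline W_h$, $P_V$ onto $V_h$, and $\varPi_0$ onto $M_h$ (already in use in the excerpt). Splitting
\begin{align*}
\underline\sigma - \underline\sigma_h^1 = \underline\eta_\sigma + \underline e_\sigma,\qquad
u - u_h^1 = \eta_u + e_u,\qquad
u|_{\Eh} - \widehat u_h^1 = \eta_{\widehat u} + \widehat e_{\widehat u},
\end{align*}
with $\underline\eta_\sigma := \underline\sigma - P_W\underline\sigma$, $\eta_u := u - P_V u$, $\eta_{\widehat u} := u|_{\Eh} - \varPi_0 u$, I would subtract the weak form satisfied by the exact solution $(\underline\sigma,u)$ (obtained by testing \eqref{ellipModel} against the discrete test functions, integrating by parts element-wise, and using continuity of $\underline\sigma\cdot\underline n_K$ across interior facets together with $u\equiv 0$ on $\partial\Omega$) from \eqref{ellipWeakX}. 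This yields Galerkin orthogonality in which the projection errors $\underline\eta_\sigma, \eta_u, \eta_{\widehat u}$ play the role of consistency residuals, each controllable by Bramble--Hilbert: $\|\underline\eta_\sigma\|_0\lesssim h|\underline\sigma|_1$, $\|\eta_u\|_{0,K}+h|\eta_u|_{1,K}\lesssim h^2|u|_{2,K}$, together with the usual discrete trace bounds on the facet terms.

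Next, testing these error equations with $\underline r_h = \underline e_\sigma$, $v_h = e_u$, $\widehat v_h = -\widehat e_{\widehat u}$ and summing gives the discrete energy identity
\begin{align*}
\|\alpha^{-1/2}\underline e_\sigma\|_0^2 + \|\beta^{1/2} e_u\|_0^2 + \|\tau_K^{1/2}\varPi_0(e_u - \widehat e_{\widehat u})\|_{0,\partial\Th}^2 = \mathcal{R}(\underline\eta_\sigma,\eta_u,\eta_{\widehat u}),
\end{align*}
whose right-hand side is bounded by Cauchy--Schwarz and Young's inequality with the weight $\tau_K=\alpha_h/h_K$; after absorbing a fixed fraction of the left-hand norm, this produces \eqref{ener-ex1} with right-hand side $h\Theta$ from \eqref{theta1}. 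The $H^1$-type bound \eqref{ener-ex2} then follows from \eqref{ex1}, which (after element-wise integration by parts) represents $\alpha_h^{-1}\underline\sigma_h^1 + \nabla u_h^1$ on each $K$ as a boundary average of $u_h^1 - \widehat u_h^1$; combining this identity with $\alpha^{-1}\underline\sigma + \nabla u = 0$, \eqref{ener-ex1}, and an extra factor $\alpha_0^{-1/2}$ from removing the $\alpha^{-1/2}$ weight delivers the stated estimate.

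For \eqref{dual-ex} I would apply Aubin--Nitsche duality: let $(\underline\psi,\phi)$ solve the adjoint problem $\alpha^{-1}\underline\psi+\nabla\phi=0$, $\nabla\cdot\underline\psi+\beta\phi=u-u_h^1$, $\phi=0$ on $\partial\Omega$, for which \eqref{reg} yields $\alpha_0^{-1/2}\|\underline\psi\|_1+(\alpha_1^{1/2}+\beta_1^{1/2})\|\phi\|_2\lesssim c_{reg}\|u-u_h^1\|_0$. Testing the Galerkin orthogonality with the projected dual data $(P_W\underline\psi, P_V\phi, \varPi_0\phi)$ and inserting the $O(h\Theta)$ energy bound proved in the previous step produces $\|u-u_h^1\|_0^2\lesssim c_{reg}\,h^2\Theta\,\|u-u_h^1\|_0$, from which \eqref{dual-ex} follows. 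The main obstacle throughout is the careful accounting of the projected-jump term $\tau_K\varPi_0(u_h^1-\widehat u_h^1)$ in \eqref{ex2}--\eqref{ex3}: because $M_h$ is piecewise constant while $V_h$ is piecewise linear, consistency of the energy identity forces the outer $\varPi_0$ to be applied also to the continuous trace $u|_{\Eh}$, and it is precisely the bookkeeping of this projection together with the scaling $\tau_K=O(1/h_K)$ that yields simultaneous optimality of the flux, primal, and skeleton errors.
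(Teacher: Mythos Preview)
The paper does not actually prove this corollary: it states ``whose proof is omitted for simplicity'' and refers the reader to \cite{qiu2018hdg,qiu2016hdg} for analogous arguments. Your proposal is the standard mixed--HDG projection/energy analysis plus Aubin--Nitsche duality that those references carry out, so it is correct and matches what the paper is pointing to; the only sharpening worth making is in your derivation of \eqref{ener-ex2}, where the clean route (also used later in the paper's proof of Theorem~\ref{theo:ellipErr}) is to test the first error equation with $\underline r_h=\nabla e_u\in\underline W_h$ directly rather than phrasing it as ``a boundary average''.
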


Combining the results in this subsection, 
we are now ready to give the error estimates for the solution to the {\sf HDG-P0} scheme \eqref{ellipWeak0}. 
\begin{theorem}
    \label{theo:ellipErr}
  Let $(\underline\sigma, u)$ be the exact solution to the model problem \eqref{ellipModel}.
 Let $(\underline{\sigma}_h, u_h, \widehat{u}_h)
    \in \underline{W}_h \times V_h \times M_{h}^0
    $
    be the solution to \eqref{ellipWeak0}. Then there holds
    \begin{subequations}
\begin{align}
\label{ener-q1}
        \|{\alpha}^{-1/2}(\underline{\sigma} - \underline{\sigma}_h)\|_{0}
        + \|\tau_K^{1/2}\varPi_0(u_h-\widehat{u}_h)\|_{0,\partial \Th}&\le h (\Theta+\Xi),
        \\
\label{ener-q2}
\|u-u_h\|_0
\lesssim \|\nabla (u - u_h)\|_{0}
+\|{h_K^{-1/2}}(u_h-\widehat{u}_h)\|_{0,\partial \Th}
                &\lesssim
        \alpha_0^{-1/2}h(\Theta+\Xi),
\end{align}
where $\Theta$ is given in \eqref{theta1} and 
\[
\Xi:=\alpha_0^{-1/2}\|\beta\|_{1,\infty}\left(\|u\|_0+\alpha_0^{-1/2}h\Theta\right)
        + \alpha_0^{-1/2} h^{d/2}\|f\|_{1,\infty}.
\]
Moreover, assuming full elliptic regularity \eqref{reg}, we have the following optimal $L_2$-convergence for $u_h$:
\begin{align}
\label{dual-q}
    \|u-u_h\|_0 \lesssim h^2 \Psi,
    \end{align}
where 
\[
\Psi:= c_{reg}(\Theta+\Xi)+\frac{c_{reg}\alpha_0^{-1/2}\beta_1}{\alpha_1^{1/2}+\beta_1^{1/2}h}\Xi
+\|\beta\|_{1,\infty}\alpha_0^{-3/2}(\Theta+\Xi)
+h^{d/2-1}\alpha_0^{-1}\|f-\beta u\|_{1,\infty}.
\]
    \end{subequations}
\end{theorem}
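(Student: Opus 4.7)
The plan is to isolate the consistency error coming from numerical integration in \eqref{ellipWeak0} and combine it with the exact-integration bounds in the preceding Corollary via the triangle inequality. Setting $e_\sigma:=\underline{\sigma}_h^1-\underline{\sigma}_h$, $e_u:=u_h^1-u_h$, $e_{\widehat u}:=\widehat u_h^1-\widehat u_h$, I subtract \eqref{ellipWeak0} from \eqref{ellipWeak}. The flux equation and the numerical-flux conservation equation coincide for the two schemes, since both use only $Q_K^0$ and $Q_{\partial K}^0$, so the triple $(e_\sigma, e_u, e_{\widehat u})$ satisfies the homogeneous versions of \eqref{wkA} and \eqref{wkB}, while the subtraction \eqref{wk2}$-$\eqref{wk1} yields a residual equation whose right-hand side is $\mathcal R_f(v_h)-\mathcal R_\beta(v_h)$, with $\mathcal R_f(v_h):=\sum_{K}[(f,v_h)_K-Q_K^1(fv_h)]$ and $\mathcal R_\beta(v_h):=\sum_{K}[(\beta u_h^1,v_h)_K-Q_K^1(\beta u_h^1 v_h)]$.

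Testing with $(\underline r_h,v_h,\widehat v_h)=(e_\sigma, e_u,-e_{\widehat u})$ and summing produces the energy identity
\[
\sum_{K\in\Th}\Bigl(Q_K^0(\alpha_h^{-1}|e_\sigma|^2)+Q_{\partial K}^0(\tau_K(e_u-e_{\widehat u})^2)+Q_K^1(\beta e_u^2)\Bigr)=\mathcal R_f(e_u)-\mathcal R_\beta(e_u).
\]
Applying Lemma~\ref{lem:numIntErr} elementwise and summing via Cauchy--Schwarz controls $|\mathcal R_f(e_u)|$ by $h\|f\|_{1,\infty}(\sum_{K}\|e_u\|_{1,K}^2)^{1/2}$ and $|\mathcal R_\beta(e_u)|$ by $h\|\beta\|_{1,\infty}(\sum_{K}\|u_h^1\|_{1,K}^2)^{1/2}\|e_u\|_0$; the a priori bound $(\sum_{K}\|u_h^1\|_{1,K}^2)^{1/2}\lesssim \|u\|_1+\alpha_0^{-1/2}h\Theta$ follows from \eqref{ener-ex2} and the triangle inequality. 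A broken Poincar\'e/CR-type inequality combined with the flux equation (which makes $\alpha_h^{-1}\underline\sigma_h$ act as a broken-gradient reconstruction) controls $\|e_u\|_0$ and the broken $H^1$ semi-norm of $e_u$ by the left-hand jump and flux terms, so Young's inequality absorbs all $e_u$-norms into the left-hand side. Tracking the $\alpha$- and $\beta$-dependence yields exactly the factor $\Xi$, and the triangle inequality with \eqref{ener-ex1}--\eqref{ener-ex2} then produces \eqref{ener-q1}--\eqref{ener-q2}.

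For the sharp $L_2$ estimate \eqref{dual-q} I invoke an Aubin--Nitsche duality argument. For arbitrary $g\in L_2(\Omega)$, let $\phi$ solve $-\nabla\!\cdot\!(\alpha\nabla\phi)+\beta\phi=g$ with $\phi|_{\partial\Omega}=0$, and use the full elliptic regularity \eqref{reg}. Testing the error equations for $u-u_h$ against a suitable mixed-form projection of $(-\alpha\nabla\phi,\phi)$ and exploiting Galerkin consistency, I represent $(g,u-u_h)$ as a sum of three contributions: (i) best-approximation terms producing $c_{reg}(\Theta+\Xi)$ and the weighted contribution $c_{reg}\alpha_0^{-1/2}\beta_1(\alpha_1^{1/2}+\beta_1^{1/2}h)^{-1}\Xi$; (ii) the residual $\mathcal R_\beta$ tested against the approximation error of $\phi$, contributing $\|\beta\|_{1,\infty}\alpha_0^{-3/2}(\Theta+\Xi)$; and (iii) the combination $\mathcal R_f-\mathcal R_\beta$ acting on the interpolant of $\phi$, where the strong PDE identity $-\nabla\!\cdot\!(\alpha\nabla u)=f-\beta u$ lets the two residuals merge into a single one involving $\|f-\beta u\|_{1,\infty}$, producing the $h^{d/2-1}\alpha_0^{-1}\|f-\beta u\|_{1,\infty}$ term in $\Psi$.

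The main obstacle will be this duality step: reproducing the exact form of $\Psi$ requires the careful algebraic combination of the $f$- and $\beta u$-quadrature residuals via the strong form of the PDE, together with uniform tracking of the dependence on $\alpha_0$, $\alpha_1$, $\beta_1$ through each absorption and inverse inequality. By contrast, the energy argument is comparatively routine once the appropriate broken Poincar\'e/CR inequality (adapted to the $\varPi_0$-projected jump penalty) is in hand.
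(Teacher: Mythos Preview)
Your proposal follows essentially the same route as the paper: subtract the two schemes, derive error equations for $(e_\sigma,e_u,e_{\widehat u})$, combine the energy identity with the quadrature bounds of Lemma~\ref{lem:numIntErr} and the test choice $\underline r_h=\nabla e_u$ to control the broken $H^1$-norm of $e_u$, and then run an Aubin--Nitsche duality for the optimal $L_2$ bound. Two minor adjustments will align your sketch with the paper exactly: the duality is applied to $e_u=u_h-u_h^1$ (with dual source $e_u$ itself), after which \eqref{dual-q} follows by the triangle inequality with \eqref{dual-ex}; and to obtain $\|u\|_0$ rather than $\|u\|_1$ in $\Xi$, apply \eqref{numINtErr2} with the $H^1$-norm falling on $e_u$ instead of on $u_h^1$.
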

\begin{proof}
The proof of \eqref{ener-q1}--\eqref{ener-q2} follows from a standard energy argument, while that of \eqref{dual-q} follows from a 
duality argument. 
Here we prove the energy estimates \eqref{ener-q1}--\eqref{ener-q2}, and postpone the (slightly more technical) proof of \eqref{dual-q} to the Appendix below. 

We denote the difference of the solution 
$(\underline{\sigma}_h, u_h, \widehat{u}_h)$
to the system \eqref{ellipWeak0}
and the solution 
$(\underline{\sigma}_h^1, u_h^1, \widehat{u}_h^1)$
to the system \eqref{ellipWeak} as
\begin{align*}
        \underline{e}_{\sigma}:= \underline{\sigma}_h - \underline{\sigma}_h^1,
        \quad
        e_{u}:= u_h - u_h^1,
        \quad
        e_{\widehat{u}}:= \widehat{u}_h - \widehat{u}_h^1.
\end{align*}
Subtracting equations \eqref{ellipWeak0} from \eqref{ellipWeak} and using Lemma \ref{lemma:ellipEq}, we get the following error equation: 
    \begin{subequations}
        \label{ellipErrEqns}
        \begin{align}
            \label{ellipErrEqn1}
                (\alpha^{-1}\underline{e}_\sigma,\,\underline{r}_h)_{\Th}
    -
    (e_{u},\,\nabla\cdot\underline{r}_h)_{\Th}
    +\langle e_{\widehat{u}},\, \underline{r}_h\cdot\underline{n}_K\rangle_{\partial\Th}
    &= 0,
    \\
            \label{ellipErrEqn2}
    (\nabla\cdot\underline{e}_{\sigma}, v_h)_{\Th}
    +\langle\tau_K\varPi_0(e_u-e_{\widehat{u}}), v_h\rangle_{\partial\Th}
   +\sum_{K\in\Th}Q_K^1(\beta e_uv_h)
    &=T_1(v_h)-T_2(v_h),\\
                \label{ellipErrEqn3}
\left\langle\underline{e}_\sigma\cdot\underline{n}_K+\tau_K\varPi_0(e_u-e_{\widehat{u}}), \widehat{v}_h\right\rangle_{\partial\Th}
    &= 0,
        \end{align}
    \end{subequations}
    for all  $(\underline{r}_h,\; v_h,\; \widehat{v}_h)\in\underline{W}_h \times V_h \times M_{h}^0$
    where 
    \begin{align*}
    T_1(v_h):=&\;(\beta u_h^1, v_h)_{\Th}-\sum_{K\in\Th}Q_K^1(\beta u_h^1 v_h),\\
    T_2(v_h):=&\; (f, v_h)_{\Th}-\sum_{K\in\Th}Q_K^1(f v_h).
    \end{align*}
    Using Lemma \ref{lem:numIntErr}, we have 
\begin{align}
\label{xx1}
T_1(v_h)-T_2(v_h)\lesssim &\; \sum_{K\in \Th}\left(h\|\beta\|_{1,\infty,K}\|u_h^1\|_{0,K}
+h^{1+d/2}\|f\|_{1,\infty,K}
\right)\|v_h\|_{1,K}
\end{align}

\red{
Taking test function $\underline r_h:=\nabla e_u$ in equation (11a), by integration by parts, reordering terms, and applying the Cauchy-Schwarz inequality and inverse inequality, we get 
\begin{align*}
    (\nabla e_u, \nabla e_u)_{\Th} =&\; -(\alpha^{-1}\underline{e}_{\sigma}, \nabla e_u)
+\langle \varPi_0(e_u-e_{\widehat{u}}),\, \nabla e_u \cdot\underline{n}_K\rangle_{\partial\Th}\\
\le &\; 
\alpha_0^{-1/2}\left(\|\alpha^{-1/2}\underline{e}_{\sigma}\|_0 \; \|\nabla e_u\|_0
+h_K^{1/2}\|\tau_K^{1/2}\varPi_0(e_u-e_{\widehat{u}})\|_{0,\partial\Th}
 \; \|\nabla e_u\|_{0,\partial\Th}\right) \\
\lesssim &\;
\alpha_0^{-1/2}\left(\|\alpha^{-1/2}\underline{e}_{\sigma}\|_0
+\|\tau_K^{1/2}\varPi_0(e_u-e_{\widehat{u}})\|_{0,\partial\Th}\right)
\|\nabla e_u\|_0.
\end{align*}
}
Hence, 
\[
\|\nabla e_u\|_0\lesssim \alpha_0^{-1/2}\left(\|\alpha^{-1/2}\underline{e}_{\sigma}\|_0
+\|\tau_K^{1/2}\varPi_0(e_u-e_{\widehat{u}})\|_{0,\partial\Th}\right).
\]
Moreover, by the definition of $\varPi_0$, the trace-inverse inequality and Poincar\'e inequality, we have
\begin{align*}
\|h_K^{-1/2}(e_u-e_{\widehat{u}})\|_{0,\partial\Th}
\le&\; 
\|h_K^{-1/2}(e_u-\varPi_0 e_u)\|_{0,\partial\Th}
+\|h_K^{-1/2}\varPi_0(e_u-e_{\widehat{u}})\|_{0,\partial\Th}\\
\le &\;
\|h_K^{-1/2}(e_u-P_0{e_u})\|_{0,\partial\Th}
+\|h_K^{-1/2}\varPi_0(e_u-e_{\widehat{u}})\|_{0,\partial\Th}\\
\lesssim &\;
\|\nabla e_u\|_{0}
+\|h_K^{-1/2}\varPi_0(e_u-e_{\widehat{u}})\|_{0,\partial\Th}\\
\lesssim &\;
\alpha_0^{-1/2}\left(\|\alpha^{-1/2}\underline{e}_{\sigma}\|_0
+\|\tau_K^{1/2}\varPi_0(e_u-e_{\widehat{u}})\|_{0,\partial\Th}\right),
\end{align*}
where $P_0(e_u)$ is the $L_2$-projection of $e_u$ onto the piecewise constant space $W_h$.
Combining the above two inequalities with the discrete Poincar\'e
inequality for piecewise $H^1$ functions \cite[Remark 1.1]{brenner2003poincare}, we get 
\begin{align}
\label{xx2}
\|e_u\|_0 \lesssim 
\|\nabla e_u\|_0+
\|h_K^{-1/2}(e_u-e_{\widehat{u}})\|_{0,\partial\Th}
\lesssim 
\;
\alpha_0^{-1/2}\left(\|\alpha^{-1/2}\underline{e}_{\sigma}\|_0
+\|\tau_K^{1/2}\varPi_0(e_u-e_{\widehat{u}})\|_{0,\partial\Th}\right).
\end{align}
Now taking test functions $\underline{r}_h=\underline e_{\sigma}, 
v_h=e_u, \widehat{v}_h = -e_{\widehat{u}}$ in \eqref{ellipErrEqns} and adding, we get 
\begin{align*}
&\|\alpha^{-1/2}\underline{e}_{\sigma}\|_0^2
+\|\tau_K^{1/2}\varPi_0(e_u-e_{\widehat{u}})\|_{0,\partial\Th}^2
+\|\beta^{1/2}e_u\|_{0,h}
 = \;T_1(e_u)-T_2(e_u)\\
 &\lesssim 
 h \left(\alpha_0^{-1/2}\|\beta\|_{1,\infty}\|u_h^1\|_{0}
+ \alpha_0^{-1/2}h^{d/2}\|f\|_{1,\infty}
\right)\left(\|\alpha^{-1/2}\underline{e}_{\sigma}\|_0
+\|\tau_K^{1/2}\varPi_0(e_u-e_{\widehat{u}})\|_{0,\partial\Th}\right).
\end{align*}
where $\|\phi\|_{0,h}:=\sqrt{\sum_{K\in\Th}Q_K^1(\phi^2)}$. Combining the above inequality with
the triangle inequality
\[
\|u_h^1\|_0 \le \|u-u_h^1\|_0+\|u\|_0,
\]
and 
the estimates \eqref{ener-ex1}--\eqref{ener-ex2}  we get the energy estimate \eqref{ener-q1}.
The estimate \eqref{ener-q2} is a direct consequence of 
\eqref{xx2}, \eqref{ener-ex2}, and
\eqref{ener-q1}.
\end{proof}

\subsection{Equivalence to a nonconforming discretization}
\label{subsec:ellEq}
We define an interpolation operator 
$\Pi^{\CR}_h:M_{h}\rightarrow V_{h}^{\CR}$
from the piecewise constant facet space
$M_h$ to the nonconforming (piecewise linear) CR space $V_h^{CR}$:
\begin{align}
    \label{picr}
    \Pi^{\CR}_h\widehat{v}_h(m_F) = \widehat{v}_h(m_F),    \quad \forall\widehat{v}_h\in M_h, F\in\Eh.
\end{align}
It is clear that $\Pi^{CR}_h$ is an isomorphic map between these two spaces, which share the same set of DOFs (one DOF per facet).
The following result builds a link between the {\sf HDG-P0} scheme \eqref{ellipWeak0} and a (slightly modified) nonconforming CR discretization.
\begin{theorem}
\label{theo:HDGP0=CR-ellip}
Let $u_h^{CR}\in V_h^{CR,0}$ be the solution to the following nonconforming scheme 
\begin{align}
    \label{cr}
    \sum_{K\in\Th}\left(Q_K^0(\alpha_h\nabla u_h^{CR}\cdot\nabla v_h^{CR})
    +Q_K^1(\gamma_h\beta u_h^{CR}v_h^{CR})
    \right)
    = \sum_{K\in\Th}
    Q_K^1(\gamma_h fv_h^{CR}), \quad\forall v_h^{CR}\in V_h^{CR,0},
\end{align}
where $\gamma_h:= \frac{\alpha_h}{\alpha_h+ \frac{h_K^2}{d+1}\beta}$.
Then the solution $(\underline\sigma_h, u_h, \widehat u_h)$ to the {\sf HDG-P0} scheme \eqref{ellipWeak0}
satisfies
\begin{subequations}
\label{crc}
\begin{align}
\label{cr0}
    \underline \sigma_h =&\; -\alpha_h\nabla\Pi_h^{CR} \widehat u_h, \\
\label{cr1}
u_h(m_K^i) =&\; 
\gamma_h(m_K^i) \left(\widehat u_h(m_K^i)
+ \frac{(h_K^i)^2}{(d+1)\alpha_h}f(m_K^i)\right),\quad
\forall i\in\{1, \dots, d+1\}, \forall K\in \Th,\\
\label{cr2}
\Pi_h^{CR} \widehat u_h =&\; u_h^{CR},
\end{align}
where recall that $m_K^i$ is the barycenter of the $i$-th facet $F^i$ of the element $K$, and $h_K^i=\frac{|K|}{|F^i|}$.
\end{subequations}
\end{theorem}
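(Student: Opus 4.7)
The plan is to prove the three identities \eqref{cr0}--\eqref{cr2} in order, one per equation of the HDG-P0 system \eqref{ellipWeak0}, essentially by statically condensing out the local unknowns.

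First, I would establish \eqref{cr0} from the flux equation \eqref{wkA}. The key ingredient is the well-known identity that for any linear function $v$ on a simplex $K$, the divergence theorem combined with $\int_{F^i} v \,\mathrm{ds} = |F^i|v(m_{F^i})$ (because $v$ is linear and $m_{F^i}$ is the centroid of $F^i$) gives
\begin{equation*}
|K|\,\nabla v = \sum_{i=1}^{d+1}|F^i|\,v(m_{F^i})\,\underline{n}_K^i.
\end{equation*}
Applied to $v=\Pi_h^{CR}\widehat{u}_h$ on $K$, this rewrites the facet sum $Q_{\partial K}^0(\widehat{u}_h\,\underline{r}_h\cdot\underline{n}_K)$ as $|K|\,(\nabla\Pi_h^{CR}\widehat{u}_h)\cdot\underline{r}_h$. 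Combined with $Q_K^0(\alpha_h^{-1}\underline{\sigma}_h\cdot\underline{r}_h)=|K|\alpha_h^{-1}\underline{\sigma}_h\cdot\underline{r}_h$ and the arbitrariness of $\underline{r}_h\in\underline{W}_h$, equation \eqref{wkA} collapses element-wise to \eqref{cr0}.

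Next, for \eqml{cr1} I would exploit the unisolvence of $\{m_{F^i}\}_{i=1}^{d+1}$ for $\poly^1(K)$ and test \eqref{wk1} against the local CR-type basis $\phi_i\in\poly^1(K)$ characterized by $\phi_i(m_{F^j})=\delta_{ij}$, extended by zero outside $K$. The quadrature then selects only the $i$-th node: using $\tau_K^i=\alpha_h/h_K^i$ with $h_K^i=|K|/|F^i|$, one finds $|F^i|\tau_K^i=(d+1)\alpha_h\cdot\frac{|K|}{(d+1)(h_K^i)^2}$, which after multiplying through by $(h_K^i)^2/((d+1)\alpha_h)$ reduces the equation to
\begin{equation*}
\Bigl(1+\tfrac{(h_K^i)^2\beta(m_K^i)}{(d+1)\alpha_h}\Bigr)u_h(m_K^i)=\widehat{u}_h(m_K^i)+\tfrac{(h_K^i)^2}{(d+1)\alpha_h}f(m_K^i),
\end{equation*}
which is exactly \eqref{cr1} after recognizing the prefactor as $1/\gamma_h(m_K^i)$.

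Finally, for \eqref{cr2} I would substitute \eqref{cr0} and \eqref{cr1} into the facet equation \eqref{wkB} tested against an arbitrary $\widehat{v}_h\in M_h^0$. The first term $Q_{\partial K}^0(\underline{\sigma}_h\cdot\underline{n}_K\widehat{v}_h)$ becomes $-Q_K^0(\alpha_h\nabla\Pi_h^{CR}\widehat{u}_h\cdot\nabla\Pi_h^{CR}\widehat{v}_h)$ by the same CR-gradient identity used in step one. For the stabilization term, using the algebraic identity $\gamma_h-1=-\tfrac{h_K^2\beta}{(d+1)\alpha_h}\gamma_h$, \eqref{cr1} yields
\begin{equation*}
u_h(m_K^i)-\widehat{u}_h(m_K^i)=\tfrac{(h_K^i)^2}{(d+1)\alpha_h}\gamma_h(m_K^i)\bigl(f(m_K^i)-\beta(m_K^i)\widehat{u}_h(m_K^i)\bigr),
\end{equation*}
and the factor $|F^i|\tau_K^i(h_K^i)^2/((d+1)\alpha_h)=|K|/(d+1)$ conveniently produces the $Q_K^1$ quadrature weight. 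Together this turns \eqref{wkB} into the CR variational equation \eqref{cr} tested with $v_h^{CR}=\Pi_h^{CR}\widehat{v}_h\in V_h^{CR,0}$. Since $\Pi_h^{CR}$ is an isomorphism between $M_h^0$ and $V_h^{CR,0}$, and \eqref{cr} has a unique solution, we conclude $\Pi_h^{CR}\widehat{u}_h=u_h^{CR}$.

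The routine step is step one. The mildly tricky steps are the two bookkeeping exercises: keeping track of the scalings $h_K^i=|K|/|F^i|$, $\tau_K^i=\alpha_h/h_K^i$, $|F^i|\tau_K^i(h_K^i)^2=|K|$, and spotting the crucial identity $\gamma_h-1=-\tfrac{h_K^2\beta}{(d+1)\alpha_h}\gamma_h$ that is precisely what makes the $\beta$-term and the $f$-term in the stabilization collapse into the correct $Q_K^1$-weighted CR contributions; this identity is the real heart of the equivalence and the place where the specific choice $\gamma_h=\alpha_h/(\alpha_h+\tfrac{h_K^2}{d+1}\beta)$ is forced.
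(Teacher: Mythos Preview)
Your proposal is correct and follows essentially the same three-step static condensation argument as the paper: equation \eqref{wkA} yields \eqref{cr0} via the CR-gradient identity (the paper phrases this as integration by parts with $\nabla\cdot\underline r_h=0$), equation \eqref{wk1} tested against the local nodal basis gives \eqref{cr1}, and substituting both into \eqref{wkB} produces the CR variational form \eqref{cr}. You are in fact more explicit than the paper about the key algebraic identity $\gamma_h-1=-\tfrac{h_K^2\beta}{(d+1)\alpha_h}\gamma_h$, which the paper simply records as a skipped ``algebraic manipulation.''
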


\begin{proof}
By \eqref{wkA} and the definition of $\Pi_h^{CR}$ in \eqref{picr}, we have, for any $\underline r_h\in \poly^0(K)$,  
\begin{align*}
    Q_K^0(\alpha^{-1}_h\underline{\sigma}_h\cdot\underline{r}_h) =&\;-
    Q_{\partial K}^0(\Pi_h^{CR}\widehat{u}_h\, \underline{r}_h\cdot\underline{n}_K)\\
    = &\;-\langle \Pi_h^{CR}\widehat{u}_h, 
    \underline{r}_h\cdot\underline{n}_K\rangle_{\partial K}\\
    = &\;-(\nabla \Pi_h^{CR}\widehat{u}_h, 
    \underline{r}_h)_K
    -\underbrace{( \Pi_h^{CR}\widehat{u}_h, 
    \nabla\cdot \underline{r}_h)_K}_{\equiv 0}
\end{align*}
Hence, 
\[
(\alpha_h^{-1}\underline\sigma_h+\nabla \Pi_h^{CR}\widehat{u}_h, 
    \underline{r}_h)_{\Th} = 0,
\]
which implies the identity \eqref{cr0} thanks to the fact that $\alpha_h|_K\in \poly^0(K)$ and 
$\underline \sigma_h, \nabla\Pi_h^{CR}\widehat{u}_h\in
[\poly^0(K)]^d$. 

Next, for $i\in \{1, \dots, d+1\}$, taking test function $v_h$ in \eqref{wk1} to be supported only on
an element $K$, whose values is $1$ on $m_K^i$ and zero
on $m_K^j$ for $j\not=i$, we get
\begin{align*}
    \tau_K^i(u_h(m_K^i)- \widehat{u}_h(m_K^i))|F^i|
    +\frac{|K|}{d+1}\beta(m_K^i)u_h(m_K^i)
    = \frac{|K|}{d+1}f(m_K^i).
\end{align*}
Solving for the value $u_h(m_K^i)$ and using the definition of $\tau_K$ and $h_K$, i.e.,  $\tau_K^i=\frac{\alpha_h}{h_K}$ and $h_K^i=\frac{|K|}{|F^i|}$,
we immediately get the equality \eqref{cr1}.

Finally, let us prove the identity \eqref{cr2}. By the definition of $\Pi_h^{CR}$ and 
\eqref{cr0}--\eqref{cr1}, we have 
\begin{align*}
Q_{\partial K}^0(\underline{\sigma}_h\cdot\underline{n}_K\widehat{v}_h)
    =&\; 
    \langle\underline{\sigma}_h\cdot\underline{n}_K, \Pi_h^{CR}\widehat{v}_h\rangle_{\partial K}=\;
    (\underline{\sigma}_h, \nabla\Pi_h^{CR}\widehat{v}_h)_K=
   -Q_K^0(\alpha_h\nabla{\Pi_h^{CR}\widehat u_h} \nabla\Pi_h^{CR}\widehat{v}_h)_K,
\end{align*}
and 
\begin{align*}
    Q_{\partial K}^0(\tau_K(u_h-\widehat{u}_h)\widehat{v}_h)
    =&\; 
    \sum_{i=1}^{d+1}|F^i|\tau_K^i(u_h(m_K^i)-\widehat u_h(m_K^i))\widehat v_h(m_K^i)\\
    =&\;
    \sum_{i=1}^{d+1}\frac{|K|}{d+1}\gamma_h(m_K^i)\left(-\beta(m_K^i)\widehat u_h(m_K^i)
    +f(m_K^i)\right)
    \widehat v_h(m_K^i)\\
    =&\;
    -Q_K^1(\gamma_h\beta\Pi_h^{CR}\widehat u_h\Pi_h^{CR}\widehat v_h)
    +Q_K^1(\gamma_hf\Pi_h^{CR}\widehat v_h),
\end{align*}
where we skipped 
the algebraic manipulation in the above second equality.
Combining the above two identities with \eqref{wkB}, we get 
\begin{align}
    \label{hdgX}
    \sum_{K\in\Th}\left(Q_K^0(\alpha_h\nabla\Pi_h^{CR}\widehat u_h\cdot\nabla \Pi_h^{CR}\widehat v_h)
    +Q_K^1(\gamma_h\beta \Pi_h^{CR}\widehat u_h
    \Pi_h^{CR}\widehat v_h)\right)
    = \sum_{K\in\Th}
    Q_K^1(\gamma_h f\Pi_h^{CR}\widehat v_h), 
    \end{align}
for all $\widehat v_h\in M_h^0$,
which implies the equivalence \eqref{cr2}. 
\end{proof}

\begin{remark}[Equivalence between {\sf HDG-P0} and CR discretizations]
Theorem \ref{theo:HDGP0=CR-ellip}
implies the equivalence between the {\sf HDG-P0} scheme \eqref{ellipWeak0} and the 
slightly modified CR discretization \eqref{cr} with numerical integration, where the modification is the introduction of the scaling parameter $\gamma_h$.
\red{This result is motivated by the equivalence of the lowest order Raviart-Thomas mixed method and the nonconforming CR method \cite{arnold1985mixed, marini1985inexpensive}.
}
Note that, when $\beta\not=0$, the {\sf HDG-P0} scheme \eqref{ellipWeak0} 
converges to the original CR discretization (with $\gamma_h$ set to 1 in \eqref{cr}) in the asymptotic limit as the mesh size $h$ approaches zero.
Moreover, for the pure diffusion case with $\beta=0$, the scheme \eqref{ellipWeak0} is always equivalent to the original CR discretization for any choice of non-zero stabilization parameter $\tau$ in the sense that the equality \eqref{cr2} always holds.
\end{remark}

\begin{remark}[On {\sf HDG-P0} solution procedure]
In the practical implementation of the {\sf HDG-P0}
scheme, we first 
locally eliminate $\underline \sigma_h$ and $u_h$ from 
\eqref{ellipWeak0} to arrive at the global (condensed) linear system \eqref{hdgX} for 
$\widehat u_h$. 
After solving for $\widehat u_h$ in the system \eqref{hdgX}, we then recover 
$\underline \sigma_h$ and $u_h$.
The next subsection focuses on the efficient linear system solver for \eqref{hdgX} via geometric multigrid. 
Due to the algebraic equivalence between the condensed HDG system \eqref{hdgX} and the (modified) nonconforming system \eqref{cr}, we can simply use the rich multigrid theory for nonconforming methods \cite{brenner1989optimal, braess1990multigrid, bramble1991analysis, chen1994analysis, brenner1999convergence, brenner2004convergence, duan2007generalized}
to precondition the condensed HDG system \eqref{hdgX}.
\end{remark}

\subsection{Multigrid algorithm}
\label{subsec:ellipMG}
In this subsection, we present the detailed multigrid algorithm for the condensed system \eqref{hdgX}, using the nonconforming multigrid theory of Brenner \cite{brenner2004convergence}.

\red{We consider a set of hierarchical meshes for the multigrid algorithm.
Let $\mathcal{T}_1$ be a conforming simplicial triangulation of $\Omega$ and let $\mathcal{T}_l$ be obtained by successive mesh refinements for $l=2, \dots, J$, with the final mesh $\mathcal{T}_J = \Th$.
We denote $h_l$ as the maximum mesh size of the triangulation $\mathcal{T}_l$. We assume the triangulation $\mathcal{T}_l$ is conforming, shape-regular, and quasi-uniform on each level, and the difference of the mesh size between two adjacent mesh levels is bounded, i.e. $h_{l} \lesssim h_{l+1}$. }
Let $\mathcal{E}_l$ be the mesh skeleton of $\mathcal{T}_l$. 
Denote $W_l$, $V_l$, $V_l^{CR}$
as the corresponding finite element spaces on the $l$-th level mesh $\mathcal{T}_l$, and $M_l$
as the corresponding piecewise constant finite element space on the $l$-th level mesh skeleton $\mathcal{E}_l$.
We define the following $L_2$-like inner product on the space $M_l$:
\[
(\widehat{u}_{l},\; \widehat{v}_{l})_{0,l}:=
\sum_{K\in \mathcal{T}_l}
Q_K^1(\Pi_l^{CR}\widehat{u}_{l}\,
\Pi_l^{CR}\widehat{v}_{l})
= 
\sum_{K\in \mathcal{T}_l}
\sum_{i=1}^{d+1}\frac{|K|}{d+1}\widehat{u}_{l}(m_K^i)\widehat{v}_{l}(m_K^i),
\]
with its induced norm as $\|\cdot\|_{0,l}$, 
where  $\Pi_l^{CR}$ is the interpolation operator~\eqref{picr} from $M_l$
to $V_l^{CR}$. 
We define ${A}_l:M_{l}^0\rightarrow M_{l}^0$ as the linear operator satisfying 
\begin{align}
\label{lol}
        ({A}_l\widehat u_l, \widehat v_l)_{0,l}
    := a_l(\widehat u_l, \widehat v_l),\quad \forall 
    \widehat u_l, \widehat v_l\in M_{l}^0,
\end{align}
where $a_l$ is the following bilinear form on the $l$-th level mesh:
\begin{align}
\label{lolX}
    a_l(\widehat u_l, \widehat v_l):=
        \sum_{K\in\mathcal{T}_l}\left(Q_K^0(\alpha_l\nabla\Pi_l^{CR}\widehat u_l\cdot\nabla \Pi_l^{CR}\widehat v_l)
    +Q_K^1(
    \beta_l
    \Pi_l^{CR}\widehat u_l\,
    \Pi_l^{CR}\widehat v_l)\right),\quad \forall 
    \widehat u_l, \widehat v_l\in M_{l}^0,
\end{align}
where $\alpha_l= (\alpha_l^{-1})^{-1}$ with $\alpha_l^{-1}\in W_l$ being the $L_2$-projection of $\alpha^{-1}$ onto the piecewise constant space $W_l$,
and $\beta_l\in V_h$ satisfies 
\[
\beta_l(m_K^i) = \gamma_l(m_K^i)\beta(m_K^i), \quad\forall i\in\{1,\dots, d+1\}, \forall K\in\T_{l}, 
\]
with $\gamma_l:=
 \frac{\alpha_l}{\alpha_l+ 
 \red{\frac{h_{K_l}^2}{d+1}\beta}}$
and 
\red{$h_{K_l}$}
being the mesh size of elements in $\mathcal{T}_l$.
Further denoting $f_l\in M_l^{0}$ such that
\[
(f_l, \widehat{v}_l)_{0,l} = 
\sum_{K\in\mathcal{T}_l} Q_K^1(\gamma_l f\Pi_l^{CR}\widehat v_l), \quad \forall \widehat v_l\in M_l^0,
\]
the operator form of the linear system \eqref{hdgX} is simply
to find $\widehat u_h\in M_J^0$  such that
\begin{align}
\label{op1}
    A_J\widehat u_h = f_J.
\end{align}
A multigrid algorithm for the above system \eqref{op1} needs two ingredients:
an intergrid transfer $I_{l-1}^{l}:M_{l-1}^0\rightarrow M_{l}^0$ operator that connects the spaces $M_{l-1}^{0}$ and $M_{l}^0$ on two consecutive mesh levels, and a smoothing operator $R_l: \; M_{l}^0\rightarrow M_l^0$
that takes care of high-frequency errors.
For the intergrid transfer operator, 
we use the following well-known nonconforming averaging operator 
\cite{brenner1989optimal, braess1990multigrid}: 
\begin{align}
    \label{intergridOpr}
    (I_{l-1}^l \widehat{v}_{l-1})(m_F):= \left\{
        \renewcommand{\arraystretch}{1.5} 
        \begin{array}{ll}
            (\Pi^{\CR}_{l-1}\widehat{v}_{l-1})(m_F),\;& \text{if $F\in \mathcal E_l^0$ lies inside $\mathcal{T}_{l-1}$},  \\[1ex]
            \frac{1}{2}\left((\Pi^{\CR}_{l-1}\widehat{v}_{l-1})^{+}(m_F) + (\Pi_{l-1}^{\CR}\widehat{v}_{l-1})^{-}(m_F)\right),\; & \text{if $F\in \mathcal E_l^0$ lies on $\mathcal{E}_{l-1}^o$},
        \end{array}
    \right.
\end{align}
where $(\Pi^{\CR}_{l-1}\widehat{v}_{l-1})^{+}$ and $(\Pi^{\CR}_{l-1}\widehat{v}_{l-1})^{-}$ are the values of $\Pi^{\CR}_{l-1}\widehat{v}_{l-1}$ on two adjacent elements $K^{+},\; K^{-}\in{\mathcal{T}_{l-1}}$ that share the facet $F$. 
We further denote the restriction operator $I_{l}^{l-1}:\; M^0_{l}\rightarrow M_{l-1}^0$ as the transpose of $I_{l-1}^{l}$ with respect to the inner product $(\cdot,\; \cdot)_{0, l}$:
\[
(I_l^{l-1} \widehat v, \widehat w)_{0, l-1}
= (\widehat v, I_{l-1}^{l} \widehat w)_{0, l}, \quad\forall  \widehat v\in M_l^0, \widehat w\in M_{l-1}^0.
\]
For the smoothing operator $R_l$, we simply take it to be the classical Jacobi or Gauss-Seidel smoother for the operator ${A}_l$.
The proof of multigrid convergence requires another operator $P_l^{l-1}:M_l^0\rightarrow M_{l-1}^0$, which is the transpose of $I_{l-1}^l$ with respect to the bilinear form $a_l$, i.e., 
\[
a_{l-1}(P_l^{l-1}\widehat u_{l}, \widehat v_{l-1})
=
a_{l}(\widehat u_{l}, I_{l-1}^l\widehat v_{l-1}), \quad\forall \widehat u_l\in M_l^0, \widehat v_{l-1}\in M_{l-1}^0.
\]
The operator $P_{l}^{l-1}$ is for multigrid analysis only, and never enters the actual multigrid algorithm.

We now follow \cite[Algorithm 2.1]{brenner2004convergence}
to present the classical symmetric $V$-cycle algorithm for the system $A_l \widehat u_l = f_l\in M_l^{0}$.
\begin{algorithm}[H]
\caption{The $V$-cycle algorithm for $A_l\widehat u_l = f_l$.}
\label{alg:ellipMG}
\begin{algorithmic}
\State The $l$-th level symmetric $V$-cycle algorithm produces $MG_{A_l}(l, f_l, \widehat u_l^0, m)$
as an approximation solution for
$A_l\widehat u_l = f_l$ with initial guess $\widehat u_l^0$, where $m$ denotes the number of pre-smoothing and post-smoothing steps.
\If {$l = 1$}
    \State
    ${MG}_{A_l}(l, \widehat{u}_l, f_l) = ({A}_l)^{-1} f_l$.
    \Else
\State Perform the following three steps:
\State (1) {\it pre-smoothing.}
For $j = 1,\dots,m$, compute $\widehat u_l^j$ by
    \[\widehat{u}_l^{j} = 
    \widehat{u}_l^{j-1} + {R}_l(f_l - {A}_l \widehat{u}_l^{j-1}).\]
\State (2) {\it Coarse grid correction.}
Let $\widehat r_{l-1}= I_l^{l-1}(f_l-A_l\widehat u_l^m)$ and compute $\widehat u_l^{m+1}$ by 
\[
\widehat{u}_l^{m+1} = 
\widehat{u}_l^{m}+I_{l-1}^l MG_{A_{l-1}}(l-1, \widehat r_{l-1}, 0 ,m). 
\]
\State (3) {\it Post-smoothing.} 
For $j=m+2,\dots, 2m+1$, compute $\widehat u_l^j$
by 
\[
\widehat{u}_l^{j} = 
    \widehat{u}_l^{j-1} + {R}_l^T(f_l - {A}_l \widehat{u}_l^{j-1}),
\]
\State where $R_l^T$ is the transpose of $R_l$ with respect to the inner product $(\cdot,\cdot)_{0,l}$.
\State We then define 
$MG_{A_l}(l, f_l,\widehat{u}_l^0, m) = \widehat u_l^{2m+1}$.\EndIf

\end{algorithmic}
\end{algorithm}


By the algebraic equivalence of the nonconforming system \eqref{cr} and the condensed HDG system~\eqref{hdgX}, we immediately have the following optimality result of the above multigrid algorithm
 from \cite{brenner2004convergence}.
 \red{We note that both the equivalence result in Theorem \ref{theo:HDGP0=CR-ellip} and the optimal multigrid theory in \cite{brenner2004convergence} do not require the full elliptic regularity assumption \eqref{reg}. Thus our proposed multigrid algorithm for the {\sf HDG-P0} scheme  is optimal in the low-regularity case, where $u \in H^{1+s}(\Omega)\cap H^1_0(\Omega)$ and $s\in(\frac{1}{2}, 1]$ is the regularity constant for the model problem \eqref{ellipModel}.
}
\begin{theorem}[Theorem 5.2 of \cite{brenner2004convergence}]
\label{thm:mg}
There exist positive mesh-independent constants $C$
and $m_*$ such that 
\[
|\!|\!|\mathbb{E}_{l,m}\widehat v|\!|\!|_{1, l}
\le 
\frac{C}{m^s}|\!|\!|\widehat v|\!|\!|_{1, l},\quad\forall \widehat v\in M_l^{0}, l\ge 1, m\ge m^*, 
\]
where 
$s\in(\frac12,1]$ is the regularity constant such that the solution $u$ to \eqref{ellipOrig}
satisfies 
\[
\|u\|_{1+s}\lesssim \|f\|_{-1+s}, 
\]
$|\!|\!|\cdot|\!|\!|_{1, l}$ is the mesh-dependent norm induced by the linear operator $A_l$ \eqref{lol}, i.e., 
$|\!|\!|\widehat v|\!|\!|_{1, l}:=\sqrt{(A_l\widehat v, \widehat v)_{0,l}}$, 
and $\mathbb{E}_{l,m}: M_l^0\rightarrow M_l^0$
is the operator relating the initial error and the final error of the multigrid $V$-cycle algorithm, i.e., 
\[
\mathbb{E}_{l,m}(\widehat u_l-\widehat u_l^0) := \widehat u_l - MG_{A_l}(l, f_l, \widehat u_l^0, m). 
\]
\end{theorem}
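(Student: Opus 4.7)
The plan is to reduce the statement to the nonconforming $V$-cycle convergence theory of Brenner \cite{brenner2004convergence} via the isomorphism $\Pi^{\CR}_l: M_l^0 \to V_l^{\CR,0}$ introduced in \eqref{picr}. Concretely, the bilinear form $a_l$ in \eqref{lolX} is defined through $\Pi^{\CR}_l$, and $(\cdot,\cdot)_{0,l}$ is the pull-back of a quadrature-based $L_2$ inner product on $V_l^{\CR,0}$. Therefore the operator $A_l$ on $M_l^0$ is unitarily equivalent to a (modified) CR operator $\widetilde A_l$ on $V_l^{\CR,0}$, and the nonconforming averaging transfer \eqref{intergridOpr} conjugates under $\Pi^{\CR}_l$ to the standard nonconforming transfer between $V_{l-1}^{\CR,0}$ and $V_l^{\CR,0}$ analyzed in Brenner's paper. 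With this identification, Algorithm~\ref{alg:ellipMG} is pointwise the same as the standard nonconforming $V$-cycle for $\widetilde A_l$, so it suffices to check that $\widetilde A_l$ fits the abstract framework of \cite[Section~2--5]{brenner2004convergence}.

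First I would verify the two coefficient-independence properties that make the framework applicable. The coefficients $\alpha_l$ and $\beta_l=\gamma_l\beta$ satisfy $\alpha_0 \le \alpha_l \le \alpha_1$ and $0 \le \beta_l \le \beta_1$ uniformly in $l$, since $\gamma_l \in (0,1]$ by construction. In particular the quadrature norms $\|\cdot\|_{0,l}$ and the piecewise $L_2$-norm are equivalent with constants depending only on shape-regularity, and $a_l(\cdot,\cdot)$ is spectrally equivalent on each level to the exactly-integrated CR form with coefficients $(\alpha,\beta_l)$. Second, the two-level regularity and approximation estimate required by Brenner (his (2.11)--(2.12) and the elliptic regularity-type estimate with index $s\in(\tfrac12,1]$) must be transferred to $a_l$. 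I would deduce it from the corresponding estimate for the standard CR form by a perturbation argument that treats the replacement of $\beta$ by $\gamma_l\beta$ and of exact integration by $Q_K^1$ as an $O(h_l^2)$ perturbation of the bilinear form; this is enough because the regularity-approximation assumption is used only up to $O(h_l^{2s})$ factors.

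Next I would verify the smoothing property for the Jacobi/Gauss--Seidel $R_l$ associated with $A_l$. Since the mass matrix of $(\cdot,\cdot)_{0,l}$ is spectrally equivalent to a diagonal matrix (the quadrature $Q_K^1$ is just a weighted sum of nodal values), and the stiffness matrix of $a_l$ is spectrally equivalent to the standard CR stiffness, the eigenvalues of $A_l$ scale like $h_l^{-2}$; Brenner's smoothing estimate then follows from the same argument as in the pure diffusion case. Finally I would verify the intergrid stability $a_l(I_{l-1}^l\widehat v,I_{l-1}^l\widehat v) \lesssim a_{l-1}(\widehat v,\widehat v)$ and the ``almost identity'' estimate $\|(I - P_l^{l-1})\widehat v\|_{0,l-1}^2 \lesssim (\text{quotient of mesh sizes})\, a_l(\widehat v,\widehat v)$. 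Under the isomorphism $\Pi^{\CR}_l$ these are precisely the estimates proved in \cite{brenner1989optimal,braess1990multigrid, brenner2004convergence} for the nonconforming averaging operator, modulo the same harmless coefficient perturbations.

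Once these ingredients are in place, \cite[Theorem~5.2]{brenner2004convergence} applies verbatim to $\widetilde A_l$, and the claimed contraction $\|\mathbb{E}_{l,m}\widehat v\|_{1,l} \le C m^{-s} \|\widehat v\|_{1,l}$ follows by pulling back along $\Pi^{\CR}_l$. The main obstacle I anticipate is the bookkeeping for the level-dependent scaling $\gamma_l$: it prevents $a_l$ and $a_{l-1}$ from being restrictions of a common bilinear form on the finest level, so one must check that the difference $\gamma_l - \gamma_{l-1}$ is small enough (of order $h_{l-1}^2$, uniformly in the coefficients) that the coarse-grid correction analysis and the regularity-approximation estimate still go through. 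The pure diffusion case $\beta\equiv 0$ (where $\gamma_l \equiv 1$ and $a_l$ is literally the standard CR form on level $l$) is a useful sanity check: in that case the theorem reduces directly to \cite[Theorem~5.2]{brenner2004convergence}.
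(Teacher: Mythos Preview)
Your proposal is correct and follows essentially the same route as the paper: both reduce the $M_l^0$ algorithm to the nonconforming CR $V$-cycle via the isomorphism $\Pi_l^{\CR}$ and then invoke Brenner's abstract framework, with the paper simply citing that assumptions (3.6)--(3.12) of \cite{brenner2004convergence} hold (referring to \cite[Section~6]{brenner2004convergence} and \cite{brenner1999convergence}) rather than spelling out the verification. Your concern about the level-dependent $\gamma_l$ is reasonable bookkeeping, but note that Brenner's 2004 framework already allows distinct bilinear forms $a_l$ on each level, so this does not require a separate perturbation argument beyond checking the abstract assumptions level by level.
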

\begin{proof}
    This multigrid algorithm is algebraically equivalent to the $V$-cycle multigrid algorithm for the nonconforming CR scheme \eqref{cr} with diffusion coefficient $\alpha_l\in W_l$
    and reaction coefficient $\beta_l\in V_l$  in each level.
It is easy to verify that the assumptions (3.6)--(3.12) of \cite{brenner2004convergence}
are satisfied for the space $M_l^{0}$ and operators $I_{l-1}^l$ and $P_l^{l-1}$; see, e.g.,  \cite[Section 6]{brenner2004convergence} and \cite{brenner1999convergence}. 
Hence Theorem \ref{thm:mg} is simply a restatement of Theorem 5.2 of \cite{brenner2004convergence}.
\end{proof}
\section{{\sf HDG-P0} for the generalized Stokes equations}
\label{sec:MG4Stokes}
We follow the same procedures as in the previous section to present the {\sf HDG-P0} scheme for the generalized Stokes equations together with its optimal a priori error analysis, and the corresponding multigrid algorithms for the condensed linear system.

\subsection{The model problem}
We consider the following model problem with a homogeneous Dirichlet boundary condition:
\begin{subequations}
    \label{stokesOrig}
    \begin{alignat}{2}
        \beta \underline{u} - \nabla\cdot(\mu\nabla\underline{u}) + \nabla p 
        =& \;\underline{f}, \quad &&\text{in $\Omega$,}
        \\
        \nabla\cdot\underline{u} =& \; 0, \quad&&\text{in $\Omega$,}
        \\
        \underline{u}=& \;\underline{0}, \quad &&\text{on $\partial\Omega$,}
    \end{alignat}
\end{subequations}
where $\underline u$ is the velocity, $p$ is the pressure,
$\underline{f}\in\underline{L}^2(\Omega)$ is the source term, $\mu>0$ is the viscosity constant, and $\beta\ge0$
is the low-order term constant, which typically represents the inverse of time step size in a implicit-in-time discretization of the unsteady Stokes flow.

To present the {\sf HDG-P0} scheme, we introduce the tensor $\dunderline{L}:= - \mu\nabla\underline{u}$ as a new variable and rewrite the equations \eqref{stokesOrig} into a first-order system:
\begin{subequations}
    \label{stokesModel}
    \begin{alignat}{2}
        \mu^{-1}\dunderline{L} + \nabla\underline{u} =& \; 0, \quad&&\text{in $\Omega$,}
        \\
        \nabla\cdot\dunderline{L} + \beta\underline{u} 
        + \nabla p =& \; \underline{f}, \quad&&\text{in $\Omega$,}
        \\
        \nabla\cdot\underline{u} =& \; 0, \quad&&\text{in $\Omega$,}
        \\
        \underline{u} =& \; \underline{0}, \quad&&\text{on $\partial\Omega$.}
    \end{alignat}
\end{subequations}

\subsection{The {\sf HDG-P0} scheme}
The {\sf HDG-P0} discretization for the system 
\eqref{stokesModel} reads as follows:
find $(\dunderline{L}_h, \underline{u}_h, \widehat{\underline{u}}_h, p_h)\in\dunderline{W}_h\times\underline{V}_{h}\times\underline{M}_{h}^{0}\times W_{h}^{0}$ such that
\begin{subequations}
\label{stokesWeak0}
\begin{align}
    \label{stW0-1}
    \sum_{K\in\Th} \left(Q_K^0 (\mu^{-1}\dunderline{L}_h\cdot \dunderline{G}_h) 
    + Q_{\partial{K}}^0 ( \dunderline{G}_h\underline{n} \cdot \widehat{\underline{u}}_h ) \right) &= 0,
    \\
    \label{stW0-2}
    \sum_{K\in\Th}\left(Q_{\partial K}^0( \tau_K(\underline{u}_h - \widehat{\underline{u}}_h ) \cdot \underline{v}_h )
    + Q_K^1(\beta\underline{u}_h\cdot \underline{v}_h) \right) &
    = \sum_{K\in\Th} Q_K^1(\underline{f}\cdot \underline{v}_h),
    \\
    \label{stW0-3}
    \sum_{K\in\Th} Q_{\partial K}^0  (\widehat{\underline{u}}_h\cdot\underline{n}_K q_h)
    &= 0,
    \\
    \label{stW0-4}
    \sum_{K\in\Th} \left( Q_{\partial K}^0 ( (\dunderline{L}_h + p_h\dunderline{I})\underline{n}_K\cdot\widehat{\underline{v}}_h) 
    + Q_{\partial K}^0 ( \tau_K (\underline{u}_h - \widehat{\underline{u}}_h)\cdot \widehat{\underline{v}}_h) \right) &= 0,
\end{align}
\end{subequations}
for all $(\dunderline{G}_h, \underline{v}_h, \widehat{\underline{v}}_h, q_h)\in\dunderline{W}_h\times\underline{V}_{h}\times \underline{M}_{h}^{0}\times W_{h}^{0}$, where $\dunderline{I}$ is the unit diagonal matrix,
and $\tau_K = \frac{\mu}{h_K}$ is the stabilization parameter. Taking test functions $(\dunderline{G}_h, \underline{v}_h, \widehat{\underline{v}}_h, q_h) = (\dunderline{L}_h, \underline{u}_h, -\widehat{\underline{u}}_h, p_h)$ in \eqref{stokesWeak0} and adding, we have the following energy identity:
\[
    \sum_{K\in\Th}\left(
        Q_K^0 (\mu^{-1}|\dunderline{L}_h|^2)
        + Q_{\partial K}^0 (\tau_K (\underline{u}_h -\widehat{\underline{u}}_h)^2) 
        + Q_K^1 (\beta \underline{u}_h^2)
    \right)
    = \sum_{K\in\Th} Q_K^1(\underline{f}\cdot\underline{u}_h).
\]

\subsection{An a priori error analysis for {\sf HDG-P0}}
Similar to the reaction-diffusion case, 
\red{to simplify the a priori error analysis in this subsection, we assume $(\dunderline{L}, \underline{u}, p)\in \dunderline{H}^1(\Omega)\times (\underline{H}^2(\Omega)\cap \underline{H}^1_0(\Omega))\times (H^1(\Omega)\backslash\mathbb{R})$, together with the following elliptic regularity result:
\begin{align}
\label{stReg}
    \mu^{-1/2}\|\dunderline{L}\|_{1} 
    +\mu^{-1/2}\|p\|_1
    + (\mu^{1/2}+\beta^{1/2})\|\underline{u}\|_{2}
    \lesssim c_{reg} \|\underline{f}\|_{0},
\end{align}
which holds when the domain $\Omega$ is convex.}
Furthermore, we assume the following stronger regularity of the source term $\underline{f}$  to perform the a priori error analysis:
\[
    \underline{f}\in\underline{W}^{1,\infty}(\bar{\Omega}),
\]
and compare the solution to the {\sf HDG-P0} scheme \eqref{stokesWeak0} with the solution to a similar HDG scheme with exact integration. Let $(\dunderline{L}_h^1, \underline{u}_h^1, \widehat{\underline{u}}_h^1, p_h^1)\in\dunderline{W}_h\times\underline{V}_{h}\times\underline{M}_{h}^{0}\times W_{h}^{0}$ be the solution to the following scheme:
\begin{subequations}
\label{stokesWeak}
\begin{align}
    \sum_{K\in\Th} \left(Q_K^0 (\mu^{-1}\dunderline{L}_h^1\cdot \dunderline{G}_h) 
    + Q_{\partial{K}}^0 ( \dunderline{G}_h\underline{n} \cdot \widehat{\underline{u}}_h^1 ) \right) &= 0,
    \\
    \label{stW2}
    \sum_{K\in\Th}\left(Q_{\partial K}^0( \tau_K(\underline{u}_h^1 - \widehat{\underline{u}}_h^1 ) \cdot \underline{v}_h )
    + (\beta\underline{u}_h^1,\; \underline{v}_h)_K \right) &
    = \sum_{K\in\Th} (\underline{f},\; \underline{v}_h)_K,
    \\
    \sum_{K\in\Th} Q_{\partial K}^0  (\widehat{\underline{u}}_h^1\cdot\underline{n}_K q_h)
    &= 0,
    \\
    \sum_{K\in\Th} \left( Q_{\partial K}^0 ( (\dunderline{L}_h^1 + p_h^1\dunderline{I})\underline{n}_K\cdot\widehat{\underline{v}}_h) 
    + Q_{\partial K}^0 ( \tau_K (\underline{u}_h^1 - \widehat{\underline{u}}_h^1)\cdot \widehat{\underline{v}}_h) \right) &= 0,
\end{align}
\end{subequations}
for all $(\dunderline{G}_h, \underline{v}_h, \widehat{\underline{v}}_h, q_h)\in\dunderline{W}_h\times\underline{V}_{h}\times\underline{M}_{h}^{0}\times W_{h}^{0}$. 
Note that in two dimension ($d=2$), the above two schemes only differ by the right hand side term as 
$Q_K^1$ is exact for quadratic polynomial integration, while in three dimensions ($d=3$) the quadrature scheme \eqref{stokesWeak0} further introduce numerical integration errors in the left hand side volume term in \eqref{stW0-2}.
We use the following lemma to link the above system \eqref{stokesWeak} with the lowest order (mixed) HDG scheme with projected jumps (and exact integration) that has been analyzed in \cite{qiu2016superconvergent} for the incompressible Navier-Stokes equations. The proof procedure is similar to Lemma \ref{lemma:ellipEq} and is omitted here for simplicity.
\begin{lemma}
    \label{lemma:stEq}
    Let $(\dunderline{L}_h^1, \underline{u}_h^1, \widehat{\underline{u}}_h^1, p_h^1)\in\dunderline{W}_h\times\underline{V}_{h}\times\underline{M}_{h}^{0}\times W_{h}^{0}$ be the solution to \eqref{stokesWeak}, then $(\dunderline{L}_h^1, \underline{u}_h^1, \widehat{\underline{u}}_h^1, p_h^1)$ satisfies
    \begin{subequations}
    \label{stokesWeakX}
    \begin{align}
        \mu^{-1}(\dunderline{L}_h^1,\; \dunderline{G}_h)_{\mathcal{T}_h}  
        - (\underline{u}_h^1,\; \nabla\cdot\dunderline{G}_h)_{\mathcal{T}_h}
        + \langle\widehat{\underline{u}}_h^1,\; \dunderline{G}_h\underline{n}\rangle_{\partial\mathcal{T}_h} &= 0,
        \\
        \left( \nabla\cdot(\dunderline{L}_h^1 + p_h^1\dunderline{I}),\; \underline{v}_h \right)_{\mathcal{T}_h}
        + \left\langle \tau_K\underline{\varPi}_{0}(\underline{u}_h^1 - \widehat{\underline{u}}_h^1),\; \underline{v}_h \right\rangle_{\partial\mathcal{T}_h}
        + \beta(\underline{u}_h^1,\; \underline{v}_h)_{\mathcal{T}_h} &
        = (\underline{f},\; \underline{v}_l)_{\mathcal{T}_h}
        \\
        - (\underline{u}_h^1,\; \nabla q_h)_{\mathcal{T}_h}
        + \langle \widehat{\underline{u}}_h^1\cdot\underline{n}_K,\; q_h\rangle_{\partial\mathcal{T}_h} 
        &= 0,
        \\
        \left\langle (\dunderline{L}_h^1 + p_h^1\dunderline{I})\underline{n}_K + \tau_K \underline{\varPi}_{0}(\underline{u}_h^1 - \widehat{\underline{u}}_h^1),\; \widehat{\underline{v}}_h \right\rangle_{\partial\mathcal{T}_h} &= 0,
    \end{align}
    \end{subequations}
    for all $(\dunderline{G}_h, \underline{v}_h, \widehat{\underline{v}}_h, q_h)\in\dunderline{W}_h\times\underline{V}_{h}\times\underline{M}_{h}^{0}\times W_{h}^{0}$, where $\underline{\varPi}_0$ is the $L_2$-projection on to the piecewise constant vector on the mesh skeletons.
\end{lemma}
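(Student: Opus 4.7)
The plan is to mirror the proof of Lemma~\ref{lemma:ellipEq} term-by-term, simply repeating the elementary quadrature-exactness identities for each integrand and absorbing the extra pressure/divergence terms, which turn out to contribute nothing after integration by parts. I would verify the four equations of \eqref{stokesWeakX} one at a time against the four equations of \eqref{stokesWeak}.

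For \eqref{stokesWeak}(a), note that $\dunderline{L}_h^1$ and $\dunderline{G}_h$ are element-wise constant and $\mu$ is a true constant, so $Q_K^0(\mu^{-1}\dunderline{L}_h^1\cdot\dunderline{G}_h)=\mu^{-1}(\dunderline{L}_h^1,\dunderline{G}_h)_K$ exactly; likewise $\widehat{\underline{u}}_h^1$ and $\dunderline{G}_h\underline{n}_K$ are both facet-wise constant so $Q_{\partial K}^0(\dunderline{G}_h\underline{n}_K\cdot\widehat{\underline{u}}_h^1)=\langle\widehat{\underline{u}}_h^1,\dunderline{G}_h\underline{n}_K\rangle_{\partial K}$. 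Since $\nabla\cdot\dunderline{G}_h\equiv 0$, the volumetric $(\underline{u}_h^1,\nabla\cdot\dunderline{G}_h)$ term in \eqref{stokesWeakX}(a) vanishes trivially and the identity follows. The divergence equation \eqref{stokesWeakX}(c) is handled by the same reasoning: $Q_{\partial K}^0(\widehat{\underline{u}}_h^1\cdot\underline{n}_K q_h)=\langle\widehat{\underline{u}}_h^1\cdot\underline{n}_K,q_h\rangle_{\partial K}$ because $q_h\in W_h^0$ and $\widehat{\underline{u}}_h^1\cdot\underline{n}_K$ are both facet-wise constant, and the would-be volume contribution $(\underline{u}_h^1,\nabla q_h)_{\mathcal{T}_h}$ vanishes since $\nabla q_h\equiv 0$ elementwise.

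For the momentum equation \eqref{stokesWeakX}(b), I would invoke the projection identity already used in Lemma~\ref{lemma:ellipEq}: on each facet $F^i\subset\partial K$, since $\tau_K$ and $\widehat{\underline{u}}_h^1$ are constant and $\underline{v}_h$ is affine, the one-point rule picks up the average, yielding
\[
Q_{\partial K}^0(\tau_K(\underline{u}_h^1-\widehat{\underline{u}}_h^1)\cdot\underline{v}_h)
=\langle\tau_K\underline{\varPi}_0(\underline{u}_h^1-\widehat{\underline{u}}_h^1),\underline{v}_h\rangle_{\partial K}.
\]
The remaining $(\beta\underline{u}_h^1,\underline{v}_h)_K$ and $(\underline{f},\underline{v}_h)_K$ terms are already exact integrals in \eqref{stW2}, and the missing $(\nabla\cdot(\dunderline{L}_h^1+p_h^1\dunderline{I}),\underline{v}_h)_{\mathcal{T}_h}$ term vanishes because $\dunderline{L}_h^1+p_h^1\dunderline{I}$ is piecewise constant. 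The flux equation \eqref{stokesWeakX}(d) is immediate: $(\dunderline{L}_h^1+p_h^1\dunderline{I})\underline{n}_K$ is facet-constant, so $Q_{\partial K}^0$ is exact for its product with $\widehat{\underline{v}}_h$, while the $\tau_K$ term again collapses to the $\underline{\varPi}_0$ form.

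There is essentially no obstacle here; the result is a near-verbatim adaptation of Lemma~\ref{lemma:ellipEq} with two cosmetic differences worth flagging in the writeup: first, the diffusion coefficient $\mu$ is a scalar constant (not a projected coefficient like $\alpha_h$), which actually simplifies the first step; and second, the new pressure-gradient and divergence contributions are handled uniformly by the observation that $\dunderline{G}_h$, $q_h$, $\dunderline{L}_h^1$, and $p_h^1$ are all piecewise constant, so any volumetric divergence or gradient acting on them is identically zero and the equivalence reduces to matching facet terms. For this reason the authors' stated omission of the proof is justified, and my writeup would simply enumerate the four identities above and invoke Lemma~\ref{lemma:ellipEq} for the projection step.
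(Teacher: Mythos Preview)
Your proposal is correct and follows exactly the approach the paper itself indicates: the authors omit the proof, stating only that ``the proof procedure is similar to Lemma~\ref{lemma:ellipEq},'' and your argument is precisely that adaptation, carried out term-by-term with the two cosmetic differences (constant $\mu$ rather than projected $\alpha_h$, and the extra pressure/divergence terms vanishing because the relevant functions are piecewise constant) correctly identified.
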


With Lemma \ref{lemma:stEq}, we adapt from \cite{qiu2016superconvergent} and get the following a priori error estimates for the HDG scheme \eqref{stokesWeak} with exact integration; 
see \cite{qiu2016superconvergent} for more details.

\begin{corollary}
\label{coro:Stokes}
    Let $(\dunderline{L}, \underline{u}, p)$ be the exact solution to the model problem \eqref{stokesModel}.
    Let $(\dunderline{L}_h^1, \underline{u}_h^1, \widehat{\underline{u}}_h^1, p_h^1)\in\dunderline{W}_h\times\underline{V}_{h}\times\underline{M}_{h}^{0}\times W_{h}^{0}$ be the solution to \eqref{stokesWeak}. Then there holds
    \begin{subequations}
    \begin{align}
    \label{stEner-ex1}
        \|{\mu}^{-1/2}(\dunderline{L} - \dunderline{L}_h^1)\|_{0}
        +\|\beta^{1/2}(\underline{u}-\underline{u}_h^1)\|_{0}
        + \|\tau_K^{1/2}\underline{\varPi}_0(\underline{u}_h^1-\widehat{\underline{u}}_h^1)\|_{0,\partial \Th}
        &\lesssim h\Theta_1,\\
    \label{stEner-ex2}
        \|\underline{u} - \underline{u}_h^1\|_{0}
        \lesssim
        \|\nabla (\underline{u} - \underline{u}_h^1)\|_{0}
        +\|{h_K^{-1/2}}(\underline{u}_h^1-\widehat{\underline{u}}_h^1)\|_{0,\partial \Th}
        &\lesssim
        \mu^{-1/2}h\Theta_1,
    \end{align}
    \end{subequations}
    where
    \begin{align}
    \label{theta2}
        \Theta_1:=
        \mu^{-1/2}|\dunderline{L}|_{1}
        + \mu^{-1/2}|p|_1
        + (\mu^{1/2}+\beta^{1/2}h) |\underline{u}|_2.
    \end{align}
    \red{Moreover, with the full elliptic regularity assumed in \eqref{stReg}, }
    we have the optimal $L_2$-convergence for $\underline{u}_h^1$:
    \begin{align}
    \label{st-dual-ex}
        \|\underline{u}-\underline{u}_h^1\|_0 \lesssim 
        h^2 (c_{reg} \Theta_2 + |\underline{u}|_2),
    \end{align}
    where
    \begin{align}
    \label{theta3}
        \Theta_2:=
        \mu^{-1/2}|\dunderline{L}|_{1}
        + \mu^{-1/2}|p|_1
        + \mu^{1/2} |\underline{u}|_2.
    \end{align}
\end{corollary}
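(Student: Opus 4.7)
The plan is to reduce the analysis to that of the mixed HDG scheme with projected jumps analyzed in \cite{qiu2016superconvergent} for incompressible Navier--Stokes. By Lemma \ref{lemma:stEq}, the scheme \eqref{stokesWeak} is algebraically the mixed HDG formulation with projected-jump stabilization, with no convective term and an additional reaction. Hence all projection-based arguments of \cite{qiu2016superconvergent} carry over; the present statement just packages their conclusions with explicit dependence on $\mu$ and $\beta$ (through the constants $\Theta_1$, $\Theta_2$, and $c_{reg}$).

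First I would set up a tailored projection $(\Pi_W \dunderline L, \Pi_V \underline u)$ onto $\dunderline W_h \times \underline V_h$ (componentwise $L_2$-projection suffices since the spaces are $\poly^0$ and $\poly^1$), together with the $L_2$-projection $\Pi^W p \in W_h^0$ and the facet projection $\underline{\varPi}_0 \underline u \in \underline M_h^0$. Standard approximation estimates give interpolation residuals of order $h|\dunderline L|_1$, $h^2 |\underline u|_2$, and $h|p|_1$, with matching trace bounds. Forming the error equations by subtracting the identities \eqref{stokesWeakX} satisfied by the projected exact solution from those satisfied by $(\dunderline L_h^1, \underline u_h^1, \widehat{\underline u}_h^1, p_h^1)$, testing with the discrete errors themselves (and with the sign flip $-\widehat{\underline v}_h$ on the facet), and using that the divergence-free constraint together with consistency of $\Pi^W$ cancels the pressure--divergence coupling, one obtains an energy identity controlling $\|\mu^{-1/2}(\dunderline L-\dunderline L_h^1)\|_0$, $\|\beta^{1/2}(\underline u - \underline u_h^1)\|_0$, and $\|\tau_K^{1/2}\underline{\varPi}_0(\underline u_h^1 - \widehat{\underline u}_h^1)\|_{0,\partial\Th}$ by the projection residuals. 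This yields \eqref{stEner-ex1} with $\Theta_1$ as in \eqref{theta2}. The $H^1$-like estimate \eqref{stEner-ex2} is then obtained as in Theorem \ref{theo:ellipErr}: test the first equation of \eqref{stokesWeakX} with $\dunderline G_h = \nabla(\underline u - \underline u_h^1)$ (plus the analogue for the projection) to bound $\|\nabla(\underline u - \underline u_h^1)\|_0$ by the $\dunderline L$-error and the stabilization term, then close with the discrete Poincar\'e inequality for piecewise $H^1$ functions with controlled jumps \cite{brenner2003poincare}.

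For the duality estimate \eqref{st-dual-ex}, I would solve the adjoint generalized Stokes problem with data $\underline u - \underline u_h^1$; elliptic regularity \eqref{stReg} provides an adjoint triple $(\dunderline Z, \underline z, q)$ with $\mu^{-1/2}\|\dunderline Z\|_1 + \mu^{-1/2}\|q\|_1 + (\mu^{1/2}+\beta^{1/2})\|\underline z\|_2 \lesssim c_{reg}\|\underline u - \underline u_h^1\|_0$. Using the projections of $(\dunderline Z, \underline z, q)$ as test functions in the error equations and combining with the energy estimate \eqref{stEner-ex1} produces the $h^2$ rate; the extra $h^2|\underline u|_2$ term in \eqref{st-dual-ex} comes from the consistency defect generated by projecting $\nabla \underline z$ onto $\dunderline W_h$ paired against $\underline u - \underline u_h^1$ (this is the classical nonconforming/Aubin--Nitsche contribution at the facet barycenters).

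The main obstacle is the dual argument: unlike the scalar case, one must simultaneously track $(\dunderline L, \underline u, \widehat{\underline u}, p)$ through the adjoint test, and, in particular, verify that the $L_2$-projection of the adjoint pressure preserves enough accuracy to deliver the extra $h$ needed for the $O(h^2)$ bound and that the divergence-free/pressure-robust structure of the projected-jump mixed HDG formulation permits the pressure--divergence cancellations. Once these are arranged exactly as in \cite{qiu2016superconvergent}, the three estimates \eqref{stEner-ex1}--\eqref{st-dual-ex} follow, and the constants collapse to the explicit forms $\Theta_1$ and $\Theta_2$ stated in \eqref{theta2}--\eqref{theta3}.
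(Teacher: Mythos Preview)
Your proposal is correct and matches the paper's approach: the paper does not give a proof but simply states that, by Lemma~\ref{lemma:stEq}, the scheme~\eqref{stokesWeak} coincides with the lowest-order mixed HDG scheme with projected jumps, and then refers to \cite{qiu2016superconvergent} for the details. Your sketch of the projection-based energy argument and the Aubin--Nitsche duality is exactly the content of that reference (adapted to include the reaction term), so you have in fact supplied more detail than the paper itself.
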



Now we are ready to present the optimal convergence results of the HDG-P0 scheme \eqref{stokesWeak0}. We note the proof procedures are essentially the same as in Theorem \ref{theo:ellipErr} and the detailed proof is omitted here for simplicity.

\begin{theorem}
    \label{theo:stokesErr}
    Let $(\dunderline{L}, \underline{u}, p)$ be the exact solution to the model problem \eqref{stokesModel}.
    Let $(\dunderline{L}_h, \underline{u}_h, \widehat{\underline{u}}_h)
    \in \dunderline{W}_h \times \underline{V}_h \times \underline{M}_{h}^0 \times W_h^0 $
    be the solution to \eqref{stokesWeak0}. Then there holds
    \begin{subequations}
    \begin{align}
    \label{st-ener-q1}
            \|{\mu}^{-1/2}(\dunderline{L} - \dunderline{L}_h)\|_{0}
            + \|\tau_K^{1/2}\underline{\varPi}_0(\underline{u}_h-\widehat{\underline{u}}_h)\|_{0,\partial \Th}&\le 
            h (\Theta_1 + \Xi)
            \\
    \label{st-ener-q2}
            \|\underline{u} - \underline{u}_h\|_0
            \lesssim \|\nabla (\underline{u} - \underline{u}_h)\|_{0}
            +\|{h_K^{-1/2}}(\underline{u}_h-\widehat{\underline{u}}_h)\|_{0,\partial \Th}
            &\lesssim
            \mu^{-1/2}h(\Theta_1 + \Xi), 
    \end{align}
    where $\Theta_1$ is defined in \eqref{theta2} and
    \[
        \Xi:=
        \mu^{-1/2}\beta(\|\underline{u}\|_0 + \mu^{-1/2}h\Theta_1)
        + \mu^{-1/2}h^{d/2}\|\underline{f}\|_{1,\infty}.
    \] 
    Moreover, assuming the full elliptic regularity in \eqref{stReg}, we have the following optimal $L_2$-convergence of $\underline{u}_h$:
    \begin{align}
    \label{st-dual-q}
        \|\underline{u}-\underline{u}_h\|_0 
        \lesssim 
        h^2\Psi,
    \end{align}
    \end{subequations}
    where
    \[
        \Psi:= c_{reg}(\Theta_2+\Xi)
        +|\underline{u}|_2
        +\frac{c_{reg}\mu^{-1/2}\beta}{\mu^{1/2}+\beta^{1/2}h}\Xi
        +\|\beta\|_{1,\infty}\mu^{-3/2}(\Theta_1+\Xi)
        +h^{d/2-1}\mu^{-1}\|\underline{f}-\beta\underline{u}\|_{1,\infty},
    \]
    and $\Theta_2$ is defined in \eqref{theta3}.
\end{theorem}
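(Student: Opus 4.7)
The strategy is to mimic the proof of Theorem \ref{theo:ellipErr} step-by-step, treating the extra pressure variable carefully via the discrete divergence-free facet condition, and then closing the $L_2$ estimate for the velocity via an Aubin--Nitsche duality argument adapted to the generalized Stokes system.

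First, I would introduce the auxiliary differences
\[
\dunderline{E}_L := \dunderline{L}_h - \dunderline{L}_h^1,\quad
\underline{e}_u := \underline{u}_h - \underline{u}_h^1,\quad
\underline{e}_{\widehat{u}} := \widehat{\underline{u}}_h - \widehat{\underline{u}}_h^1,\quad
e_p := p_h - p_h^1,
\]
and subtract \eqref{stokesWeak0} from \eqref{stokesWeak}, invoking Lemma \ref{lemma:stEq} to rewrite the exact-integration system in its ``rewritten'' form with $L_2$-projections on the skeleton. The only surviving inconsistency sits in the momentum equation and has the form $T_1(\underline{v}_h)-T_2(\underline{v}_h)$, where $T_1$ measures the quadrature error of $(\beta\underline{u}_h^1,\underline{v}_h)$ and $T_2$ measures that of $(\underline{f},\underline{v}_h)$. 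Applying Lemma \ref{lem:numIntErr} componentwise, exactly as in \eqref{xx1}, bounds $T_1(\underline{v}_h)-T_2(\underline{v}_h)$ by $\sum_K(h\|\beta\|_{1,\infty}\|\underline{u}_h^1\|_{0,K} + h^{1+d/2}\|\underline{f}\|_{1,\infty,K})\|\underline{v}_h\|_{1,K}$.

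Next, I would perform the energy identity by testing with $(\dunderline{G}_h,\underline{v}_h,\widehat{\underline{v}}_h,q_h)=(\dunderline{E}_L,\underline{e}_u,-\underline{e}_{\widehat{u}},e_p)$. The pressure terms drop out: the continuity equation \eqref{stW0-3} tested against $e_p$ combines with the pressure part of \eqref{stW0-4} tested against $-\underline{e}_{\widehat{u}}$, because $\widehat{\underline{u}}_h$ and $\widehat{\underline{u}}_h^1$ both satisfy the discrete facet divergence constraint. This yields
\[
\|\mu^{-1/2}\dunderline{E}_L\|_0^2
+\|\tau_K^{1/2}\underline{\varPi}_0(\underline{e}_u-\underline{e}_{\widehat{u}})\|_{0,\partial\Th}^2
+\|\beta^{1/2}\underline{e}_u\|_{0,h}^2
=T_1(\underline{e}_u)-T_2(\underline{e}_u),
\]
and the right-hand side is bounded as in the scalar case by $h\bigl(\mu^{-1/2}\|\beta\|_{1,\infty}\|\underline{u}_h^1\|_0+\mu^{-1/2}h^{d/2}\|\underline{f}\|_{1,\infty}\bigr)\bigl(\|\mu^{-1/2}\dunderline{E}_L\|_0+\|\tau_K^{1/2}\underline{\varPi}_0(\underline{e}_u-\underline{e}_{\widehat{u}})\|_{0,\partial\Th}\bigr)$. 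Combining with triangle inequality $\|\underline{u}_h^1\|_0\le\|\underline{u}\|_0+\|\underline{u}-\underline{u}_h^1\|_0$ and \eqref{stEner-ex1}--\eqref{stEner-ex2} of Corollary \ref{coro:Stokes} delivers \eqref{st-ener-q1}. For \eqref{st-ener-q2}, I would repeat the broken $H^1$-argument from the scalar proof: choose a componentwise test $\dunderline{G}_h=\nabla\underline{e}_u$ in the first error equation, integrate by parts elementwise, use Cauchy--Schwarz, the trace-inverse inequality, and the discrete Poincaré inequality for piecewise $H^1$ functions to bound $\|\nabla\underline{e}_u\|_0$ and $\|h_K^{-1/2}(\underline{e}_u-\underline{e}_{\widehat{u}})\|_{0,\partial\Th}$ by $\mu^{-1/2}(\|\mu^{-1/2}\dunderline{E}_L\|_0+\|\tau_K^{1/2}\underline{\varPi}_0(\underline{e}_u-\underline{e}_{\widehat{u}})\|_{0,\partial\Th})$, then combine with \eqref{stEner-ex2}.

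The main obstacle is the $L_2$-estimate \eqref{st-dual-q}, which will require a duality argument more delicate than the scalar case because of the saddle-point structure. I would introduce the adjoint Stokes problem with right-hand side $\underline{u}-\underline{u}_h$, invoke the full regularity \eqref{stReg} to control the adjoint solution by $c_{reg}\|\underline{u}-\underline{u}_h\|_0$, and then split $\|\underline{u}-\underline{u}_h\|_0\le\|\underline{u}-\underline{u}_h^1\|_0+\|\underline{e}_u\|_0$. The first piece is bounded via \eqref{st-dual-ex}. For the second piece I would test the adjoint with the discrete error, use Galerkin orthogonality modulo the quadrature inconsistency $T_1-T_2$, then re-apply Lemma \ref{lem:numIntErr} sharply (this time picking up the extra $h$ factor from the adjoint regularity) to produce the $h^2$ scaling. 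The pressure-pressure coupling from the compressibility defect of $\widehat{\underline{u}}_h$ contributes the term $\mu^{-1/2}\beta/(\mu^{1/2}+\beta^{1/2}h)\,\Xi$ in $\Psi$, and the elementwise $\|\underline{f}-\beta\underline{u}\|_{1,\infty}$ term arises from applying \eqref{numIntErr1} to the residual of the strong form at the discrete level. The $|\underline{u}|_2$ summand in $\Psi$ is inherited from \eqref{st-dual-ex}. Assembling these pieces yields \eqref{st-dual-q}, and as the authors note, the bookkeeping mirrors the scalar proof closely enough that only the adjoint-pressure handling is genuinely new.
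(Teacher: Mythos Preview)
Your proposal is correct and follows precisely the approach the paper indicates: the authors omit the detailed proof, stating only that ``the proof procedures are essentially the same as in Theorem \ref{theo:ellipErr},'' and your outline faithfully transposes that scalar argument (error equations, energy identity, broken $H^1$ bound via $\dunderline{G}_h=\nabla\underline{e}_u$, and the duality argument of the Appendix) to the Stokes setting. Your observation that the pressure contributions vanish in the energy test because both $\widehat{\underline{u}}_h$ and $\widehat{\underline{u}}_h^1$ satisfy the identical facet divergence constraint \eqref{stW0-3} is exactly the extra ingredient needed, and your sketch of the adjoint Stokes problem for \eqref{st-dual-q} correctly mirrors the Appendix proof of \eqref{dual-q}.
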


\begin{remark}[On pressure robustness]
Corollary \ref{coro:Stokes} and
Theorem \ref{theo:stokesErr} implies the 
HDG velocity approximation errors depend on the regularity of the pressure, which is not pressure-robust in the sense of \cite{Volker17}.
A source modification technique 
based on discrete Helmholtz decomposition 
was used in \cite{Linke14a, Linke14b}
 for the nonconforming CR discretization of the Stokes problem ($\beta=0$) to recover pressure-robust 
velocity approximations.
When $\beta>0$, a further modification of the mass term using a BDM interpolation \cite{Linke16x} was needed to recover pressure-robustness. 
Due to the equivalence of 
the proposed {\sf HDG-P0} scheme \eqref{stokesWeak0} and a nonconforming CR  
discretization (see Theorem \ref{theo:HDGP0=CR-stokes} below), 
similar modification can be used for
the {\sf HDG-P0} scheme to render it pressure-robust by changing the two volume integration terms in \eqref{stW0-2} with the following:
\begin{align*}
    Q_K^1(\underline f\cdot\underline v_h)
    \longrightarrow &\;
    Q_K^1(\underline f\cdot\underline{\Pi}_h^{BDM}\underline v_h),\\
    Q_K^1(\beta\underline u_h\cdot\underline v_h)
    \longrightarrow &\;
    Q_K^1\left(\beta\underline{\Pi}_h^{BDM}\left(\underline{\Pi}_h^{CR}\widehat{\underline u}_h\right)\cdot\underline{\Pi}_h^{BDM}\underline v_h\right),
\end{align*}
where $\underline{\Pi}_h^{BDM}$ is the classical BDM interpolation into the space 
$V_h\cap H(\mathrm{div};\Omega)$, and 
$\underline{\Pi}_h^{CR}$ is the interpolation from $\underline{M}_h$ to $\underline{V}_h^{CR}$ given in \eqref{v-picr} below.
Optimal pressure-robust velocity error estimates can be obtained for this modified HDG scheme following the work \cite{Linke14b}.
\end{remark}

\subsection{Equivalence to CR discretization}
Similar to the reaction-diffusion case, a bijective interpolation operator $\underline{\Pi}_{h}^{CR}: \underline{M}_{h}\rightarrow\underline{V}_{h}^{CR}$ from the facet space $\underline{M}_h$ to the CR space $\underline{V}_h^{CR}$ is defined such that:
\begin{align}
\label{v-picr}
    \underline{\Pi}_h^{CR}\widehat{\underline{v}}_h(m_F)
    = \widehat{\underline{v}}_h(m_F), \quad \forall\widehat{\underline{v}}_h\in \underline{M}_h,\; F\in\Eh.
\end{align}
Then we have the following equivalence result between the {\sf HDG-P0} scheme \eqref{stokesWeak0} and a (slightly modified) CR discretization for the generalized Stokes problem:
\begin{theorem}
    \label{theo:HDGP0=CR-stokes}
    Let $(\underline{u}_h^{CR}, p_h^{CR})\in \underline{V}_h^{CR,0}\times W_h^0$ be the solution to the following nonconforming scheme:
    \begin{subequations}
    \label{stCr}
    \begin{align}
        \sum_{K\in\Th}\left(Q_K^0(\mu\nabla \underline{u}_h^{CR}\cdot\nabla \underline{v}_h^{CR})
        + Q_K^1(\gamma_h\beta \underline{u}_h^{CR} \cdot \underline{v}_h^{CR})
        - Q_K^0(p_h^{CR} \nabla\cdot\underline{v}_h^{CR})
        \right)
        =& \sum_{K\in\Th}
        Q_K^1(\gamma_h \underline{f}\cdot\underline{v}_h^{CR}),
        \\
        \sum_{K\in\Th}\left(Q_K^0(q_h\nabla\cdot\underline{u}_h^{CR}) \right)
        =& 0,
    \end{align}
    \end{subequations}
    for all $(\underline{v}_h^{CR}, q_h)\in \underline{V}_h^{CR,0}\times W_h^0$, where $\gamma_h:= \frac{\mu}{\mu+ \frac{h_K^2}{d+1}\beta}$.
    Then the solution $(\dunderline{L}_h, \underline{u}_h, \widehat{\underline{u}}_h, p_h)$ to the {\sf HDG-P0} scheme \eqref{stokesWeak0}
    satisfies
    \begin{subequations}
    \label{stCrc}
    \begin{align}
    \label{stCr0}
        \dunderline{L}_h =&\; -\mu\nabla\underline{\Pi}_h^{CR} \widehat{\underline{u}}_h, \\
    \label{stCr1}
        \underline{u}_h(m_K^i) =&\; 
        \gamma_h(m_K^i) \left(\widehat{\underline{u}}_h(m_K^i)
        + \frac{(h_K^i)^2}{(d+1)\mu}\underline{f}(m_K^i)\right),\quad
        \forall i\in\{1, \dots, d+1\}, \forall K\in \Th,\\
    \label{stCr2}
        \underline{\Pi}_h^{CR} \widehat{\underline{u}}_h =&\; \underline{u}_h^{CR},\\
    \label{stCr3}
        p_h =&\; p_h^{CR},
    \end{align}
    where $m_K^i$ is the barycenter of the $i$-th facet $F^i$ of the element $K$, and $h_K^i=\frac{|K|}{|F^i|}$.
    \end{subequations}
\end{theorem}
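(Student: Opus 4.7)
The plan is to mirror the proof of Theorem \ref{theo:HDGP0=CR-ellip} step by step, treating the pressure and divergence constraint as two additional ingredients.

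First, I would derive \eqref{stCr0} from \eqref{stW0-1} by the same argument used to obtain \eqref{cr0} in the elliptic case: since $\dunderline{G}_h \in \dunderline{W}_h$ is piecewise constant, $Q_K^0(\mu^{-1}\dunderline{L}_h : \dunderline{G}_h) = (\mu^{-1}\dunderline{L}_h, \dunderline{G}_h)_K$; using the definition \eqref{v-picr} and integration by parts (with $\nabla\cdot\dunderline{G}_h\equiv 0$) we get
$Q_{\partial K}^0(\dunderline{G}_h\underline{n}\cdot\widehat{\underline{u}}_h) = \langle \underline{\Pi}_h^{CR}\widehat{\underline{u}}_h, \dunderline{G}_h\underline{n}\rangle_{\partial K} = (\nabla\underline{\Pi}_h^{CR}\widehat{\underline{u}}_h, \dunderline{G}_h)_K$,
so $\dunderline{L}_h = -\mu_h\nabla\underline{\Pi}_h^{CR}\widehat{\underline{u}}_h$ since $\mu$ is constant (or by an analogous $L_2$-projection as in the elliptic case).

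Second, I would derive \eqref{stCr1} from \eqref{stW0-2} by choosing vector test functions $\underline{v}_h$ supported on a single element $K$, with value equal to a unit coordinate vector at exactly one barycenter $m_K^i$ and zero at all other barycenters. This gives the componentwise equation $\tau_K^i|F^i|(\underline{u}_h(m_K^i) - \widehat{\underline{u}}_h(m_K^i)) + \frac{|K|}{d+1}\beta(m_K^i)\underline{u}_h(m_K^i) = \frac{|K|}{d+1}\underline{f}(m_K^i)$, and solving for $\underline{u}_h(m_K^i)$ produces \eqref{stCr1}.

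Third, I would convert \eqref{stW0-3} into the CR divergence constraint. Since $q_h\in W_h^0$ is piecewise constant, $\nabla q_h = 0$ on each $K$, and integration by parts yields
\begin{align*}
Q_{\partial K}^0(\widehat{\underline{u}}_h\cdot\underline{n}_K q_h) = \langle \underline{\Pi}_h^{CR}\widehat{\underline{u}}_h\cdot\underline{n}_K, q_h\rangle_{\partial K} = (\nabla\cdot\underline{\Pi}_h^{CR}\widehat{\underline{u}}_h, q_h)_K,
\end{align*}
so $\underline{\Pi}_h^{CR}\widehat{\underline{u}}_h$ satisfies exactly the discrete divergence equation in \eqref{stCr}. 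Then, taking \eqref{stW0-4}, I would substitute \eqref{stCr0}–\eqref{stCr1} into each term: the stress term becomes $-Q_K^0(\mu\nabla\underline{\Pi}_h^{CR}\widehat{\underline{u}}_h : \nabla\underline{\Pi}_h^{CR}\widehat{\underline{v}}_h)$ by the same integration-by-parts argument used above; the pressure term becomes $(p_h, \nabla\cdot\underline{\Pi}_h^{CR}\widehat{\underline{v}}_h)_K$ (using that $p_h$ is piecewise constant, hence $\nabla p_h=0$, and the facet identity for $\underline{\Pi}_h^{CR}$); and the stabilization term becomes $-Q_K^1(\gamma_h\beta\underline{\Pi}_h^{CR}\widehat{\underline{u}}_h\cdot \underline{\Pi}_h^{CR}\widehat{\underline{v}}_h) + Q_K^1(\gamma_h\underline{f}\cdot \underline{\Pi}_h^{CR}\widehat{\underline{v}}_h)$ by the same algebraic manipulation that produced \eqref{hdgX} in the elliptic case. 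Summing over $K$ gives exactly the first equation of \eqref{stCr} for the pair $(\underline{\Pi}_h^{CR}\widehat{\underline{u}}_h, p_h)$. Invoking uniqueness of the (modified) CR Stokes system then yields \eqref{stCr2} and \eqref{stCr3}.

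The main obstacle I anticipate is the bookkeeping for the pressure contribution: one must verify that the facet quadrature $Q_{\partial K}^0(p_h\dunderline{I}\underline{n}_K\cdot\widehat{\underline{v}}_h)$ equals the CR volume term $(p_h, \nabla\cdot\underline{\Pi}_h^{CR}\widehat{\underline{v}}_h)_K$ exactly, which relies simultaneously on $p_h\in W_h^0$ being piecewise constant and on the facet-barycenter interpolation property of $\underline{\Pi}_h^{CR}$; and that the divergence constraint inherited from \eqref{stW0-3} together with the zero-mean condition of $W_h^0$ is compatible with the standard CR inf-sup pair so that uniqueness (up to a pressure constant fixed by $W_h^0$) applies. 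Once those two identifications are in place, the remaining computations are direct analogues of the reaction-diffusion proof.
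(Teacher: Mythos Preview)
Your proposal is correct and follows essentially the same route as the paper's proof: the paper likewise refers back to Theorem~\ref{theo:HDGP0=CR-ellip}, derives \eqref{stCr0}--\eqref{stCr1} exactly as you describe, converts \eqref{stW0-3}--\eqref{stW0-4} via the facet-barycenter identity for $\underline{\Pi}_h^{CR}$ and integration by parts (handling the pressure term together with $\dunderline{L}_h$ in the combined expression $Q_{\partial K}^0((\dunderline{L}_h+p_h\dunderline{I})\underline{n}_K\cdot\widehat{\underline{v}}_h)=(\dunderline{L}_h+p_h\dunderline{I},\nabla\underline{\Pi}_h^{CR}\widehat{\underline{v}}_h)_K$), and then identifies the resulting condensed system with \eqref{stCr} to conclude \eqref{stCr2}--\eqref{stCr3}. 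The pressure bookkeeping you flag as the main obstacle is exactly the computation the paper carries out, and the uniqueness step is implicit there as well.
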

\begin{proof}
    The proof procedure is the same as in Theorem \ref{theo:HDGP0=CR-ellip} and we only sketch the main steps here.
    By the definition of $\underline{\Pi}_h^{CR}$ and integration by parts, we get from \eqref{stW0-1}, for all $\dunderline{G}_h\in\dunderline{W}_h$,
    \begin{align*}
        \sum_{K\in\Th} \left(Q_K^0 (\mu^{-1}\dunderline{L}_h\cdot \dunderline{G}_h) 
        + Q_{\partial{K}}^0 ( \dunderline{G}_h\underline{n} \cdot \widehat{\underline{u}}_h ) \right)
        &= \sum_{K\in\Th} Q_K^0 \left(
        (\mu^{-1}\dunderline{L}_h + \nabla\underline{\Pi}_h^{CR}\widehat{\underline{u}}_h ) \cdot \dunderline{G}_h  \right) \\
        &= (\mu^{-1}\dunderline{L}_h + \nabla\underline{\Pi}_h^{CR}\widehat{\underline{u}}_h, \dunderline{G}_h)_{\Th} = 0,
    \end{align*}
    where we used the fact that $Q_K^0$ is exact for $\poly^1(K)$ and $Q_{F}^0$ is exact for $\poly^1(F)$ for all $F\in\partial K$. The identity \eqref{stCr0} follows thanks to $(\mu^{-1}\dunderline{L}_h + \nabla\underline{\Pi}_h^{CR}\widehat{\underline{u}}_h)\in\dunderline{W}_h$.
    
    Next, for $i\in \{1, \dots, d+1\}$, taking test function $\dunderline{v}_h$ in \eqref{stW0-2} to be supported only on
    an element $K$, whose values is $1$ on $m_K^i$ and zero
    on $m_K^j$ for $j\not=i$. By the definition of $Q_{\partial K}^0$ and $Q_K^1$, we get
    \begin{align*}
        \tau_K^i(\underline{u}_h(m_K^i)- \widehat{\underline{u}}_h(m_K^i))|F^i|
        +\frac{|K|}{d+1}\beta(m_K^i)\underline{u}_h(m_K^i)
        = \frac{|K|}{d+1}\underline{f}(m_K^i).
    \end{align*}
    We immediately get the equality \eqref{stCr1} by solving for the value $\underline{u}_h(m_K^i)$ and using the definition of $\tau_K$ and $h_K$.
    
    Finally, by the definition of $\underline{\Pi}_h^{CR}$, $Q_{\partial K}^0$ and $Q_{K}^1$, the identities \eqref{stCr0}-\eqref{stCr1}, and integration by parts, we have:
    \begin{align*}
         Q_{\partial K}^0 ( (\dunderline{L}_h + p_h\dunderline{I})\underline{n}_K\cdot\widehat{\underline{v}}_h)
         =& ((\dunderline{L}_h + p_h\dunderline{I}), \nabla\underline{\Pi}_h^{CR}\widehat{\underline{v}}_h)_K \\
         =& - Q_K^0 (\mu\nabla\underline{\Pi}_h^{CR}\widehat{\underline{u}}_h \cdot \nabla\underline{\Pi}_h^{CR}\widehat{\underline{v}}_h) 
         + Q_K^0 (p_h \nabla\underline{\Pi}_h^{CR}\widehat{\underline{v}}_h),
         \\
         Q_{\partial K}^0 ( \tau_K (\underline{u}_h - \widehat{\underline{u}}_h)\cdot \widehat{\underline{v}}_h)
         =& - Q_K^1(\gamma_h\beta \underline{\Pi}_h^{CR}\widehat{\underline{u}}_h \cdot \underline{\Pi}_h^{CR}\widehat{\underline{v}}_h)
         + Q_K^1(\gamma_h \underline{f}\cdot\underline{\Pi}_h^{CR}\widehat{\underline{v}}_h),
         \\
         Q_{\partial K}^0 ( \widehat{\underline{u}}_h\cdot\underline{n}_K q_h)
         =& \langle \underline{\Pi}_h^{CR}\widehat{\underline{u}}_h\cdot \underline{n}_K, q_h
         \rangle_{\partial K} \\
         =& Q_K^0(q_h \nabla\cdot\underline{\Pi}_h^{CR}\widehat{\underline{u}}_h),
    \end{align*}
    where we used the fact that $Q_K^0$ is exact for $\poly^0(K)$ and $Q_{F}^0$ is exact for $\poly^1(F)$ for all $F\in\partial K$. By plugging the above identities into \eqref{stW0-3}-\eqref{stW0-4}, we get the condensed {\sf HDG-P0} scheme: 
    \begin{subequations}
        \label{stCondense}
        \begin{align}
            \sum_{K\in\Th}\bigl(Q_K^0(\mu\nabla \underline{\Pi}_h^{CR}\widehat{\underline{u}}_h\cdot\nabla \underline{\Pi}_h^{CR}\widehat{\underline{v}}_h)
            + Q_K^1(\gamma_h\beta \underline{\Pi}_h^{CR}\widehat{\underline{u}}_h \cdot \underline{\Pi}_h^{CR}\widehat{\underline{v}}_h)&\\
            \nonumber
            - Q_K^0(p_h \nabla\cdot\underline{\Pi}_h^{CR}\widehat{\underline{v}}_h)
            \bigr)
            &= \sum_{K\in\Th}
            Q_K^1(\gamma_h \underline{f}\cdot\underline{\Pi}_h^{CR}\widehat{\underline{v}}_h),
            \\
            \sum_{K\in\Th}\left(Q_K^0(q_h\nabla\cdot\underline{\Pi}_h^{CR}\widehat{\underline{u}}_h) \right)
            &= 0,
        \end{align}
    \end{subequations}
    for all $(\widehat{\underline{v}}_h, q_h)\in\underline{M}_h^0\times W_h^0$, which implies the result \eqref{stCr2}-\eqref{stCr3}.
\end{proof}

Same as in Section \ref{subsec:ellEq} for the reaction-diffusion equation, the above result demonstrates the equivalence between the condensed linear system of {\sf HDG-P0} for the generalized Stokes equations \eqref{stCondense} and a slightly modified CR discretization \eqref{stCr} with numerical integration, where the modification is introduced by the scaling parameter $\gamma_h$. 
In practice we first locally eliminate $\dunderline{L}_h$ and $\underline{u}_h$ from the linear system of {\sf HDG-P0} \eqref{stokesWeak0} to arrive at the condensed system \eqref{stCondense}, and then recover these local variables after solving for $\widehat{\underline{u}}_h$ and 
$p_h$ from \eqref{stCondense}.

\begin{remark}[On multigrid algorithm for \eqref{stCondense}]
\label{rk:saddle}
The saddle-point structure \eqref{stCondense} creates extra difficulty in design robust multigrid solvers for the system.
In the literature, there are three approach to construct a convergent multigrid algorithm for the nonconforming scheme 
\eqref{stCr}, which is equivalent to the condensed system \eqref{stCondense}.
The first approach \cite{brenner1990nonconforming, turek1994multigrid, stevenson1998cascade}
explores the fact that the CR discretization produces
a cell-wise divergence free velocity approximation
\[
\underline u_h^{CR}\in \underline{Z}_h^{CR,0}:=\{\underline{v}_h\in \underline V_h^{CR,0}: \; \nabla\cdot\underline v_h|_{K} = 0, \forall K\in \Th\},
\]
and applies multigrid algorithms for the positive definite system on the divergence-free kernel space $\underline{Z}_h^{CR,0}$.
This approach is restricted to two dimensions only, which is 
closely related to multigrid for the nonconforming Morley scheme for the Biharmonic equation \cite{Brenner89X}.
Its extension to three dimensions is highly nontrivial due to the need of constructing (highly complex) intergrid transfer operators between these divergence-free subspaces.
The second approach proposed by Brenner 
\cite{brenner1993nonconforming, brenner1994nonconforming} directly works with the saddle point system (with a penalty term), which, however, cannot be applied to the positive definite Schur complement subsystem. The third approach proposed by Sch\"oberl \cite{schoberl1999multigrid, schoberl1999robust} also works with a penalty formulation of the saddle point system, where a multigrid theory was applied to the positive definite subsystem for the velocity approximation. 
The key ingredients in \cite{schoberl1999multigrid,schoberl1999robust} are (i) a robust intergrid transfer operator that transfer coarse-grid divergence-free functions to fine-grid (nearly) divergence-free functions,  and (ii) a robust block-smoother capturing the divergence free basis functions. 
This approach is attractive in three dimensions as the intergrid transfer operator does not need to directly work with the divergence-free kernel space, and is much easier to realize in practice than the first approach.
Sch\"oberl's approach was originally introduced for the 
$P^2/P^0$ discretization on triangles, 
it has been applied to other finite element schemes in two- and three-dimensions by other researchers; see, e.g.,  \cite{Lee09, hong2016robust, Kanschat15,farrell2019augmented, Farrell20}.
In the next subsection, we follow the last approach to 
provide a robust multigrid algorithm for the system \eqref{stCondense} in combination with an augmented Lagrangian Uzawa iteration method \cite{fortin2000augmented, uzawa1958iterative}.
\end{remark}



\subsection{Multigrid-based augmented Lagrangian Uzawa iteration}
We use the same notations for the hierarchical meshes and finite element spaces as in Section \ref{subsec:ellipMG}. With slight abuse of notations, we define the following $L_2$-like inner product on the vector facet space $\underline{M}_l$:
\[
    (\widehat{\underline{u}}_{l},\; \widehat{\underline{v}}_{l})_{0, l}
    :=\sum_{K\in\mathcal{T}_l}Q_K^1(\underline{\Pi}_l^{CR}\widehat{\underline{u}}_{l},\; \underline{\Pi}_l^{CR}\widehat{\underline{v}}_l)
    = \sum_{K\in\mathcal{T}_l}\sum_{i=1}^{d+1}\frac{|K|}{d+1}\widehat{\underline{u}}_l(m_K^i)\;\widehat{\underline{v}}_l(m_K^i),
\]
with the induced norm $\|\cdot\|_{0, l}$, where $\underline{\Pi}_l^{CR}:\underline{M}_l\rightarrow\underline{V}_l^{CR}$ is the interpolation operator \eqref{v-picr} on $l$-th mesh level.
The pressure space $W_l$ is equipped with the standard $L_2$-norm, which we denote as 
\[
[ p_l ,q_l ]_{0,l} := 
\sum_{K\in\mathcal{T}_l}Q_K^0(p_lq_l)\quad \forall p_l,q_l\in W_l.
\]
We define the following linear operators ${\underline{A}}_{l}:\underline{M}_l^0\rightarrow\underline{M}_l^0$, and
$\underline{B}_l:\underline{M}_l^0 \rightarrow W_h$ 
\begin{alignat*}{2}
    (\underline{A}_l \widehat{\underline{u}}_l, \widehat{\underline{v}}_l)_{0,l}
    :=& \underline{a}_l(\widehat{\underline{u}}_l, \widehat{\underline{v}}_l), 
    \quad&& \forall \widehat{\underline{u}}_l, \widehat{\underline{v}}_l\in\underline{M}_l^0,
    \\
        [ \underline{B}_l \widehat{\underline{u}}_l, {{q}}_l]_{0,l}
    :=& \underline{b}_l(\widehat{\underline{u}}_l, q_l), 
    \quad&& \forall \widehat{\underline{u}}_l\in\underline{M}_l^0,
    q_l\in W_l^0, 
\end{alignat*}
where $\underline{a}_l, \underline{b}_l$ are the following bilinear forms on the $l$-th mesh level:
\begin{alignat*}{1}
    \underline{a}_l(\widehat{\underline{u}}_l, \widehat{\underline{v}}_l)
    :=& \sum_{K\in\mathcal{T}_l}\left(Q_K^0(\mu\nabla \underline{\Pi}_l^{CR}\widehat{\underline{u}}_l\cdot\nabla \underline{\Pi}_l^{CR}\widehat{\underline{v}}_l)
    + Q_K^1(\beta_l \underline{\Pi}_l^{CR}\widehat{\underline{u}}_l \cdot \underline{\Pi}_l^{CR}\widehat{\underline{v}}_l)
    \right),
    \quad \forall \widehat{\underline{u}}_l, \widehat{\underline{v}}_l\in\underline{M}_l^0, 
    \\
        \underline{b}_l(\widehat{\underline{u}}_l, q_l)
    :=& \sum_{K\in\mathcal{T}_l} Q_K^0(q_l \nabla\cdot\underline{\Pi}_l^{CR}\widehat{\underline{u}}_l), 
    \quad \forall \widehat{\underline{u}}_l\in\underline{M}_l^0, 
    q_l\in W_l,
\end{alignat*}
with $\beta_l(m_K^i):= \gamma_l(m_K^i)\beta(m_K^i)$
for all $K\in \mathcal{T}_l$.
Further denoting 
$\underline{B}^\ast_l:W_l\rightarrow \underline{M}_l^0$ as the transpose of $\underline{B}_l$ with respect to the $L_2$-inner products:
\[
(\underline{B}^\ast_l p_l, \widehat{\underline{v}}_l)_{0,l}
:=[p_l, \underline{B}_l\widehat{\underline{v}}_l]_{0,l}, \quad 
\forall p_l\in W_l, \widehat{\underline{u}}_l\in\underline{M}_l^0,
\]
and $\underline{f}_l\in\underline{M}_l^0$ such that
\[
    (\underline{f}_l,\;\widehat{\underline{v}}_l)_{0,l}
    := \sum_{K\in\mathcal{T}_l}
    Q_K^1(\gamma_l \underline{f}\cdot\underline{\Pi}_l^{CR}\widehat{\underline{v}}_l), \quad\forall\widehat{\underline{v}}_l\in\underline{M}_l^0.
\]
Then the operator form of the system \eqref{stCondense} is to find $(\widehat{\underline{u}}_J,p_J)\in\underline{M}_J^0\times W_J^0$ 
satisfying:
\begin{subequations}
    \label{stOpEq}
    \begin{align}
        \underline{A}_J \widehat{\underline{u}}_J + \underline{B}_J^\ast p_J =& \underline{f}_J,
        \\
        \label{stOpEq-2}
        \underline{B}_J \widehat{\underline{u}}_J =& 0.
\end{align}
\end{subequations}

Similar to \cite{lee2007robust,hong2016robust}, we apply the augmented Lagrangian Uzawa iteration method \cite{uzawa1958iterative, fortin2000augmented} to the above saddle-point system \eqref{stOpEq}, which solves the following  (equivalent) augmented Lagrangian formulation of \eqref{stOpEq} iteratively using the Uzawa method
\begin{subequations}
    \label{stOpEq-AL}
    \begin{align}
        \underbrace{(\underline{A}_J+\epsilon^{-1} 
        \underline{B}_J^\ast\underline{B}_J)}_{\underline{A}_J^\epsilon} \widehat{\underline{u}}_J + \underline{B}_J^\ast p_J =& \underline{f}_J,
        \\
        \underline{B}_J \widehat{\underline{u}}_J =& 0.
\end{align}
\end{subequations}
The Uzawa method for \eqref{stOpEq-AL} with damping parameter $\epsilon^{-1}\gg 1$ reads as follows:
Start with $p_J^0=0$,  for $k = 1, 2, \cdots$, find 
$(\widehat{\underline{u}}_J^{k},p_J^{k})\in\underline{M}_J^0\times W_J^0$ such that 
\begin{subequations}
    \label{stOpEq-ALU}
    \begin{align}
    \label{stOpEq-ALU1}
\underline{A}_J^\epsilon\, \widehat{\underline{u}}_J^{k}  =& \;\underline{f}_J- \underline{B}_J^\ast p_J^{k-1},
        \\
    \label{stOpEq-ALU2}
p_J^{k} =& \;   p_J^{k-1}- \epsilon^{-1}  \underline{B}_J \widehat{\underline{u}}_J^{k}.
\end{align}
\end{subequations}

Here the singularly perturbed operator 
$\underline{A}_J^\epsilon$ is associated with the bilinear form
\[
    \underline{a}^\epsilon_l(\widehat{\underline{u}}_l,\; \widehat{\underline{v}}_l)
    := \underline{a}_l(\widehat{\underline{u}}_l,\; \widehat{\underline{v}}_l)
    + \sum_{K\in\mathcal{T}_l}Q_K^0(
    \epsilon^{-1}\nabla\cdot\underline{\Pi}_l^{CR}\widehat{\underline{u}}_l \;  \nabla\cdot\underline{\Pi}_l^{CR}\widehat{\underline{v}}_l).
\]

We quote a convergence result 
from \cite[Lemma 2.1]{lee2007robust}
for the above Uzawa method \eqref{stOpEq-ALU}.
\begin{lemma}[Lemma 2.1 of \cite{lee2007robust}]
    \label{lem:uzawaConverge}
    Let $(\widehat{\underline{u}}_J, p_J)\in\underline{M}_J^0\times W_J^0$ be the solution of \eqref{stOpEq}, and let 
    $(\widehat{\underline{u}}_J^k, p_J^k)$
    be the $k$-th Uzawa iteration solution to \eqref{stOpEq-ALU}. 
Then the following estimate holds:
    \begin{align*}
        \|\widehat{\underline{u}}_J^{k} - \widehat{\underline{u}}_J\|_{A_J}
        \lesssim&
        \sqrt{\epsilon}\|p_J^k - p_J\|_0\;
        \lesssim\;
        \sqrt{\epsilon}\bigl(\frac{\epsilon}{\epsilon+\mu_0}\bigr)^{k}
        \|p_J\|_{0},
    \end{align*}
    where $\mu_0$ is the 
    minimal eigenvalue of the Schur complement
    $S_J=\underline B_J\underline A_J^{-1}\underline B^\ast_J$, which is independent of the mesh size.
\end{lemma}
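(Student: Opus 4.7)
The plan is to derive an error recursion for the pressure iterate and then bound the velocity error in terms of the pressure error. Since the exact solution satisfies $\underline{B}_J\widehat{\underline{u}}_J=0$, we have $\underline{A}_J^\epsilon\widehat{\underline{u}}_J = \underline{A}_J\widehat{\underline{u}}_J$, so $(\widehat{\underline{u}}_J,p_J)$ also solves the augmented system \eqref{stOpEq-AL}. Subtracting the Uzawa step \eqref{stOpEq-ALU1} from this augmented equation gives the velocity error identity
\[
\widehat{\underline{u}}_J^k-\widehat{\underline{u}}_J = -(\underline{A}_J^\epsilon)^{-1}\underline{B}_J^\ast(p_J^{k-1}-p_J),
\]
and then substituting into the pressure update \eqref{stOpEq-ALU2} and using $\underline{B}_J\widehat{\underline{u}}_J=0$ yields
\[
p_J^k-p_J = \bigl(I - \epsilon^{-1}S_J^\epsilon\bigr)(p_J^{k-1}-p_J), \qquad S_J^\epsilon := \underline{B}_J(\underline{A}_J^\epsilon)^{-1}\underline{B}_J^\ast.
\]

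Next I would apply the Sherman--Morrison--Woodbury identity to $(\underline{A}_J+\epsilon^{-1}\underline{B}_J^\ast \underline{B}_J)^{-1}$ to obtain
\[
S_J^\epsilon = \epsilon\,S_J(\epsilon I + S_J)^{-1}, \qquad
I - \epsilon^{-1}S_J^\epsilon = \epsilon(\epsilon I + S_J)^{-1}.
\]
Because $S_J$ is SPD on $W_J^0$ with minimal eigenvalue $\mu_0$, the operator $\epsilon(\epsilon I + S_J)^{-1}$ is SPD with spectral radius at most $\epsilon/(\epsilon+\mu_0)$. Iterating from $p_J^0=0$ gives
\[
\|p_J^k-p_J\|_0 \le \bigl(\tfrac{\epsilon}{\epsilon+\mu_0}\bigr)^k \|p_J\|_0,
\]
which is the second inequality in the statement.

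For the first inequality, using the velocity error identity,
\[
\|\widehat{\underline{u}}_J^k-\widehat{\underline{u}}_J\|_{\underline{A}_J}^2 \le \|\widehat{\underline{u}}_J^k-\widehat{\underline{u}}_J\|_{\underline{A}_J^\epsilon}^2
= \bigl((p_J^{k-1}-p_J),\, S_J^\epsilon(p_J^{k-1}-p_J)\bigr)_{0,J}
\le \epsilon\,\|p_J^{k-1}-p_J\|_0^{\,2},
\]
where the final bound uses $\|S_J^\epsilon\|\le \epsilon$, which is immediate from the spectral representation $S_J^\epsilon=\epsilon S_J(\epsilon I+S_J)^{-1}$. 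Combining this with the pressure recursion (which also gives $\|p_J^{k-1}-p_J\|_0\lesssim \|p_J^k-p_J\|_0$ when $\epsilon\lesssim \mu_0$, absorbing constants) produces $\|\widehat{\underline{u}}_J^k-\widehat{\underline{u}}_J\|_{\underline{A}_J}\lesssim \sqrt{\epsilon}\,\|p_J^k-p_J\|_0$.

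The main obstacle is the mesh-independence of $\mu_0$: the lower bound on the spectrum of $S_J=\underline{B}_J\underline{A}_J^{-1}\underline{B}_J^\ast$ is precisely the squared inf-sup constant of the bilinear form $\underline{b}_l$ on the pair $\underline{M}_J^0\times W_J^0$. Via the isomorphism $\underline{\Pi}_J^{CR}$ established in Theorem \ref{theo:HDGP0=CR-stokes}, this reduces to the classical inf-sup stability of the nonconforming $P^1$--$P^0$ Crouzeix--Raviart pair for the Stokes problem, which holds uniformly in $h$. All other steps are standard algebra once this uniform inf-sup bound is in hand.
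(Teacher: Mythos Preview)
The paper does not prove this lemma at all; it is simply quoted from \cite{lee2007robust} with the attribution ``Lemma 2.1 of \cite{lee2007robust}''. Your reconstruction is the standard argument for augmented-Lagrangian Uzawa convergence, and the main steps---the error recursion, the Sherman--Morrison--Woodbury computation giving $I-\epsilon^{-1}S_J^\epsilon=\epsilon(\epsilon I+S_J)^{-1}$, and the spectral bound via the inf--sup constant of the CR pair---are all correct.

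One small point: your passage from $\sqrt{\epsilon}\,\|p_J^{k-1}-p_J\|_0$ to $\sqrt{\epsilon}\,\|p_J^k-p_J\|_0$ is not uniform in $\epsilon$. The iteration operator $\epsilon(\epsilon I+S_J)^{-1}$ has eigenvalues in $[\epsilon/(\epsilon+\mu_1),\,\epsilon/(\epsilon+\mu_0)]$, so the reverse bound $\|p_J^{k-1}-p_J\|_0\le \tfrac{\epsilon+\mu_1}{\epsilon}\|p_J^k-p_J\|_0$ blows up as $\epsilon\to 0$; your parenthetical invokes $\mu_0$ where the maximal eigenvalue $\mu_1$ is needed. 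The first $\lesssim$ in the displayed chain therefore does not hold with an $\epsilon$-independent constant as written. This is a cosmetic issue with how the intermediate term is displayed (and arguably an imprecision already in the quoted statement): the substantive conclusion $\|\widehat{\underline u}_J^k-\widehat{\underline u}_J\|_{\underline A_J}\lesssim \sqrt{\epsilon}\bigl(\tfrac{\epsilon}{\epsilon+\mu_0}\bigr)^{k-1}\|p_J\|_0$ follows directly from your velocity-error identity together with $\|S_J^\epsilon\|\le \epsilon$, and differs from the rightmost bound only by a single harmless factor of $\tfrac{\epsilon+\mu_0}{\epsilon}$ in the exponent count.
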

\begin{remark}[On practical choice of $\epsilon$]
    \label{remark:uzawaEps}
    The first step Uzawa iteration solution 
    $(\widehat{\underline{u}}_J^1, p_J^1)$
    is simply the penalty method applied to \eqref{stOpEq}
    where a small mass term $[\epsilon p_l, q_l]_{0,l}$
    is subtracted from the continuity equation \eqref{stOpEq-2}.
    The above convergence result indicates to take 
    $0<\epsilon \ll 1$ for faster convergence in terms 
    of Uzawa iteration counts.
    However, taking $\epsilon$ extremely small will leads to
    round-off issues as the non-zero matrix entries in 
    $\underline A_J$ is of $\mathcal{O}(1)$ while that of 
    $\epsilon^{-1}\underline B_J^\ast \underline B_J$ is of $\mathcal{O}(\epsilon^{-1})$.
    In our numerical experiments, the machine precision is $10^{-16}$, and we use $\epsilon=10^{-8}$  and perform one Uzawa iteration.
\end{remark}

Since the pressure space $W_J$ is a discontinuous piecewise constant space, there is no equation solve needed for \eqref{stOpEq-ALU2}. Hence the major computational cost of a Uzawa iteration is in solving velocity from \eqref{stOpEq-ALU1}, for which we use a multigrid
following \cite{schoberl1999multigrid, schoberl1999robust}.
As mentioned in Remark \ref{rk:saddle}, two key ingredients of a robust multigrid algorithm for the system \eqref{stOpEq-ALU1} are a robust intergrid transfer operator and a robust block smoother.

It turns out that a vectorial version of the averaging 
intergrid transfer operator, denoted as $\underline{I}_{l-1}^l$,  used in \eqref{intergridOpr}
is not robust for $\epsilon\ll 1$.
Here we stabilize this averaging operator with a local correction using discrete harmonic extensions \cite{schoberl1999multigrid}, which takes care of the coarse-grid divergence.
Denoting $M_{l,T}^0$ as the local subspace of 
$M_l^0$ whose DOFs vanishes on the $(l-1)$-th level mesh skeleton $\mathcal{E}_{l-1}$, the integer grid transfer operator $\underline{\mathcal{I}}_{l-1}^l : \underline{M}_{l-1}^0\rightarrow\underline{M}_l^0$
is defined as follows:
\begin{align}
\label{div-transfer}
    \underline{\mathcal{I}}_{l-1}^{l}:=(id - \underline{P}^{T}_{\mathcal{A}_{l}^\epsilon})\underline{I}_{l-1}^l,
\end{align}
where $id$ is the identity operator and  the local projection
$\underline{P}_{\underline{A}_{l}^\epsilon}^{T}: {\underline{M}}_{l}^0\rightarrow {\underline{M}}_{l,T}^{0}$ satisfies
\begin{align}
\label{harmonic-ex}
    \underline{a}_{l}^\epsilon(\underline{P}_{\mathcal{A}_{l}^{\epsilon}}^{T}\widehat{\underline{u}}_l,\; \widehat{\underline{v}}_l^T)
    =
    \underline{a}_{l}^\epsilon(\widehat{\underline{u}}_l,\; \widehat{\underline{v}}_l^T), 
    \quad \forall \widehat{\underline{u}}_l\in\underline{M}_{l}^0,\; \widehat{\underline{v}}_l^T\in\underline{M}_{l,T}^{0},
\end{align}
which can be solved element-by-element on the coarse mesh $\mathcal{T}_{l-1}$.

A robust smoother for \eqref{stOpEq-ALU1} needs to take care of the discretely divergence-free kernel space. Here the classical block smoothers for 
$H(\mathrm{div})$-elliptic problems from Arnold, Falk and Winther are used \cite{arnold2000multigrid}.
In particular, we use vertex-patch based damped block Jacobi
or block Gauss-Seidel smoother, which is of additive Schwarz type \footnote{Edge-patch based block smoother can also be used in three dimensions \cite{arnold2000multigrid} to reduce memory consumption.}.
For completeness, we formulate the vertex-patch damped block Jacobi smoother below.
Denoting $\EuScript{V}_l$ as the set of vertices of the triangulation $\mathcal{T}_l$.
We define the subset of facets $\mathcal{E}_{l,v}$ 
meeting at the vertex
$v\in \EuScript{V}_l$ as
\[
    \mathcal{E}_{l,v}:=\bigcup\limits_{\substack{F\in\mathcal{E}_l, \; v\in F}} F,
\]
and decompose the finite element space $\underline{M}_{l}^0$
into overlapping subspaces with support on $\mathcal{E}_{l,v}$: 
\begin{align*}
    \underline{M}_{l}^0 = \sum_{v\in \EuScript{V}_l} \underline{M}_{l, v}^0:=
\sum_{v\in \EuScript{V}_l}
    \{
        \widehat{\underline{v}}_l\in\underline{M}_{l}^0:\;
        \mathrm{supp}\;\widehat{\underline{v}}_l\subset\mathcal{E}_{l,v}
    \},
\end{align*}
Further defining $\underline{P}_{\underline{A}_{l}^\epsilon}^v:\; \underline{M}_{l}^0\rightarrow\underline{M}_{l,v}^{0}$ as the local projection onto the subspace $\underline{M}_l^{0,v}$ with respect to the bilinear form $a_l^\epsilon$ such that:
\[
    \underline{a}_{l}^\epsilon (\underline{P}_{\underline{A}_{l}^\epsilon}^v\widehat{\underline{u}}_l,\;\widehat{\underline{v}}_{l,i}) 
    = \underline{a}_{l}^\epsilon (\widehat{\underline{u}}_l,\;\widehat{\underline{v}}_{l,v}), \quad
    \forall \widehat{\underline{u}}_l \in\underline{M}_{l}^0,\;
    \widehat{\underline{v}}_{l,v}\in\underline{M}_{l,v}^{0},\;
    v\in\EuScript{V}_l,
\]
the damped block Jacobi smoother is then given as:
\begin{align}
\label{bjac}
    \underline{R}_l:=
    \varsigma\sum_{v\in \EuScript{V}_l}\underline{P}_{\underline{A}_{l}^\epsilon}^v (\underline{A}_{l}^{\epsilon})^{-1},
\end{align}
where $\varsigma$ is the damping parameter that is small enough (independent of $\epsilon$) to ensure 
the operator $(id - \underline{R}_l\underline{A}_l^\epsilon)$
is a positive definite contraction.

With these two ingredients ready, we are ready to present the W-cycle and variable V-cycle multigrid algorithms for the linear system $\underline{A}_l^\epsilon \underline{u}_l = \underline{g}_l\in \underline{M}_l^0$ 
as below:

\begin{algorithm}[H]
\caption{The multigrid algorithm for $\underline{A}_l^\epsilon\widehat{\underline{u}}_l = \underline{g}_l$.}
\label{alg:stokesMG}
\begin{algorithmic}
\State The $l$-th level multigrid algorithm produces $MG_{\underline{A}_l^\epsilon}(l, \underline{g}_l, \widehat{\underline{u}}_l^0, m(l), q)$
as an approximation solution for
$\underline{A}_l^\epsilon\widehat{\underline{u}}_l = \underline{g}_l$ with initial guess $\widehat{\underline{u}}_l^0$, where $m(l)$ denotes the number of pre-smoothing and post-smoothing steps on the $l$-th mesh level, and $q\in\{1, 2\}$.
Here $q=1$ corresponding to the V-cycle algorithm, and $q=2$ corresponding to the W-cycle algorithm \red{in which constant smoothing steps $m(1)=\cdots=m(l)=m$ is used}.
\If {$l = 1$}
    \State
    ${MG}_{\underline{A}_l^\epsilon}(l, \underline{g}_l,  \widehat{\underline{u}}_l, m(l), q) = (\underline{A}_l^\epsilon)^{-1} \underline{g}_l$.
\Else
\State Perform the following three steps:
\State (1) {\it Pre-smoothing.}
For $j = 1,\dots,m(l)$, compute $\widehat{\underline{u}}_l^j$ by
    \[\widehat{\underline{u}}_l^{j} = 
    \widehat{\underline{u}}_l^{j-1} + \underline{R}_l(\underline{g}_l - \underline{A}_l^\epsilon \widehat{\underline{u}}_l^{j-1}).\]
\State (2) {\it Coarse grid correction.}
Let $\delta{\underline{u}}_{l-1}^0 = \underline{0}$ and $\widehat{\underline{r}}_{l-1}= \underline{\mathcal{I}}_l^{l-1}(\underline{g}_l - \underline{A}_l^\epsilon \widehat{\underline{u}}_l^{m(l)})$. For $k = 1,\dots,q$, \indent compute $\delta{\underline{u}}_{l-1}^k$ by
\[
    \delta{\underline{u}}_{l-1}^k =  
    MG_{\underline{A}_{l-1}^\epsilon}(l-1, \widehat {\underline{r}}_{l-1}, \delta{\underline{u}}_{l-1}^{k-1}, m(l-1)). 
\]
\indent Then we get $\widehat{\underline{u}}_l^{m(l)+1} = \widehat{\underline{u}}_l^{m(l)} + \underline{\mathcal{I}}_{l-1}^l \delta{\underline{u}}_{l-1}^q$,
where $\underline{\mathcal{I}}_l^{l-1}: \underline M_{l}^0\rightarrow \underline M_{l-1}^0$ is the restriction operator \indent satisfying
\[
    (\underline{\mathcal{I}}_{l}^{l-1}\widehat{\underline{u}}_{l},\; \widehat{\underline{v}}_{l-1})_{0,l-1}
    =
    (\widehat{\underline{u}}_{l},\; \underline{\mathcal{I}}_{l-1}^l\widehat{\underline{v}}_l^T)_{0,l}, 
    \quad \forall \widehat{\underline{u}}_l\in\underline{M}_{l}^0,\; \widehat{\underline{v}}_{l-1}\in\underline{M}_{l-1}^{0}.
    \]
\State (3) {\it Post-smoothing.} 
For $j=m(l)+2,\dots, 2m(l)+1$, compute $\widehat{\underline{u}}_l^j$
by 
\[
    \widehat{\underline{u}}_l^{j} = 
    \widehat{\underline{u}}_l^{j-1} + \underline{R}_l^T(\underline{g}_l - \underline{A}_l^\epsilon \widehat{\underline{u}}_l^{j-1}),
\]
\State where $\underline{R}_l^T$ is the transpose of $\underline{R}_l$ with respect to the inner product $(\cdot,\;\cdot)_{0,l}$.
\State We then define 
$MG_{\underline{A}_l}(l, \underline{g}_l,\widehat{\underline{u}}_l^0, m(l), q) = \widehat{\underline{u}}_l^{2m(l)+1}$.\EndIf
\end{algorithmic}
\end{algorithm}

The analysis of the above multigrid algorithm follows directly from Sch\"oberl's work \cite{schoberl1999multigrid, schoberl1999robust}, which is based on classical multigrid theories \cite{bramble1987new, bramble1991analysis, hackbusch2013multi}, and boils down to the verification of three properties of the underlying operators.
To perform the analysis, we denote the following parameter-dependent $L_2$-like norm $\|\cdot\|_{\epsilon,l}$ on $\underline{M}_l^0$:
\[
    \|\widehat{\underline{u}}_l\|_{\epsilon,l}^2:=
    \|\widehat{\underline{u}}_l\|_{0, l}^2
    +
    {\epsilon}^{-1}{h}_l^{2}\|\nabla\cdot\underline{\Pi}_{l}^{CR}\widehat{\underline{u}}_l\|_{0}^2
    +
    \epsilon^{-2}{h}_l^{2}\|\varPi_{l-1}^{W}( \nabla\cdot\underline{\Pi}_l^{CR}\widehat{\underline{u}}_l )\|_{0}^2 ,
\] 
where $\varPi_{l-1}^W$ is the $L_2$-projection onto the piecewise constant space $W_{l-1}$ on the $(l-1)$-th  level mesh. We define $\|\cdot\|_{\underline{A}_l^{\epsilon}}$ as the norm induced by the SPD operator ${\underline{A}_l^{\epsilon}}$, i.e. $\|\cdot\|_{\underline{A}_l^{\epsilon}}:=\sqrt{(\underline{A}_l^{\epsilon}\; \cdot,\; \cdot)}$,
and denote
$\underline{P}_{\mathcal{A}_{l}^\epsilon}^{l-1}:\; \underline{M}_{l}^0\rightarrow\underline{M}_{l-1}^0$ as the transpose of $\underline{\mathcal{I}}_{l-1}^l$ with respect to the bilinear form $\underline{a}_{l}^\epsilon$ such that:
\[
    \underline{a}_{l-1}^\epsilon(\underline{P}_{\mathcal{A}_{l}^\epsilon}^{l-1}\widehat{\underline{v}}_l,\; \widehat{\underline{v}}_{l-1})
    =
    \underline{a}_{l}^\epsilon(\widehat{\underline{v}}_l,\; \underline{\mathcal{I}}_{l-1}^l\widehat{\underline{v}}_{l-1}),\quad
    \forall \widehat{\underline{u}}_l\in\underline{M}_{l}^0,\; 
    \widehat{\underline{v}}_{l-1}\in\underline{M}_{l-1}^0.
\]

We quote the abstract multigrid convergence result from 
\red{\cite[Theorem 3.7]{schoberl1999robust}}.
\begin{theorem}[\red{Theorem 3.7 of \cite{schoberl1999robust}}]
\label{thm:stokesMG}
Let the multigrid procedure be as defined in Algorithm \ref{alg:stokesMG}. Assume there hold the following 
properties:
\begin{itemize}
    \item [-] (A0) Scaling property. The spectrum of $id-\underline{R}_l\underline{A}_l^\epsilon$ is in the interval $(0, 1)$.
    \item [-] (A1) Approximation property. 
    There exists a positive constant $C$ independent of mesh size and the parameter $\epsilon$ such that:
    \label{theo:stokesApprox}
    \[
    \left\|\left(id - \underline{\mathcal{I}}_{l-1}^{l}\underline{P}_{\mathcal{A}_{l}^\epsilon}^{l-1}\right)\widehat{\underline{u}}_l\right\|_{\epsilon, l} 
    \le
    C h_l \|\widehat{\underline{u}}_l\|_{\underline{A}_{l}^\epsilon}, \quad
    \forall \widehat{\underline{u}}_l\in\underline{M}_{l}^0.
    \]
    \item [-] (A2) Smoothing property. There exists a positive constant $C$ independent of mesh size $h_l$ and the parameter $\epsilon$ such that:
    \[
    \left\|
        (id - \underline{R}_l \underline{A}_{l}^\epsilon)^{m(l)}\widehat{\underline{u}}_l
    \right\|_{\underline{A}_{l}^\epsilon}
    \le
    C m(l)^{-1/4} h_l^{-1}\|\widehat{\underline{u}}_l\|_{\epsilon, l},
    \quad
    \forall \widehat{\underline{u}}_l\in\underline{M}_{l}^0.
    \]
\end{itemize}
Then the following multigrid methods lead to optimal solvers:
\begin{itemize}
    \item The W-cycle multigrid algorithm with sufficiently many smoothing steps leads to a convergent method.
    That is, there exists positive constants $m_\ast$ and $C$ independent of mesh size $h_l$ and parameter $\epsilon$, such that with $q = 2$ and $m(1)= \cdots=m(l) =  m$ in Algorithm \ref{alg:stokesMG} we have:
    \[
        \|\underline{\mathbb{E}}_{l,m}\widehat{\underline{v}}_l\|_{\underline{A}_l^\epsilon}
        \le  C m^{-1/4} \|\underline{v}_l\|_{\underline{A}_l^\epsilon},
        \quad
        \forall \widehat{\underline{v}}_l\in\underline{M}_l^0,\;
        l\ge 1,\; m\ge m_\ast,
    \]
    where $\mathbb{\underline{E}}_{l,m}: \underline{M}_l^0\rightarrow \underline{M}_l^0$
    is the operator relating the initial error and the final error of the multigrid $W$-cycle algorithm, i.e., 
    \[
    \mathbb{\underline{E}}_{l,m}(\widehat{\underline{u}}_l-\widehat{\underline{u}}_l^0) := \widehat{\underline{u}}_l - MG_{\underline{A}_l^\epsilon}(l, \underline{g}_l, \widehat{\underline{u}}_l^0, m, 2). 
    \]
    \item The variable V-cycle algorithm leads to a robust preconditioner. That is, 
    with $q=1$ and $\beta_0 m(l) \le m(l-1) \le  \beta_1 m(l)$ in Algorithm \ref{alg:stokesMG}  ($1<\beta_0<\beta_1$), there exists positive constant $C$ independent of mesh size $h_l$ and parameter $\epsilon$ such that
    \[
        \kappa(\underline{\mathbb{B}}_{l,m(l)}\underline{A}_l^\epsilon) \le 1 + C m(l)^{-1/4} ,
    \]
    where $\kappa$ is the condition number, and  $\mathbb{\underline{B}}_{l,m(l)}: \underline{M}_l^0\rightarrow \underline{M}_l^0$
    is the preconditioning operator relating the residual to the correction of the variable $V$-cycle algorithm with a zero initial guess, i.e., 
    \[
        \underline{\mathbb{B}}_{l,m(l)}\widehat{\underline{v}}_l:=
        MG_{\underline{A}_l^\epsilon}(l, \widehat{\underline{v}}_l, 0, m(l),1),
        \quad \forall \widehat{\underline{v}}_l\in\underline{M}_l^0.
    \]
\end{itemize}
\end{theorem}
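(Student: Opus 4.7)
The plan is to follow the classical nonconforming/parameter-robust multigrid convergence framework of Bramble-Pasciak-Xu and Hackbusch, as adapted by Sch\"oberl to the singularly perturbed saddle-point setting. Since the three abstract assumptions (A0)--(A2) are given as hypotheses of the theorem, the proof reduces to deriving the $W$-cycle contraction bound and the variable $V$-cycle condition-number bound purely from these ingredients by induction on the mesh level $l$.

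For the $W$-cycle result I would proceed by induction on $l$. The base case $l=1$ is trivial because the coarsest system is solved exactly. For the inductive step, write the error propagation operator as
\[
\underline{\mathbb{E}}_{l,m} = (id-\underline{R}_l^T\underline{A}_l^\epsilon)^{m} \bigl(id - \underline{\mathcal{I}}_{l-1}^l \underline{\mathbb{C}}_{l-1}\underline{P}_{\mathcal{A}_l^\epsilon}^{l-1}\bigr)(id-\underline{R}_l\underline{A}_l^\epsilon)^{m},
\]
where $\underline{\mathbb{C}}_{l-1} = \bigl(id-\underline{\mathbb{E}}_{l-1,m}^{2}\bigr)(\underline{A}_{l-1}^{\epsilon})^{-1}$ absorbs the two recursive coarse-grid solves. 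Splitting the middle factor as $(id-\underline{\mathcal{I}}_{l-1}^l\underline{P}_{\mathcal{A}_l^\epsilon}^{l-1}) + \underline{\mathcal{I}}_{l-1}^l \underline{\mathbb{E}}_{l-1,m}^{2}(\underline{A}_{l-1}^{\epsilon})^{-1}\underline{A}_{l}^{\epsilon}\underline{P}_{\mathcal{A}_l^\epsilon}^{l-1}$, I would control the first piece via (A1) in the $\|\cdot\|_{\epsilon,l}$ norm, absorb the factor $h_l^{-1}$ using the smoothing property (A2), and bound the second piece by the inductive hypothesis squared. This yields a recurrence of the form $\eta_l \le Cm^{-1/4} + \eta_{l-1}^{2}$, which for sufficiently large $m\ge m_\ast$ gives the uniform contraction $\eta_l \le 2Cm^{-1/4}$, independent of $l$ and $\epsilon$.

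For the variable $V$-cycle the analysis is more delicate since $q=1$ rules out the quadratic self-correction. Here the standard device is to exploit the increasing smoothing counts on coarser levels encoded in $\beta_0 m(l) \le m(l-1) \le \beta_1 m(l)$, and to analyze the symmetric preconditioner $\underline{\mathbb{B}}_{l,m(l)}$ through its two-sided form
\[
(1-\delta_l)(\underline{A}_l^\epsilon v,v) \;\le\; (\underline{A}_l^\epsilon \underline{\mathbb{B}}_{l,m(l)}\underline{A}_l^\epsilon v,v) \;\le\; (1+\delta_l)(\underline{A}_l^\epsilon v,v).
\]
An induction in $l$ based on (A1)--(A2) then yields $\delta_l \le C m(l)^{-1/4}$ uniformly in $l$ and $\epsilon$, because the geometric growth of smoothing counts on coarser levels exactly compensates for the absence of an inner iteration, in the style of the classical Bramble-Pasciak-Xu framework.

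The main obstacle is not the abstract induction, which is essentially Sch\"oberl's Theorem 3.7 restated, but rather the verification of (A1) for our specific stabilized intergrid transfer $\underline{\mathcal{I}}_{l-1}^l = (id - \underline{P}_{\mathcal{A}_l^\epsilon}^T)\underline{I}_{l-1}^l$ defined in \eqref{div-transfer}. One must show that the discrete harmonic correction $-\underline{P}_{\mathcal{A}_l^\epsilon}^T\underline{I}_{l-1}^l$ exactly neutralizes the coarse-grid divergence that would otherwise cause the $\epsilon^{-1}$ and $\epsilon^{-2}$ terms in $\|\cdot\|_{\epsilon,l}$ to blow up as $\epsilon\to 0$. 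Similarly, (A2) for the vertex-patch block-Jacobi smoother \eqref{bjac} requires the Arnold-Falk-Winther theory for $H(\mathrm{div})$-elliptic problems; the weaker rate $m^{-1/4}$ (as opposed to the $m^{-1/2}$ one gets in the standard elliptic setting) reflects the reduced regularity available in the singularly perturbed limit and is precisely the reason the $W$-cycle requires the smoothing count $m$ to exceed a threshold $m_\ast$.
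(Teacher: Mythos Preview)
Your proposal is a reasonable sketch of the standard Bramble--Pasciak--Xu/Hackbusch induction for abstract multigrid convergence, and the $W$-cycle recursion and variable $V$-cycle spectral-equivalence arguments you outline are essentially correct. However, the paper does not actually prove this theorem at all: it is explicitly quoted from Sch\"oberl's thesis \cite[Theorem~3.7]{schoberl1999robust} as an abstract result, with no proof given in the paper. What the paper \emph{does} supply, in a subsequent Remark, is a discussion of how the hypotheses (A0)--(A2) are verified for the specific HDG-P0 setting---pointing to Sch\"oberl's own Sections 4.4 and 6 for the approximation and smoothing properties, and highlighting two structural facts: that the averaging operator $\underline{I}_{l-1}^l$ preserves coarse-grid divergence elementwise (so the harmonic correction only acts on the bubble space $M_{l,T}^0$), and that the smoothing bound \eqref{interp} follows from the explicit characterization of discretely divergence-free functions via the nonconforming Stokes complex.

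Your final paragraph on the ``main obstacle'' is thus closer in spirit to what the paper actually contributes than your first three paragraphs, which reproduce material the paper simply cites. Your discussion there is broadly correct, though you slightly misattribute the $m^{-1/4}$ rate: it arises not from reduced elliptic regularity but from the interpolation-norm argument \eqref{interp} between the additive-Schwarz norm $\|\cdot\|_{\underline{D}_l^\epsilon}$ and the energy norm, which costs a square root relative to the usual $m^{-1/2}$ smoothing rate.
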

\begin{remark}[Verifications of the Assumptions in Theorem \ref{thm:stokesMG}]
Assumption (A0) is satisfied by taking the damping parameter $\varsigma$ sufficiently small, which only depends on the (bounded) number of overlapping blocks \cite{arnold2000multigrid}. 
Note that if a block Gauss-Seidel smoother is used, there is no need to add damping.

\red{We note that the approximation property (A1), the smoothing property (A2), and thus our proposed multigrid method for the {\sf HDG-P0} scheme for the generalized Stokes equations require full elliptic regularity assumed in \eqref{stReg}.}
The verification of the approximation property (A1) essentially follows from \red{\cite[Section 4.4]{schoberl1999robust}} (see also \red{\cite[Section 5]{schoberl1999multigrid}}), which  establishes a stable decomposition result for the coarse grid operator $id-\underline{\mathcal{I}}_{l-1}^{l}\underline{P}_{\mathcal{A}_{l}^\epsilon}^{l-1}$, where the analysis was performed on an equivalent mixed formulation. 
Here a key step in the proof is to show the averaging operator 
$\underline I_{l-1}^{l}$
preserves the divergence on the coarse-grid:
\begin{align*}
    \int_{K}
\nabla\cdot \underline{\Pi}_{l-1}^{CR}\underline{\widehat{v}}_{l-1}\mathrm{dx}
=
&\;
\int_{\partial K}
\underline{\widehat{v}}_{l-1}\cdot\underline n_K\mathrm{ds}
=\;
\int_{\partial K}
\underline I_{l-1}^{l}\underline{\widehat{v}}_{l-1}\cdot\underline n_{K}\mathrm{ds}\\
=&\;
\sum_{j=1}^s\int_{\partial K^j}
\underline I_{l-1}^{l}\underline{\widehat{v}}_{l-1}\cdot\underline n_{K^j}\mathrm{ds}
=
\sum_{j=1}^s\int_{K^j}
\nabla\cdot \underline{\Pi}_{l}^{CR}
\underline I_{l-1}^{l}\underline{\widehat{v}}_{l-1}\mathrm{dx},
\end{align*}
for all $K\in \mathcal{T}_{l-1}$, where 
the elements $K^j$ are children of $K$ such that $\bar K=\cup_{j=1}^s \bar{K}^j$.
Hence, the coarse-grid divergence correction only need to be locally performed on the bubble spaces $M_{l,T}^0$ in \eqref{harmonic-ex}.

The verification of the 
smoothing property (A2) follows from \red{\cite[Section 6]{schoberl1999multigrid}}, where a key step is to establish the following bound of the interpolation norm (Lemma 7 in \red{\cite{schoberl1999multigrid}})
\begin{align}
\label{interp}
    \|\widehat{\underline u}_{l,1}\|_{[\underline D_l^\epsilon, \underline A_l^\epsilon]}\lesssim 
\|\widehat{\underline u}_{l,1}\|_{\epsilon, l},
\end{align}
where $\widehat{\underline u}_{l,1}\in 
\underline{Z}_{l}^0:=
\{\widehat {\underline{v}}_{l}\in
\underline{M}_{l}^0:\; 
\nabla\cdot \underline{\Pi}_l^{CR}\widehat {\underline{v}}_{l}|_{K} = 0,\; \forall K\in \mathcal{T}_l
\}
$
is a discretely divergence-free function, 
$\|\cdot\|_{\underline D_l^\epsilon}$ is the additive Schwarz norm  expressed by 
\[
\|\widehat{\underline u}_l\|_{\underline D_l^\epsilon}^2:=
\mathrm{inf}_{u=\sum_v u_{l,v}, u_{l,v}\in \underline M_{l,v}}
\sum_{v\in \EuScript{V}_l}\|u_{l,v}\|_{\underline A_l^\epsilon}^2,
\]
and 
$\|\cdot\|_{[\underline D_l^\epsilon, \underline A_l^\epsilon]}$ is the interpolation norm between 
$\|\cdot\|_{\underline D_l^\epsilon}$ and 
$\|\cdot\|_{\underline A_l^\epsilon}$.
The proof of \eqref{interp} relies on the explicit characterization of discretely divergence-free functions in $\underline{Z}_l^0$
using the nonconforming Stokes complex \cite{brenner2015forty, huang2020nonconforming}, which can be expressed as the discrete curl of the 
Morley finite element \cite{morley1968triangular} when $d=2$, and the discrete curl of the 
$H$(grad curl)-nonconforming finite element \cite{Xucurl, huang2020nonconforming} when $d=3$.
Using such explicit characterization, one can prove
\[
    \|\widehat{\underline u}_{l,1}\|_{[\underline D_l^\epsilon, \underline A_l^\epsilon]}\lesssim 
        \|\widehat{\underline u}_{l,1}\|_{\underline D_l^\epsilon}\lesssim
\|\widehat{\underline u}_{l,1}\|_{\epsilon, l},
\]
see similar derivation in \cite[Lemma 9]{hong2016robust}.
\end{remark}

\section{Numerical experiments}
\label{sec:numExp}
In this section, we present numerical experiments to verify the optimal convergence rates of the {\sf HDG-P0} schemes and the optimality of the multigrid algorithms. 
\red{We tested the proposed multigrid algorithms both as the iteration solvers and as the preconditioners for the preconditioned conjugate gradient (PCG) method to solve the condensed systems \eqref{op1} and \eqref{stOpEq-ALU1}, with a relative tolerance of $10^{-8}$ used as the stopping criterion.
The bisection algorithm is used for (local) mesh refinements.}
All results are obtained by using the NGSolve software\cite{Schoberl16}. 
Source code is 
available at \url{https://github.com/WZKuang/MG4HDG-P0.git}.

\subsection{Reaction-diffusion equation}
\subsubsection{Manufactured solution}
\label{subsec:diffConvergeNum}
We first verify the optimal convergence rates of the {\sf HDG-P0} scheme \eqref{ellipWeak0} for the reaction-diffusion equation with known solution. We set the domain as a unit square/cube $\Omega=[0, 1]^d$ with homogeneous Dirichlet boundary conditions on all sides. We let the coefficients $\alpha = \beta = 1 + \frac{1}{2}sin(x)sin(y)$ when $d=2$, and $\alpha = \beta = 1 + \frac{1}{2}sin(x)sin(y)sin(z)$ when $d=3$. The exact solution is set as:
\begin{align*}
    u = \left\{
    \begin{tabular}{l l}
        $(x-x^2)(y-y^2),$ & $\text{when $d = 2$,}$
        \\
        $(x-x^2)(y-y^2)(z-z^2),$ & $\text{when $d = 3$.}$
    \end{tabular} 
    \right.
\end{align*}
Table \ref{tab:poissonRate} reports the estimated order of convergence (EOC) of the $L_2$ norms $\|u_h - u\|_{0}$ and $\|\underline{\sigma_h} - \underline{\sigma}\|_{0}$ of the {\sf HDG-P0} scheme \eqref{ellipWeak0}, and optimal convergence rates are obtained as expected.

\red{
Standard V-cycle multigrid in Algorithm \ref{alg:ellipMG} is tested both as the iteration solver and as the preconditioner for the PCG algorithm for the the condensed {\sf HDG-P0} scheme \eqref{hdgX}.
}
The coarsest mesh is a triangulation of $\Omega$ with the maximum element diameter less than $1/4$ in both two-dimensional and three-dimensional cases, \red{followed by uniform refinements.}
\red{
The iteration counts of the multigrid iteration and the PCG method are reported respectively in Table \ref{tab:poissonSolver} and Table \ref{tab:poissonSm}
with different smoothers and smoothing steps in the multigrid algorithms,
where we denote P-JAC as the damped point Jacobi smoother with damping parameter set as 0.5,
and P-GS as the point Gauss-Seidel smoother.
For the multigrid iteration solver, we find that the standard V-cycle fails when the smoothing steps are not large enough.
The needed smoothing steps to make the multigrid iteration converge in three-dimensional cases are larger than those in two-dimensional cases.
For the PCG method, the iteration counts and condition numbers are bounded independent of the mesh size in two dimensions for all cases.}
There is a very mild iteration count growth in three dimensions when we only performing one or two smoothing steps, where the situation is further improved when we take 4 smoothing steps.

\begin{table}[ht]
    \begin{tabular}{|c|cccc|cccc|}
    \hline
    \multirow{2}{*}{Level} & \multicolumn{4}{c|}{$d = 2$} & \multicolumn{4}{c|}{$d = 3$} \\ \cline{2-9} 
     & $\|u_h - u\|_{0}$ & \multicolumn{1}{c|}{EOC} & $\|\underline{\sigma}_h - \underline{\sigma}\|_{0}$ & EOC & $\|u_h - u\|_{0}$ & \multicolumn{1}{c|}{EOC} & $\|\underline{\sigma}_h - \underline{\sigma}\|_{0}$ & EOC \\ \hline
    1 & $1.17 e{-1}$ & \multicolumn{1}{c|}{$\backslash$} & $8.29 e{-1}$ & $\backslash$ & $1.67 e{-2}$ & \multicolumn{1}{c|}{$\backslash$} & $1.72 e{-1}$ & $\backslash$ \\ \cline{1-1}
    2 & $2.96 e{-2}$ & \multicolumn{1}{c|}{1.98} & $4.22 e{-1}$ & 0.97 & $4.25 e{-3}$ & \multicolumn{1}{c|}{1.97} & $9.22 e{-2}$ & 0.90 \\ \cline{1-1}
    3 & $7.44 e{-3}$ & \multicolumn{1}{c|}{1.99} & $2.12 e{-1}$ & 0.99 & $1.07 e{-3}$ & \multicolumn{1}{c|}{1.99} & $4.68 e{-2}$ & 0.98 \\ \cline{1-1}
    4 & $1.86 e{-3}$ & \multicolumn{1}{c|}{2.00} & $1.06 e{-1}$ & 1.00 & $2.66 e{-4}$ & \multicolumn{1}{c|}{2.00} & $2.35 e{-2}$ & 1.00 \\ \cline{1-1}
    5 & $4.66 e{-4}$ & \multicolumn{1}{c|}{2.00} & $5.31 e{-2}$ & 1.00 & $6.64 e{-5}$ & \multicolumn{1}{c|}{2.00} & $1.17 e{-2}$ & 1.00 \\ \hline
    \end{tabular}
    \caption{Estimated convergence rates of the {\sf HDG-P0} scheme in Example \ref{subsec:diffConvergeNum}.}
    \label{tab:poissonRate}
\end{table}

\begin{table}[ht]
    \red{
    \begin{tabular}{|cccccccc|}
    \hline
    \multicolumn{8}{|c|}{$d=2$}                                                                                                                  \\ \hline
    \multicolumn{2}{|c|}{\multirow{2}{*}{Parameters}}           & \multicolumn{3}{c|}{P-JAC}                       & \multicolumn{3}{c|}{P-GS}   \\ \cline{3-8} 
    \multicolumn{2}{|c|}{}    & $m = 1$ & $m = 2$ & \multicolumn{1}{c|}{$m = 4$} & $m = 1$ & $m = 2$ & $m = 4$ \\ \hline
    \multicolumn{1}{|c|}{$J$} & \multicolumn{1}{c|}{Facet DOFs} & \multicolumn{6}{c|}{Iteration Count}                                           \\ \hline
    \multicolumn{1}{|c|}{2}   & \multicolumn{1}{c|}{$2.20 e2$}  & 43      & 23      & \multicolumn{1}{c|}{13}      & 20      & 10      & 7       \\ \cline{1-2}
    \multicolumn{1}{|c|}{3}   & \multicolumn{1}{c|}{$8.48 e2$}  & 97      & 25      & \multicolumn{1}{c|}{14}      & 22      & 12      & 8       \\ \cline{1-2}
    \multicolumn{1}{|c|}{4}   & \multicolumn{1}{c|}{$3.33 e3$}  & N/A     & 29      & \multicolumn{1}{c|}{16}      & 25      & 15      & 9       \\ \cline{1-2}
    \multicolumn{1}{|c|}{5}   & \multicolumn{1}{c|}{$1.32 e4$}  & N/A     & 33      & \multicolumn{1}{c|}{17}      & 29      & 17      & 10      \\ \cline{1-2}
    \multicolumn{1}{|c|}{6}   & \multicolumn{1}{c|}{$5.25 e4$}  & N/A     & 37      & \multicolumn{1}{c|}{18}      & 34      & 19      & 11      \\ \cline{1-2}
    \multicolumn{1}{|c|}{7}   & \multicolumn{1}{c|}{$2.09 e5$}  & N/A     & 42      & \multicolumn{1}{c|}{18}      & 43      & 21      & 11      \\ \cline{1-2}
    \multicolumn{1}{|c|}{8}   & \multicolumn{1}{c|}{$8.37 e5$}  & N/A     & 50      & \multicolumn{1}{c|}{19}      & 56      & 23      & 12      \\ \hline\hline
    \multicolumn{8}{|c|}{$d=3$}                                                                                                                  \\ \hline
    \multicolumn{2}{|c|}{\multirow{2}{*}{Parameters}}           & \multicolumn{3}{c|}{P-JAC}                       & \multicolumn{3}{c|}{P-GS}   \\ \cline{3-8} 
    \multicolumn{2}{|c|}{}    & $m = 1$ & $m = 4$ & \multicolumn{1}{c|}{$m = 8$} & $m = 1$ & $m = 4$ & $m = 8$ \\ \hline
    \multicolumn{1}{|c|}{$J$} & \multicolumn{1}{c|}{Facet DOFs} & \multicolumn{6}{c|}{Iteration Count}                                           \\ \hline
    \multicolumn{1}{|c|}{2}   & \multicolumn{1}{c|}{$3.45 e3$}  & 160     & 21      & \multicolumn{1}{c|}{13}      & 39      & 9       & 6       \\ \cline{1-2}
    \multicolumn{1}{|c|}{3}   & \multicolumn{1}{c|}{$1.05 e4$}  & N/A     & 30      & \multicolumn{1}{c|}{17}      & 63      & 11      & 7       \\ \cline{1-2}
    \multicolumn{1}{|c|}{4}   & \multicolumn{1}{c|}{$3.02 e4$}  & N/A     & 62      & \multicolumn{1}{c|}{21}      & N/A     & 13      & 8       \\ \cline{1-2}
    \multicolumn{1}{|c|}{5}   & \multicolumn{1}{c|}{$8.42 e4$}  & N/A     & 80      & \multicolumn{1}{c|}{23}      & N/A     & 15      & 9       \\ \cline{1-2}
    \multicolumn{1}{|c|}{6}   & \multicolumn{1}{c|}{$2.26 e5$}  & N/A     & N/A     & \multicolumn{1}{c|}{36}      & N/A     & 22      & 10      \\ \cline{1-2}
    \multicolumn{1}{|c|}{7}   & \multicolumn{1}{c|}{$6.01 e5$}  & N/A     & N/A     & \multicolumn{1}{c|}{48}      & N/A     & 30      & 11      \\ \cline{1-2}
    \multicolumn{1}{|c|}{8}   & \multicolumn{1}{c|}{$1.54 e6$}  & N/A     & N/A     & \multicolumn{1}{c|}{62}      & N/A     & 33      & 11      \\ \hline
    \end{tabular}
    }
\caption{Iteration counts of V-cycle multigrid iteration solver
with different smoothers and smoothing steps in Example \ref{subsec:diffConvergeNum}.}
\label{tab:poissonSolver}
\end{table}

\begin{table}[ht]
    \setlength{\tabcolsep}{6pt}
    \begin{tabular}{|cccccccccccccc|}
    \hline
    \multicolumn{14}{|c|}{$d=2$}                                                                                                                                                                                                                                                                       \\ \hline
    \multicolumn{2}{|c|}{\multirow{2}{*}{Parameters}}           & \multicolumn{6}{c|}{P-JAC}                                                                                            & \multicolumn{6}{c|}{P-GS}                                                                                    \\ \cline{3-14} 
    \multicolumn{2}{|c|}{}    & \multicolumn{2}{c|}{$m = 1$}          & \multicolumn{2}{c|}{$m = 2$}          & \multicolumn{2}{c|}{$m = 4$}          & \multicolumn{2}{c|}{$m = 1$}          & \multicolumn{2}{c|}{$m = 2$}          & \multicolumn{2}{c|}{$m = 4$} \\ \hline
    \multicolumn{1}{|c|}{$J$} & \multicolumn{1}{c|}{Facet DOFs} & \# It & \multicolumn{1}{c|}{$\kappa$} & \# It & \multicolumn{1}{c|}{$\kappa$} & \# It & \multicolumn{1}{c|}{$\kappa$} & \# It & \multicolumn{1}{c|}{$\kappa$} & \# It & \multicolumn{1}{c|}{$\kappa$} & \# It       & $\kappa$       \\ \hline
    \multicolumn{1}{|c|}{2}   & \multicolumn{1}{c|}{$2.20 e2$}  & 19    & \multicolumn{1}{c|}{4.9}      & 13    & \multicolumn{1}{c|}{2.6}      & 9     & \multicolumn{1}{c|}{1.5}      & 12    & \multicolumn{1}{c|}{2.2}      & 8     & \multicolumn{1}{c|}{1.3}      & 6           & 1.1            \\ \cline{1-2}
    \multicolumn{1}{|c|}{3}   & \multicolumn{1}{c|}{$8.48 e2$}  & 22    & \multicolumn{1}{c|}{7.1}      & 14    & \multicolumn{1}{c|}{3.4}      & 10    & \multicolumn{1}{c|}{1.8}      & 13    & \multicolumn{1}{c|}{2.8}      & 9     & \multicolumn{1}{c|}{1.5}      & 6           & 1.1            \\ \cline{1-2}
    \multicolumn{1}{|c|}{4}   & \multicolumn{1}{c|}{$3.33 e3$}  & 23    & \multicolumn{1}{c|}{8.0}      & 15    & \multicolumn{1}{c|}{3.8}      & 11    & \multicolumn{1}{c|}{2.1}      & 14    & \multicolumn{1}{c|}{3.2}      & 9     & \multicolumn{1}{c|}{1.7}      & 7           & 1.2            \\ \cline{1-2}
    \multicolumn{1}{|c|}{5}   & \multicolumn{1}{c|}{$1.32 e4$}  & 25    & \multicolumn{1}{c|}{11}       & 16    & \multicolumn{1}{c|}{4.7}      & 11    & \multicolumn{1}{c|}{2.3}      & 14    & \multicolumn{1}{c|}{3.5}      & 10    & \multicolumn{1}{c|}{1.8}      & 7           & 1.2            \\ \cline{1-2}
    \multicolumn{1}{|c|}{6}   & \multicolumn{1}{c|}{$5.25 e4$}  & 26    & \multicolumn{1}{c|}{12}       & 16    & \multicolumn{1}{c|}{4.9}      & 11    & \multicolumn{1}{c|}{2.3}      & 15    & \multicolumn{1}{c|}{3.7}      & 10    & \multicolumn{1}{c|}{1.9}      & 7           & 1.3            \\ \cline{1-2}
    \multicolumn{1}{|c|}{7}   & \multicolumn{1}{c|}{$2.09 e5$}  & 26    & \multicolumn{1}{c|}{12}       & 16    & \multicolumn{1}{c|}{5.1}      & 11    & \multicolumn{1}{c|}{2.4}      & 15    & \multicolumn{1}{c|}{4.0}      & 10    & \multicolumn{1}{c|}{2.0}      & 7           & 1.3            \\ \cline{1-2}
    \multicolumn{1}{|c|}{8}   & \multicolumn{1}{c|}{$8.37 e5$}  & 26    & \multicolumn{1}{c|}{12}       & 16    & \multicolumn{1}{c|}{5.1}      & 11    & \multicolumn{1}{c|}{2.5}      & 15    & \multicolumn{1}{c|}{4.1}      & 10    & \multicolumn{1}{c|}{2.0}      & 7           & 1.3            \\ \hline\hline
    \multicolumn{14}{|c|}{$d=3$}                                                                                                                                                                                                                                                                       \\ \hline
    \multicolumn{2}{|c|}{\multirow{2}{*}{Parameters}}           & \multicolumn{6}{c|}{P-JAC}                                                                                            & \multicolumn{6}{c|}{P-GS}                                                                                    \\ \cline{3-14} 
    \multicolumn{2}{|c|}{}    & \multicolumn{2}{c|}{$m = 1$}          & \multicolumn{2}{c|}{$m = 2$}          & \multicolumn{2}{c|}{$m = 4$}          & \multicolumn{2}{c|}{$m = 1$}          & \multicolumn{2}{c|}{$m = 2$}          & \multicolumn{2}{c|}{$m = 4$} \\ \hline
    \multicolumn{1}{|c|}{$J$} & \multicolumn{1}{c|}{Facet DOFs} & \# It & \multicolumn{1}{c|}{$\kappa$} & \# It & \multicolumn{1}{c|}{$\kappa$} & \# It & \multicolumn{1}{c|}{$\kappa$} & \# It & \multicolumn{1}{c|}{$\kappa$} & \# It & \multicolumn{1}{c|}{$\kappa$} & \# It       & $\kappa$       \\ \hline
    \multicolumn{1}{|c|}{2}   & \multicolumn{1}{c|}{$3.45 e3$}  & 26    & \multicolumn{1}{c|}{9.7}      & 18    & \multicolumn{1}{c|}{4.6}      & 13    & \multicolumn{1}{c|}{2.4}      & 18    & \multicolumn{1}{c|}{4.7}      & 11    & \multicolumn{1}{c|}{1.8}      & 7           & 1.2            \\ \cline{1-2}
    \multicolumn{1}{|c|}{3}   & \multicolumn{1}{c|}{$1.05 e4$}  & 37    & \multicolumn{1}{c|}{19}       & 25    & \multicolumn{1}{c|}{8.2}      & 17    & \multicolumn{1}{c|}{4.2}      & 23    & \multicolumn{1}{c|}{7.3}      & 14    & \multicolumn{1}{c|}{2.8}      & 9           & 1.6            \\ \cline{1-2}
    \multicolumn{1}{|c|}{4}   & \multicolumn{1}{c|}{$3.02 e4$}  & 38    & \multicolumn{1}{c|}{22}       & 26    & \multicolumn{1}{c|}{10}       & 18    & \multicolumn{1}{c|}{4.8}      & 25    & \multicolumn{1}{c|}{10}       & 15    & \multicolumn{1}{c|}{3.4}      & 10          & 1.7            \\ \cline{1-2}
    \multicolumn{1}{|c|}{5}   & \multicolumn{1}{c|}{$8.42 e4$}  & 44    & \multicolumn{1}{c|}{25}       & 29    & \multicolumn{1}{c|}{12}       & 20    & \multicolumn{1}{c|}{5.5}      & 29    & \multicolumn{1}{c|}{14}       & 16    & \multicolumn{1}{c|}{4.0}      & 10          & 1.9            \\ \cline{1-2}
    \multicolumn{1}{|c|}{6}   & \multicolumn{1}{c|}{$2.26 e5$}  & 46    & \multicolumn{1}{c|}{29}       & 30    & \multicolumn{1}{c|}{13}       & 21    & \multicolumn{1}{c|}{6.3}      & 31    & \multicolumn{1}{c|}{16}       & 18    & \multicolumn{1}{c|}{4.7}      & 11          & 2.1            \\ \cline{1-2}
    \multicolumn{1}{|c|}{7}   & \multicolumn{1}{c|}{$6.01 e5$}  & 49    & \multicolumn{1}{c|}{35}       & 32    & \multicolumn{1}{c|}{18}       & 22    & \multicolumn{1}{c|}{8.6}      & 35    & \multicolumn{1}{c|}{22}       & 19    & \multicolumn{1}{c|}{6.2}      & 12          & 2.7            \\ \cline{1-2}
    \multicolumn{1}{|c|}{8}   & \multicolumn{1}{c|}{$1.54 e6$}  & 50    & \multicolumn{1}{c|}{35}       & 32    & \multicolumn{1}{c|}{17}       & 22    & \multicolumn{1}{c|}{8.1}      & 36    & \multicolumn{1}{c|}{23}       & 19    & \multicolumn{1}{c|}{7.0}      & 12          & 2.8            \\ \hline
    \end{tabular}
\caption{PCG iteration counts for V-cycle multigrid preconditioner
with different smoothers and smoothing steps in Example \ref{subsec:diffConvergeNum}.}
\label{tab:poissonSm}
\end{table}

\subsubsection{\red{Non-convex domain and jump diffusion coefficients on the coarsest mesh}}
\label{subsec:diffJumpNum}
In this example, we consider the reaction-diffusion equation with jump diffusion coefficients \red{on the coarsest mesh in a non-convex domain}. 
When $d = 2$, we denote $\Omega_1^{2D}$ as the domain formed by connecting points (0.5, 0.15), (0.65, 0.3), (0.5, 0.45), and (0.35, 0.3) in order. We then define $\Omega_2^{2D}:=([0,\; 1]\times[0,\; 0.6]) \backslash \Omega_{1}^{2D}$, and $\Omega_3^{2D}:=[0.2,\; 0.8]\times[0.6,\; 0.8]$. The domain $\Omega$ consists of these three subdomains, i.e. $\Omega = \Omega_1^{2D}\cup\Omega_2^{2D}\cup\Omega_3^{2D}$ as illustrated in the left panel of Fig. \ref{fig:chipDiff}. When $d = 3$, $\Omega_1^{3D} := \Omega_1^{2D}\times[0.225,\; 0.375]$, $\Omega_2^{3D} := \Omega_2^{2D}\times[0,\; 0.6]$, $\Omega_3^{3D} := \Omega_3^{2D}\times[0.2,\; 0.4]$, and $\Omega = \Omega_1^{3D}\cup\Omega_2^{3D}\cup\Omega_3^{3D}$. The diffusion coefficient $\alpha$ differs in different subdomains, which is set as 10 in $\Omega_1^{2D}/\Omega_1^{3D}$, 1 in $\Omega_2^{2D}/\Omega_2^{3D}$, and 1000 in $\Omega_3^{2D}/\Omega_3^{3D}$. The source term $f = 1$ in the center subdomain $\Omega_1^{2D}/\Omega_1^{3D}$, and $f = 0$ elsewhere. We assume a homogeneous Dirichlet boundary condition on the bottom and homogeneous Neumann boundary conditions on all other boundaries. We use the standard V-cycle multigrid algorithms with point Gauss-Seidel as smoother to precondition the conjugate gradient solver. The coarsest mesh is a triangulation of $\Omega$ respecting the subdomain boundaries, with the maximum element diameter not exceeding $1/4$ in both two-dimensional and three-dimensional cases.

\red{We first test the algorithms under uniform mesh refinements,}
and 
Table \ref{tab:chipDiffUniform} reports the PCG iteration counts 
with different mesh levels $J$, reaction coefficient $\beta$, and smoothing steps $m$ in both two-dimensional and three-dimensional cases. 
As observed from the results when $d = 3$, one-step and two-step Gauss-Seidel smoother are not enough to let Theorem \ref{thm:mg} hold, and the iteration counts grow with increasing facet DOFs. Other results verify the robustness of our preconditioner with respect to mesh size and mesh levels when the smoothing step $m$ is large enough \red{in the low-regularity cases}.  The iteration counts remain almost unchanged as $\beta$ varies. We note that when smoothing steps are the same, the iteration counts here with jump coefficients in the reaction-diffusion equation are obviously larger than in the previous case with mildly changing coefficients.

\red{
We next test with $\beta=0$ on adaptively refined mesh using a recovery-based error estimator\cite{cai2010recovery}. All other settings are the same. The right panel of Fig. \ref{fig:chipDiff} demonstrates the finest two-dimensional mesh. Fig. \ref{fig:chipDiffAdapt} reports the obtained PCG iteration counts, and similar results as those on uniformly-refined meshes are observed.
}

\begin{figure}[ht]
    \centering
    \includegraphics[height = .15\textheight]{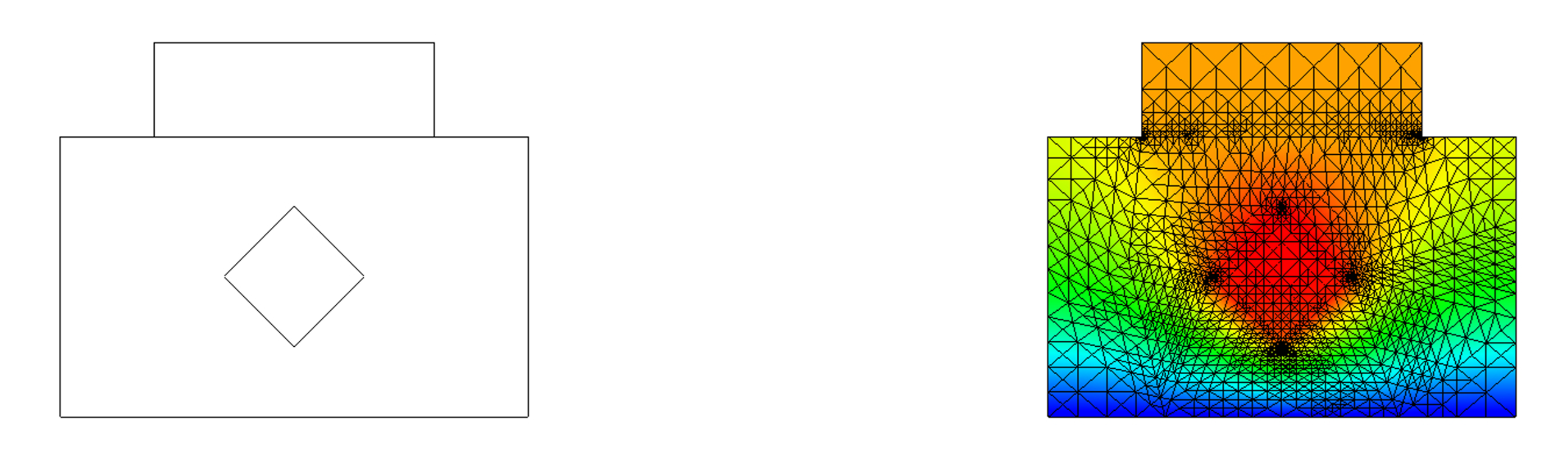}
    \caption{The domain of reaction-diffusion equation with jump diffusion coefficients on the coarsest mesh in Example \ref{subsec:diffJumpNum} (left), 
    \red{and the top-level adaptively refined mesh in two dimensions with $\beta=0$ (right).}}
    \label{fig:chipDiff}
\end{figure}

\begin{table}[ht]
    \setlength{\tabcolsep}{6pt}
	\begin{tabular}{|ccccccccccc|}
        \hline
        \multicolumn{11}{|c|}{$d=2$} \\ \hline
        \multicolumn{2}{|c|}{\multirow{2}{*}{Parameters}} & \multicolumn{3}{c|}{$\beta = 1000$} & \multicolumn{3}{c|}{$\beta = 1$} & \multicolumn{3}{c|}{$\beta =0$} \\ \cline{3-11} 
        \multicolumn{2}{|c|}{} & \multicolumn{1}{c|}{$m = 1$} & \multicolumn{1}{c|}{$m = 2$} & \multicolumn{1}{c|}{$m = 4$} & \multicolumn{1}{c|}{$m = 1$} & \multicolumn{1}{c|}{$m = 2$} & \multicolumn{1}{c|}{$m = 4$} & \multicolumn{1}{c|}{$m = 1$} & \multicolumn{1}{c|}{$m = 2$} & $m = 4$ \\ \hline
        \multicolumn{1}{|c|}{$J$} & \multicolumn{1}{c|}{Facet DOFs} & \multicolumn{9}{c|}{Iteration Count} \\ \hline
        \multicolumn{1}{|c|}{2} & \multicolumn{1}{c|}{$2.03 e2$} & 19 & 11 & \multicolumn{1}{c|}{8} & 21 & 13 & \multicolumn{1}{c|}{10} & 21 & 14 & 10 \\ \cline{1-2}
        \multicolumn{1}{|c|}{3} & \multicolumn{1}{c|}{$7.78 e2$} & 28 & 17 & \multicolumn{1}{c|}{10} & 34 & 19 & \multicolumn{1}{c|}{11} & 34 & 19 & 11 \\ \cline{1-2}
        \multicolumn{1}{|c|}{4} & \multicolumn{1}{c|}{$3.04 e3$} & 42 & 24 & \multicolumn{1}{c|}{11} & 44 & 27 & \multicolumn{1}{c|}{13} & 44 & 27 & 13 \\ \cline{1-2}
        \multicolumn{1}{|c|}{5} & \multicolumn{1}{c|}{$1.20 e4$} & 59 & 28 & \multicolumn{1}{c|}{12} & 61 & 31 & \multicolumn{1}{c|}{14} & 61 & 31 & 14 \\ \cline{1-2}
        \multicolumn{1}{|c|}{6} & \multicolumn{1}{c|}{$4.79 e4$} & 67 & 28 & \multicolumn{1}{c|}{12} & 72 & 31 & \multicolumn{1}{c|}{14} & 72 & 31 & 14 \\ \cline{1-2}
        \multicolumn{1}{|c|}{7} & \multicolumn{1}{c|}{$1.91 e5$} & 69 & 28 & \multicolumn{1}{c|}{12} & 73 & 31 & \multicolumn{1}{c|}{14} & 73 & 31 & 14 \\ \cline{1-2}
        \multicolumn{1}{|c|}{8} & \multicolumn{1}{c|}{$7.63 e5$} & 69 & 28 & \multicolumn{1}{c|}{11} & 72 & 31 & \multicolumn{1}{c|}{14} & 73 & 31 & 14 \\ \hline \hline
        \multicolumn{11}{|c|}{$d=3$} \\ \hline
        \multicolumn{2}{|c|}{\multirow{2}{*}{Parameters}} & \multicolumn{3}{c|}{$\beta=1000$} & \multicolumn{3}{c|}{$\beta=1$} & \multicolumn{3}{c|}{$\beta=0$} \\ \cline{3-11} 
        \multicolumn{2}{|c|}{} & \multicolumn{1}{c|}{$m = 1$} & \multicolumn{1}{c|}{$m = 2$} & \multicolumn{1}{c|}{$m = 4$} & \multicolumn{1}{c|}{$m = 1$} & \multicolumn{1}{c|}{$m = 2$} & \multicolumn{1}{c|}{$m = 4$} & \multicolumn{1}{c|}{$m = 1$} & \multicolumn{1}{c|}{$m = 2$} & $m = 4$ \\ \hline
        \multicolumn{1}{|c|}{$J$} & \multicolumn{1}{c|}{Facet DOFs} & \multicolumn{9}{c|}{Iteration Count} \\ \hline
        \multicolumn{1}{|c|}{2} & \multicolumn{1}{c|}{$1.20 e3$} & 16 & 10 & \multicolumn{1}{c|}{7} & 24 & 14 & \multicolumn{1}{c|}{10} & 24 & 14 & 10 \\ \cline{1-2}
        \multicolumn{1}{|c|}{3} & \multicolumn{1}{c|}{$3.73 e3$} & 22 & 12 & \multicolumn{1}{c|}{8} & 29 & 16 & \multicolumn{1}{c|}{11} & 29 & 16 & 11 \\ \cline{1-2}
        \multicolumn{1}{|c|}{4} & \multicolumn{1}{c|}{$1.10 e4$} & 30 & 16 & \multicolumn{1}{c|}{10} & 38 & 21 & \multicolumn{1}{c|}{13} & 38 & 21 & 13 \\ \cline{1-2}
        \multicolumn{1}{|c|}{5} & \multicolumn{1}{c|}{$3.04 e4$} & 45 & 24 & \multicolumn{1}{c|}{14} & 55 & 29 & \multicolumn{1}{c|}{16} & 54 & 28 & 16 \\ \cline{1-2}
        \multicolumn{1}{|c|}{6} & \multicolumn{1}{c|}{$8.10 e4$} & 66 & 30 & \multicolumn{1}{c|}{14} & 73 & 34 & \multicolumn{1}{c|}{17} & 73 & 34 & 17 \\ \cline{1-2}
        \multicolumn{1}{|c|}{7} & \multicolumn{1}{c|}{$2.14 e5$} & 89 & 39 & \multicolumn{1}{c|}{18} & 98 & 44 & \multicolumn{1}{c|}{20} & 98 & 44 & 20 \\ \cline{1-2}
        \multicolumn{1}{|c|}{8} & \multicolumn{1}{c|}{$5.43 e5$} & 107 & 42 & \multicolumn{1}{c|}{16} & 117 & 47 & \multicolumn{1}{c|}{19} & 119 & 46 & 19 \\ \hline
        \end{tabular}
    \caption{PCG iteration counts for V-cycle multigrid preconditioner with Gauss-Seidel smoother \red{and uniform mesh refinement}
    for the reaction-diffusion equation with jump coefficients in Example \ref{subsec:diffJumpNum}.}
    \label{tab:chipDiffUniform}
\end{table}

\begin{figure}[ht]
    \centering
    \includegraphics[width=.75\textwidth]{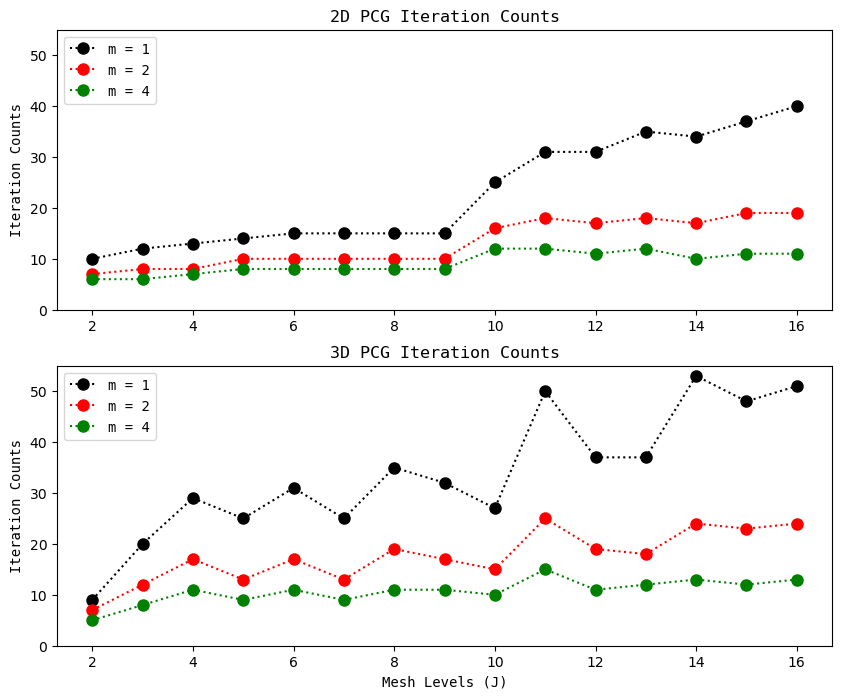}
    \caption{PCG iteration counts for V-cycle multigrid preconditioner with Gauss-Seidel smoother \red{and adaptive mesh refinement}
    for the reaction-diffusion equation with jump coefficients in Example \ref{subsec:diffJumpNum}.}
    \label{fig:chipDiffAdapt}
\end{figure}

\red{
\subsubsection{Jump diffusion coefficient on the finest mesh}
\label{subsec:ellipFineJumpNum}
We consider a reaction-diffusion equation with jump diffusion coefficient on the finest mesh level. For simplicity, we limit ourselves to the two-dimensional case and set the domain as a unit square $\Omega=[0, 1]^2$ with homogeneous Dirichlet boundary conditions on all sides. The source term is set as $f = 1$. The coarsest mesh is a structured triangulation of the domain $\Omega$ with $h=1/4$ and the hierarchical meshes are obtained by successive uniform refinement. We let the reaction coefficient $\beta = 1$, and the diffusion coefficient $\alpha$ changes alternatively on the finest mesh level $\mathcal{T}_J$ between the minimum value $\alpha_0$ ($\alpha_0 = 1$) and the maximum value $\alpha_1$, as demonstrated in the left panel of Fig. \ref{fig:diffChess}. We define the ratio $\rho:= \alpha_1/\alpha_0$. We note that the $\alpha_l^{-1}$ in the {\sf HDG-P0} scheme \eqref{ellipWeak0} requires the projection of $\alpha^{-1}$ onto piecewise constant space $W_l$ on each mesh level, therefore in this numerical experiment $\alpha_l^{-1}$ becomes global constant on coarser mesh levels for $l = 1,\dots, J-1$ and equals $2/(\rho + 1)$. 

We use the standard V-cycle multigrid algorithms with two-step point Gauss-Seidel as smoother to precondition the conjugate gradient solver, and Table \ref{tab:diffChess} reports the PCG iteration counts and the condition number with different ratio $\rho$ and different mesh levels $J$. The robustness of our proposed multigrid method with respect to mesh size and mesh levels is observed. However, as the ratio $\rho$ increases, the iteration counts and the conditioner numbers grow and are not bounded. We refer to \cite{zhu2014analysis, kolev2016multilevel} for further studies on multigrid for CR discretization with jump coefficients, where conforming piecewise linear finite element was used as the auxiliary space to construct multigrid methods.
}

\begin{figure}[ht]
    \centering
    \includegraphics[height = .15\textheight]{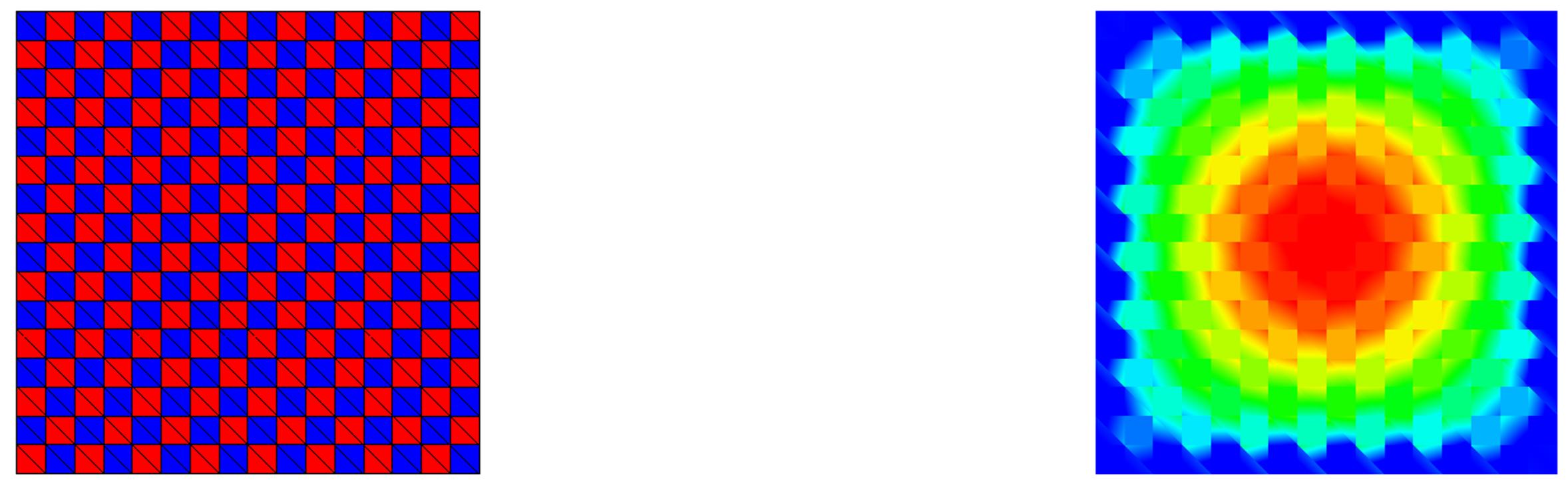}
    \caption{\red{The diffusion coefficient on the finest mesh level in Example \ref{subsec:ellipFineJumpNum} (left), 
    and the numerical solution with $J=3$ and $\rho=1e4$ (right).}}
    \label{fig:diffChess}
\end{figure}

\begin{table}[ht]
    \red{
    \begin{tabular}{|c|cccccccccc|}
    \hline
    \multirow{2}{*}{Parameters} & \multicolumn{10}{c|}{$\rho$}                                                                                                                                                             \\ \cline{2-11} 
                                & \multicolumn{2}{c|}{2}                & \multicolumn{2}{c|}{1e1}              & \multicolumn{2}{c|}{1e2}              & \multicolumn{2}{c|}{1e4}              & \multicolumn{2}{c|}{1e6} \\ \hline
    $J$                         & \# It & \multicolumn{1}{c|}{$\kappa$} & \# It & \multicolumn{1}{c|}{$\kappa$} & \# It & \multicolumn{1}{c|}{$\kappa$} & \# It & \multicolumn{1}{c|}{$\kappa$} & \# It     & $\kappa$     \\ \hline
    2                           & 9     & \multicolumn{1}{c|}{1.6}      & 16    & \multicolumn{1}{c|}{3.8}      & 22    & \multicolumn{1}{c|}{3.1e1}    & 32    & \multicolumn{1}{c|}{2.6e3}    & 37        & 2.4e5        \\ \cline{1-1}
    3                           & 10    & \multicolumn{1}{c|}{1.7}      & 17    & \multicolumn{1}{c|}{4.1}      & 28    & \multicolumn{1}{c|}{3.3e1}    & 39    & \multicolumn{1}{c|}{2.8e3}    & 49        & 2.4e5        \\ \cline{1-1}
    4                           & 10    & \multicolumn{1}{c|}{1.8}      & 17    & \multicolumn{1}{c|}{4.3}      & 31    & \multicolumn{1}{c|}{3.5e1}    & 47    & \multicolumn{1}{c|}{3.1e3}    & 58        & 2.4e5        \\ \cline{1-1}
    5                           & 11    & \multicolumn{1}{c|}{1.8}      & 17    & \multicolumn{1}{c|}{4.3}      & 31    & \multicolumn{1}{c|}{3.5e1}    & 54    & \multicolumn{1}{c|}{3.2e3}    & 69        & 2.5e5        \\ \cline{1-1}
    6                           & 11    & \multicolumn{1}{c|}{1.8}      & 17    & \multicolumn{1}{c|}{4.3}      & 31    & \multicolumn{1}{c|}{3.5e1}    & 56    & \multicolumn{1}{c|}{3.2e3}    & 71        & 2.5e5        \\ \cline{1-1}
    7                           & 11    & \multicolumn{1}{c|}{1.8}      & 17    & \multicolumn{1}{c|}{4.3}      & 31    & \multicolumn{1}{c|}{3.5e1}    & 56    & \multicolumn{1}{c|}{3.4e3}    & 72        & 2.5e5        \\ \cline{1-1}
    8                           & 11    & \multicolumn{1}{c|}{1.8}      & 17    & \multicolumn{1}{c|}{4.3}      & 31    & \multicolumn{1}{c|}{3.5e1}    & 56    & \multicolumn{1}{c|}{3.4e3}    & 73        & 2.5e5        \\ \hline
    \end{tabular}}
    \caption{PCG iteration counts and condition numbers for V-cycle multigrid preconditioner with Gauss-Seidel smoother for the reaction-diffusion equation with jump coefficients on the finest mesh level in Example \ref{subsec:ellipFineJumpNum}.}
    \label{tab:diffChess}
\end{table}

\subsection{Generalized Stokes equations}
\subsubsection{Manufactured solution}
\label{subsec:stokesConvergeNum}
We first verify the optimal convergence rates of the {\sf HDG-P0} scheme \eqref{stokesWeak0} for the generalized Stokes equations with known solutions.
We set the coefficients $\mu = 1$, $\beta = 10$ and the exact solution 
\begin{align*}
	&\left.
		\begin{array}{l l}
			u_x & = x^2 (x - 1)^2 2y(1 - y)(2y - 1) \\
			u_y & = y^2 (y - 1)^2 2 x(x - 1)(2x - 1)\\
			p & = x(1 - x)(1 - y) - 1/12
		\end{array}
	\right\} \;\text{when $d = 2$,}
\end{align*}
and
\begin{align*}
	&\left.
		\begin{array}{l l}
			u_x & = x^2(x - 1)^2 (2 y - 6y^2 + 4 y^3)(2 z - 6z^2 + 4 z^3) \\
			u_y & = y^2 (y - 1)^2 (2x - 6x^2 + 4x^3) (2z - 6z^2 + 4z^3)\\
			u_z & = -2 z^2 (z - 1)^2 (2x - 6x^2 + 4x^3) (2y - 6y^2 + 4y^3)\\
			p & = x(1 - x)(1 - y)(1 - z) - 1/24
		\end{array}
	\right\} \;\text{when $d = 3$,}
\end{align*}
with all other settings the same as in Example \ref{subsec:diffConvergeNum}.
\red{Table \ref{tab:stokesRate} reports the values and the corresponding EOC of the discrete $L_2$ norms $\|u_h - u\|_{0}$, $\|\nabla\cdot\underline{u}_h\|_0$, and $\|\dunderline{L_h} - \dunderline{L}\|_{0}$. Optimal convergence rates of $\underline{u}_h$ and $\dunderline{L}_h$ are observed. Since the divergence-free constraint on velocity is imposed weakly in the {\sf HDG-P0} scheme, and the normal components of $u_h$ are not necessarily continuous across mesh facets, the obtained $u_h$ is not exactly divergence-free and the divergence error has first order convergence rate.}

\red{
We have also tested the W-cycle multigrid method in Algorithm \ref{alg:stokesMG} as the iteration solver for the problem. The coarsest mesh is a triangulation of $\Omega$ with the maximum element diameter less than $1/4$ when $d=2$ and less than $1/2$ when $d=3$, followed by uniform refinement. Table \ref{tab:stokesSolver} reports the obtained iteration counts with different smothers and smoothing steps in the multigrid algorithm, where we denote B-JAC as the damped vertex-block Jacobi smoother with the damping parameter set as 0.4, and B-GS as the vertex-block Gauss-Seidel smoother. As observed from the results, when the smoothing steps are not large enough, the operator $\underline{\mathbb{E}}_{l,m}$ in Theorem \ref{thm:stokesMG} is no longer a reducer, and the needed smoothing steps in three dimensions for the W-cycle iteration to converge are larger than those in two dimensions. Other results verify the robustness of the W-cycle iteration solver with respect to the mesh size and mesh level.
}


\begin{table}[ht]
\red{
    \begin{tabular}{|c|cccccc|}
    \hline
    \multirow{2}{*}{Level} & \multicolumn{6}{c|}{$d=2$}                                                                                                                                                                                                                       \\ \cline{2-7} 
                           & $\|\underline{u}_h - \underline{u}\|_{0}$ & \multicolumn{1}{c|}{EOC}          & $\|\nabla\cdot\underline{u}_h\|_{0}$ & \multicolumn{1}{c|}{EOC}          & $\|\dunderline{L}_h - \dunderline{L}\|_{0}$ & EOC          \\ \hline
    1                      & $5.87 e{-3}$                              & \multicolumn{1}{c|}{$\backslash$} & $1.76 e{-2}$                                                    & \multicolumn{1}{c|}{$\backslash$} & $3.37 e{-2}$                                & $\backslash$ \\ \cline{1-1}
    2                      & $1.67 e{-3}$                              & \multicolumn{1}{c|}{1.81}         & $9.56 e{-3}$                                                    & \multicolumn{1}{c|}{0.88}         & $1.94 e{-2}$                                & 0.79         \\ \cline{1-1}
    3                      & $4.42 e{-4}$                              & \multicolumn{1}{c|}{1.92}         & $4.94 e{-3}$                                                    & \multicolumn{1}{c|}{0.95}         & $1.02 e{-2}$                                & 0.93         \\ \cline{1-1}
    4                      & $1.13 e{-4}$                              & \multicolumn{1}{c|}{1.97}         & $2.49 e{-3}$                                                    & \multicolumn{1}{c|}{0.99}         & $5.19 e{-3}$                                & 0.98         \\ \cline{1-1}
    5                      & $2.83 e{-5}$                              & \multicolumn{1}{c|}{1.99}         & $1.25 e{-3}$                                                    & \multicolumn{1}{c|}{1.00}         & $2.61 e{-3}$                                & 0.99         \\ \hline\hline
    \multirow{2}{*}{Level} & \multicolumn{6}{c|}{$d=3$}                                                                                                                                                                                                                       \\ \cline{2-7} 
                           & $\|\underline{u}_h - \underline{u}\|_{0}$ & \multicolumn{1}{c|}{EOC}          & $\|\nabla\cdot\underline{u}_h\|_{0}$ & \multicolumn{1}{c|}{EOC}          & $\|\dunderline{L}_h - \dunderline{L}\|_{0}$ & EOC          \\ \hline
    1                      & $1.74 e{-3}$                              & \multicolumn{1}{c|}{$\backslash$} & $6.53 e{-3}$                                                    & \multicolumn{1}{c|}{$\backslash$} & $1.34 e{-2}$                                & $\backslash$ \\ \cline{1-1}
    2                      & $5.03 e{-4}$                              & \multicolumn{1}{c|}{1.79}         & $3.20e{-3}$                                                     & \multicolumn{1}{c|}{1.03}         & $8.05 e{-3}$                                & 0.74         \\ \cline{1-1}
    3                      & $1.35 e{-4}$                              & \multicolumn{1}{c|}{1.90}         & $1.59 e{-3}$                                                    & \multicolumn{1}{c|}{1.01}         & $4.29 e{-3}$                                & 0.91         \\ \cline{1-1}
    4                      & $3.45 e{-5}$                              & \multicolumn{1}{c|}{1.96}         & $7.94 e{-4}$                                                    & \multicolumn{1}{c|}{1.00}         & $2.19 e{-3}$                                & 0.97         \\ \cline{1-1}
    5                      & $8.69 e{-6}$                              & \multicolumn{1}{c|}{1.99}         & $3.97 e{-4}$                                                    & \multicolumn{1}{c|}{1.00}         & $1.10 e{-3}$                                & 0.99         \\ \hline
    \end{tabular}
    \caption{Estimated convergence rates of the {\sf HDG-P0} scheme solved by one-step augmented Lagrangian Uzawa iteration in Example \ref{subsec:stokesConvergeNum}.}
    \label{tab:stokesRate}}
\end{table}

\begin{table}[ht]
    \red{
    \begin{tabular}{|cccccccc|}
    \hline
    \multicolumn{8}{|c|}{$d=2$}                                                                                                                  \\ \hline
    \multicolumn{2}{|c|}{\multirow{2}{*}{Parameters}}           & \multicolumn{3}{c|}{B-JAC}                       & \multicolumn{3}{c|}{B-GS}   \\ \cline{3-8} 
    \multicolumn{2}{|c|}{}    & $m = 1$ & $m = 2$ & \multicolumn{1}{c|}{$m = 4$} & $m = 1$ & $m = 2$ & $m = 4$ \\ \hline
    \multicolumn{1}{|c|}{$J$} & \multicolumn{1}{c|}{Facet DOFs} & \multicolumn{6}{c|}{Iteration Count}                                           \\ \hline
    \multicolumn{1}{|c|}{2}   & \multicolumn{1}{c|}{$3.76 e2$}  & 53      & 29      & \multicolumn{1}{c|}{20}      & 27      & 17      & 12      \\ \cline{1-2}
    \multicolumn{1}{|c|}{3}   & \multicolumn{1}{c|}{$1.57 e3$}  & 22      & 20      & \multicolumn{1}{c|}{17}      & 17      & 13      & 11      \\ \cline{1-2}
    \multicolumn{1}{|c|}{4}   & \multicolumn{1}{c|}{$6.40 e3$}  & 46      & 36      & \multicolumn{1}{c|}{25}      & 30      & 16      & 11      \\ \cline{1-2}
    \multicolumn{1}{|c|}{5}   & \multicolumn{1}{c|}{$2.59 e4$}  & N/A     & 62      & \multicolumn{1}{c|}{31}      & 26      & 14      & 11      \\ \cline{1-2}
    \multicolumn{1}{|c|}{6}   & \multicolumn{1}{c|}{$1.04 e5$}  & N/A     & 78      & \multicolumn{1}{c|}{36}      & 33      & 15      & 10      \\ \cline{1-2}
    \multicolumn{1}{|c|}{7}   & \multicolumn{1}{c|}{$4.17 e5$}  & N/A     & 72      & \multicolumn{1}{c|}{31}      & 30      & 11      & 8       \\ \cline{1-2}
    \multicolumn{1}{|c|}{8}   & \multicolumn{1}{c|}{$1.67 e6$}  & N/A     & 86      & \multicolumn{1}{c|}{24}      & 21      & 11      & 7       \\ \hline\hline
    \multicolumn{8}{|c|}{$d=3$}                                                                                                                  \\ \hline
    \multicolumn{2}{|c|}{\multirow{2}{*}{Parameters}}           & \multicolumn{3}{c|}{B-JAC}                       & \multicolumn{3}{c|}{B-GS}   \\ \cline{3-8} 
    \multicolumn{2}{|c|}{}    & $m = 1$ & $m = 4$ & \multicolumn{1}{c|}{$m = 8$} & $m = 1$ & $m = 4$ & $m = 8$ \\ \hline
    \multicolumn{1}{|c|}{$J$} & \multicolumn{1}{c|}{Facet DOFs} & \multicolumn{6}{c|}{Iteration Count}                                           \\ \hline
    \multicolumn{1}{|c|}{2}   & \multicolumn{1}{c|}{$1.41 e3$}  & N/A     & 23      & \multicolumn{1}{c|}{9}       & 10      & 5       & 4       \\ \cline{1-2}
    \multicolumn{1}{|c|}{3}   & \multicolumn{1}{c|}{$4.61 e3$}  & N/A     & 8       & \multicolumn{1}{c|}{6}       & 64      & 5       & 4       \\ \cline{1-2}
    \multicolumn{1}{|c|}{4}   & \multicolumn{1}{c|}{$1.44 e4$}  & N/A     & 18      & \multicolumn{1}{c|}{9}       & N/A     & 7       & 6       \\ \cline{1-2}
    \multicolumn{1}{|c|}{5}   & \multicolumn{1}{c|}{$3.89 e4$}  & N/A     & 8       & \multicolumn{1}{c|}{6}       & N/A     & 6       & 6       \\ \cline{1-2}
    \multicolumn{1}{|c|}{6}   & \multicolumn{1}{c|}{$1.06 e5$}  & N/A     & 13      & \multicolumn{1}{c|}{7}       & N/A     & 7       & 6       \\ \cline{1-2}
    \multicolumn{1}{|c|}{7}   & \multicolumn{1}{c|}{$2.74 e5$}  & N/A     & 9       & \multicolumn{1}{c|}{6}       & N/A     & 6       & 5       \\ \cline{1-2}
    \multicolumn{1}{|c|}{8}   & \multicolumn{1}{c|}{$7.00 e5$}  & N/A     & 14      & \multicolumn{1}{c|}{6}       & N/A     & 6       & 5       \\ \hline
    \end{tabular}
    }
\caption{Iteration counts of W-cycle multigrid iteration solver
with different smoothers and smoothing steps in Example \ref{subsec:stokesConvergeNum}.}
\label{tab:stokesSolver}
\end{table}

\subsubsection{Lid-driven cavity}
\label{subsec:stokesLidNum}
In this example, we test the robustness of the proposed multigrid preconditioners for the 
the lid-driven cavity problem, where the computational domain is a unit square/cube $\Omega=[0,1]^d$. We assume an inhomogeneous Dirichlet boundary condition $\underline{u}=[4x(1-x),\; 0]^\trans$ when $d=2$, or $\underline{u}=[16x(1-x)y(1-y),\; 0,\; 0]^\trans$ when $d=3$ on the top side, and no-slip boundary conditions on the remaining domain boundaries. The source term $\underline{f}=\underline{0}$. 
We use the PCG method preconditioned by W-cycle and variable V-cycle  multigrid (with smoothing steps $m(l)=2^{J-l}m(J)$) in Algorithm \ref{alg:stokesMG} to solve the augmented Lagrangian Uzawa iteration for the condensed {\sf HDG-P0} scheme \eqref{stOpEq-ALU1}. We use vertex-patched block Gauss-Seidel as smoothers in multigrid algorithms to avoid damping parameters.
The coarsest mesh is a triangulation of $\Omega$ with the maximum element diameter less than $1/4$ when $d=2$, or less than $1/2$ when $d=3$.

Table \ref{tab:stokesLid} reports the PCG iteration counts \red{on uniformly refined meshes}
with different domain dimensions $d$, mesh levels $J$, the low order term coefficient $\beta$, and the smoothing step on the finest mesh $m(J)$. As observed from the results, when $d=2$ and $\beta=1000$, the solver preconditioned by the W-cycle algorithm with $m(J)=2$ fails. This is due to the fact that the linear operator of the W-cycle multigrid with only two smoothing steps on each level becomes indefinite and cannot be used as a preconditioner, see 
\cite[Section 4]{bramble1991analysis}. On the other hand, variable V-cycle multigrid is more robust and works with $m(J)=1$. Other results verify the robustness of our multigrid preconditioner with respect to mesh size and mesh levels.

\begin{table}[ht]
    \begin{tabular}{|cccccccccccccc|}
    \hline
    \multicolumn{14}{|c|}{$d=2$} \\ \hline
    \multicolumn{2}{|c|}{Multigrid} & \multicolumn{6}{c|}{Variable V cycle} & \multicolumn{6}{c|}{W cycle} \\ \hline
    \multicolumn{2}{|c|}{$\beta$} & \multicolumn{2}{c|}{1000} & \multicolumn{2}{c|}{1} & \multicolumn{2}{c|}{0} & \multicolumn{2}{c|}{1000} & \multicolumn{2}{c|}{1} & \multicolumn{2}{c|}{0} \\ \hline
    \multicolumn{2}{|c|}{$m(J)$} & 1 & \multicolumn{1}{c|}{2} & 1 & \multicolumn{1}{c|}{2} & 1 & \multicolumn{1}{c|}{2} & 2 & \multicolumn{1}{c|}{4} & 2 & \multicolumn{1}{c|}{4} & 2 & 4 \\ \hline
    \multicolumn{1}{|c|}{$J$} & \multicolumn{1}{c|}{Facet DOFs} & \multicolumn{12}{c|}{Iteration Counts} \\ \hline
    \multicolumn{1}{|c|}{2} & \multicolumn{1}{c|}{$3.76 e2$} & 13 & 10 & 12 & 10 & 12 & \multicolumn{1}{c|}{10} & N/A & 8 & 10 & 8 & 10 & 8 \\ \cline{1-2}
    \multicolumn{1}{|c|}{3} & \multicolumn{1}{c|}{$1.57 e3$} & 18 & 14 & 15 & 12 & 15 & \multicolumn{1}{c|}{12} & N/A & 12 & 10 & 9 & 10 & 9 \\ \cline{1-2}
    \multicolumn{1}{|c|}{4} & \multicolumn{1}{c|}{$6.40 e3$} & 20 & 16 & 17 & 13 & 17 & \multicolumn{1}{c|}{13} & N/A & 11 & 11 & 9 & 11 & 9 \\ \cline{1-2}
    \multicolumn{1}{|c|}{5} & \multicolumn{1}{c|}{$2.59 e4$} & 20 & 15 & 18 & 14 & 18 & \multicolumn{1}{c|}{14} & N/A & 9 & 11 & 10 & 11 & 10 \\ \cline{1-2}
    \multicolumn{1}{|c|}{6} & \multicolumn{1}{c|}{$1.04 e5$} & 20 & 15 & 19 & 15 & 19 & \multicolumn{1}{c|}{15} & N/A & 9 & 11 & 9 & 11 & 9 \\ \cline{1-2}
    \multicolumn{1}{|c|}{7} & \multicolumn{1}{c|}{$4.17 e5$} & 20 & 15 & 20 & 15 & 20 & \multicolumn{1}{c|}{15} & N/A & 9 & 11 & 9 & 11 & 9 \\ \cline{1-2}
    \multicolumn{1}{|c|}{8} & \multicolumn{1}{c|}{$1.67 e6$} & 21 & 15 & 21 & 15 & 21 & \multicolumn{1}{c|}{15} & N/A & 9 & 11 & 9 & 11 & 9 \\ \hline \hline
    \multicolumn{14}{|c|}{$d=3$} \\ \hline
    \multicolumn{2}{|c|}{Multigrid} & \multicolumn{6}{c|}{Variable V cycle} & \multicolumn{6}{c|}{W cycle} \\ \hline
    \multicolumn{2}{|c|}{$\beta$} & \multicolumn{2}{c|}{1000} & \multicolumn{2}{c|}{1} & \multicolumn{2}{c|}{0} & \multicolumn{2}{c|}{1000} & \multicolumn{2}{c|}{1} & \multicolumn{2}{c|}{0} \\ \hline
    \multicolumn{2}{|c|}{$m(J)$} & 1 & \multicolumn{1}{c|}{2} & 1 & \multicolumn{1}{c|}{2} & 1 & \multicolumn{1}{c|}{2} & 2 & \multicolumn{1}{c|}{4} & 2 & \multicolumn{1}{c|}{4} & 2 & 4 \\ \hline
    \multicolumn{1}{|c|}{$J$} & \multicolumn{1}{c|}{Facet DOFs} & \multicolumn{12}{c|}{Iteration Counts} \\ \hline
    \multicolumn{1}{|c|}{2} & \multicolumn{1}{c|}{$1.41 e3$} & 10 & 7 & 11 & 8 & 11 & \multicolumn{1}{c|}{8} & 7 & 5 & 8 & 6 & 8 & 6 \\ \cline{1-2}
    \multicolumn{1}{|c|}{3} & \multicolumn{1}{c|}{$4.61 e3$} & 11 & 9 & 11 & 9 & 11 & \multicolumn{1}{c|}{9} & 8 & 7 & 8 & 7 & 8 & 7 \\ \cline{1-2}
    \multicolumn{1}{|c|}{4} & \multicolumn{1}{c|}{$1.44 e4$} & 12 & 10 & 12 & 9 & 12 & \multicolumn{1}{c|}{9} & 9 & 8 & 9 & 8 & 9 & 8 \\ \cline{1-2}
    \multicolumn{1}{|c|}{5} & \multicolumn{1}{c|}{$3.89 e4$} & 13 & 11 & 12 & 10 & 12 & \multicolumn{1}{c|}{10} & 8 & 8 & 8 & 7 & 8 & 7 \\ \cline{1-2}
    \multicolumn{1}{|c|}{6} & \multicolumn{1}{c|}{$1.06 e5$} & 14 & 12 & 12 & 10 & 13 & \multicolumn{1}{c|}{10} & 8 & 8 & 8 & 7 & 8 & 7 \\ \cline{1-2}
    \multicolumn{1}{|c|}{7} & \multicolumn{1}{c|}{$2.74 e5$} & 14 & 12 & 12 & 10 & 12 & \multicolumn{1}{c|}{10} & 8 & 7 & 8 & 7 & 8 & 7 \\ \cline{1-2}
    \multicolumn{1}{|c|}{8} & \multicolumn{1}{c|}{$7.00 e5$} & 14 & 12 & 13 & 10 & 13 & \multicolumn{1}{c|}{10} & 8 & 8 & 9 & 7 & 9 & 7 \\  \hline
    \end{tabular}
    \caption{Iteration counts of the preconditioned conjugate gradient solver for the generalized Stokes equations in lid-driven cavity problems in Example \ref{subsec:stokesLidNum}.}
    \label{tab:stokesLid}
\end{table}

\subsubsection{Backward-facing step flow}
\label{subsec:stokesBackNum}
Finally we test the robustness of the proposed multigrid preconditioners for the 
backward-facing step flow problem, with the \red{non-convex L-shaped domain} $\Omega=([0.5,5]\times[0,0.5])\cup([0,5]\times[0.5,1])$ when $d=2$, or $\Omega=\left(([0.5,5]\times[0,0.5])\cup([0,5]\times[0.5,1])\right)\times[0,1]$ when $d=3$. We assume an inhomogeneous Dirichlet boundary condition $\underline{u}=[16(1-y)(y-0.5),\; 0]^\trans$ when $d=2$, or $\underline{u}=[64(1-y)(y-0.5)z(1-z),\; 0,\; 0]^\trans$ when $d=3$ for the inlet flow on $\{x=0\}$, with do-nothing boundary condition on $\{x=5\}$ and no-slip boundary conditions on the remaining sides.
The maximum element diameter is less than $1/2$ in both two-dimensional and three-dimensional cases. Other settings are the same as in Example \ref{subsec:stokesLidNum}. 

Table \ref{tab:stokesBack} reports the PCG iteration counts
\red{under uniform mesh refinements}.
Similar results are observed as in the previous example.
In particular, variable V-cycle algorithm with $m(J)=1$ leads to a robust preconditioner.
\red{Next we test with $\beta=0$ on adaptively refined meshes using a  recovery-based error estimator as in Example \ref{subsec:diffJumpNum}. 
Fig. \ref{fig:stokesBack-adapt} reports the obtained PCG iteration counts, and similar results as those on the uniformly-refined meshes are observed.
We note that although the multigrid analysis in Theorem \ref{thm:stokesMG} requires full elliptic regularity, our proposed multigrid method works well in this low-regularity case.}



\begin{table}[ht]
    \begin{tabular}{|cccccccccccccc|}
    \hline
    \multicolumn{14}{|c|}{$d=2$} \\ \hline
    \multicolumn{2}{|c|}{Multigrid} & \multicolumn{6}{c|}{Variable V cycle} & \multicolumn{6}{c|}{W cycle} \\ \hline
    \multicolumn{2}{|c|}{$\beta$} & \multicolumn{2}{c|}{1000} & \multicolumn{2}{c|}{1} & \multicolumn{2}{c|}{0} & \multicolumn{2}{c|}{1000} & \multicolumn{2}{c|}{1} & \multicolumn{2}{c|}{0} \\ \hline
    \multicolumn{2}{|c|}{$m(J)$} & 1 & \multicolumn{1}{c|}{2} & 1 & \multicolumn{1}{c|}{2} & 1 & \multicolumn{1}{c|}{2} & 4 & \multicolumn{1}{c|}{6} & 4 & \multicolumn{1}{c|}{6} & 4 & 6 \\ \hline
    \multicolumn{1}{|c|}{$J$} & \multicolumn{1}{c|}{Facet DOFs} & \multicolumn{12}{c|}{Iteration Counts} \\ \hline
    \multicolumn{1}{|c|}{2} & \multicolumn{1}{c|}{$3.92 e2$} & 9 & 7 & 9 & 7 & 9 & \multicolumn{1}{c|}{7} & N/A & 4 & 6 & 5 & 6 & 6 \\ \cline{1-2}
    \multicolumn{1}{|c|}{3} & \multicolumn{1}{c|}{$1.65 e3$} & 13 & 10 & 10 & 8 & 10 & \multicolumn{1}{c|}{8} & N/A & 7 & 6 & 6 & 6 & 6 \\ \cline{1-2}
    \multicolumn{1}{|c|}{4} & \multicolumn{1}{c|}{$6.75 e3$} & 16 & 13 & 11 & 8 & 11 & \multicolumn{1}{c|}{8} & N/A & 10 & 6 & 6 & 6 & 6 \\ \cline{1-2}
    \multicolumn{1}{|c|}{5} & \multicolumn{1}{c|}{$2.73 e4$} & 18 & 14 & 11 & 9 & 11 & \multicolumn{1}{c|}{9} & N/A & 7 & 6 & 6 & 6 & 6 \\ \cline{1-2}
    \multicolumn{1}{|c|}{6} & \multicolumn{1}{c|}{$1.10 e5$} & 17 & 12 & 12 & 9 & 12 & \multicolumn{1}{c|}{9} & N/A & 6 & 6 & 6 & 6 & 6 \\ \cline{1-2}
    \multicolumn{1}{|c|}{7} & \multicolumn{1}{c|}{$4.41 e5$} & 15 & 11 & 12 & 9 & 12 & \multicolumn{1}{c|}{9} & N/A & 6 & 6 & 6 & 6 & 6 \\ \cline{1-2}
    \multicolumn{1}{|c|}{8} & \multicolumn{1}{c|}{$1.77 e6$} & 13 & 10 & 12 & 9 & 12 & \multicolumn{1}{c|}{9} & N/A & 6 & 6 & 6 & 6 & 6 
    \\ \hline \hline
    \multicolumn{14}{|c|}{$d=3$} \\ \hline
    \multicolumn{2}{|c|}{Multigrid} & \multicolumn{6}{c|}{Variable V cycle} & \multicolumn{6}{c|}{W cycle} \\ \hline
    \multicolumn{2}{|c|}{$\beta$} & \multicolumn{2}{c|}{1000} & \multicolumn{2}{c|}{1} & \multicolumn{2}{c|}{0} & \multicolumn{2}{c|}{1000} & \multicolumn{2}{c|}{1} & \multicolumn{2}{c|}{0} \\ \hline
    \multicolumn{2}{|c|}{$m(J)$} & 1 & \multicolumn{1}{c|}{2} & 1 & \multicolumn{1}{c|}{2} & 1 & \multicolumn{1}{c|}{2} & 4 & \multicolumn{1}{c|}{6} & 4 & \multicolumn{1}{c|}{6} & 4 & 6 \\ \hline
    \multicolumn{1}{|c|}{$J$} & \multicolumn{1}{c|}{Facet DOFs} & \multicolumn{12}{c|}{Iteration Counts} \\ \hline
    \multicolumn{1}{|c|}{2} & \multicolumn{1}{c|}{$3.02 e3$} & 7 & 5 & 6 & 5 & 6 & \multicolumn{1}{c|}{5} & 4 & 3 & 4 & 4 & 4 & 4 \\ \cline{1-2}
    \multicolumn{1}{|c|}{3} & \multicolumn{1}{c|}{$8.55 e3$} & 7 & 6 & 6 & 5 & 6 & \multicolumn{1}{c|}{5} & 5 & 4 & 5 & 4 & 5 & 4 \\ \cline{1-2}
    \multicolumn{1}{|c|}{4} & \multicolumn{1}{c|}{$2.47 e4$} & 8 & 7 & 8 & 6 & 8 & \multicolumn{1}{c|}{6} & 6 & 5 & 5 & 5 & 5 & 5 \\ \cline{1-2}
    \multicolumn{1}{|c|}{5} & \multicolumn{1}{c|}{$6.88 e4$} & 9 & 7 & 7 & 6 & 7 & \multicolumn{1}{c|}{6} & 6 & 5 & 5 & 5 & 5 & 5 \\ \cline{1-2}
    \multicolumn{1}{|c|}{6} & \multicolumn{1}{c|}{$1.82 e5$} & 10 & 8 & 8 & 7 & 8 & \multicolumn{1}{c|}{7} & 6 & 6 & 5 & 5 & 5 & 5 \\ \cline{1-2}
    \multicolumn{1}{|c|}{7} & \multicolumn{1}{c|}{$4.78 e5$} & 10 & 8 & 8 & 7 & 8 & \multicolumn{1}{c|}{7} & 6 & 6 & 5 & 5 & 5 & 5 \\ \cline{1-2}
    \multicolumn{1}{|c|}{8} & \multicolumn{1}{c|}{$1.22 e6$} & 10 & 9 & 8 & 7 & 8 & \multicolumn{1}{c|}{7} & 5 & 5 & 5 & 5 & 5 & 5 \\ \hline
    \end{tabular}
        \caption{Iteration counts of the preconditioned conjugate gradient solver for the generalized Stokes equations in backward-facing step flow problems \red{with uniformly refined mesh} in Example \ref{subsec:stokesBackNum}.}
        \label{tab:stokesBack}
\end{table}

\begin{figure}[ht]
    \centering
    \includegraphics[width=.75\textwidth]{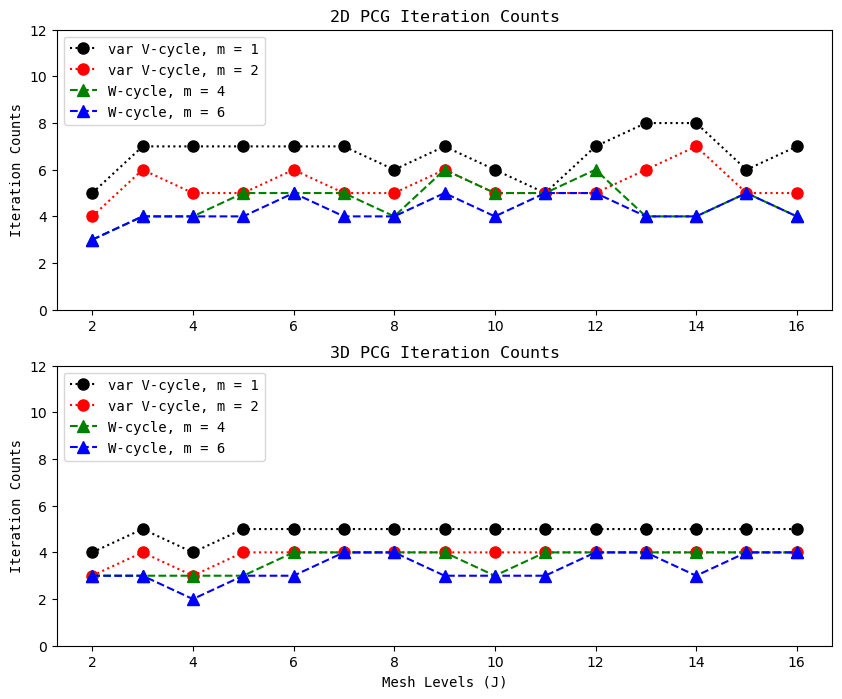}
    \caption{Iteration counts of the preconditioned conjugate gradient solver for the generalized Stokes equations in backward-facing step flow problems \red{with adaptively refined mesh} in Example \ref{subsec:stokesBackNum}.}
    \label{fig:stokesBack-adapt}
\end{figure}

\section{Conclusion}
\label{sec:conclude}
In this study, we present the lowest order HDG schemes with projected jumps and numerical integration ({\sf HDG-P0}) for the reaction-diffusion equation and the generalized Stokes equations, prove their optimal a priori error analysis, and construct optimal multigrid algorithms for the condensed {\sf HDG-P0} schemes for the two sets of equations \red{on conforming simplicial meshes}. The key idea of constructing the optimal multigrid is the equivalence between the condensed {\sf HDG-P0} scheme and the (slightly modified) CR discretization, which enables us to follow the rich literature on multigrid algorithms for CR discretizations to design robust multigrid schemes for {\sf HDG-P0}. Numerical experiments are presented with the proposed multigrid algorithms as \red{both iterative solvers and preconditioners} to solve the condensed {\sf HDG-P0} schemes, where optimal $L_2$ norm convergence rates are obtained, and the condition number of the preconditioned operators are bounded independent of mesh level and mesh size. We further note that the proposed $h$-multigrid algorithms for the condensed {\sf HDG-P0} schemes in this study can be used as a building block to construct robust preconditioners for 
higher-order HDG schemes using the auxiliary space preconditioning technique \cite{Xu96}, which will be our forthcoming research.


\section*{Appendix: Proof of \eqref{dual-q}}
\red{
Let $(\underline{\psi}, \phi)\in \underline{H}^1(\Omega) \times (H^2(\Omega)\cap H^1_0(\Omega))$ be the solution to the following dual problem 
together with the regularity assumption as in \eqref{reg}:
\begin{align}
    \label{DDD}
    \alpha^{-1}\underline{\psi} + \nabla\phi = 0, \quad
    \nabla\cdot\underline{\psi} + \beta\phi = e_u,\quad
    \text{in $\Omega$}.
\end{align}
}
We define $\varPi_V$ and $\varPi_W$ as $L_2$-projection onto the finite element space $V_h^0$ and $\underline{W}_h$. To further simplify notation, we denote
\[
    \phi_h:= \varPi_V\phi,\quad
    \psi_h:= \varPi_W\psi.
\]
Moreover, by continuity of the $L_2$-projection and the continuous dependency result on $\phi$, there holds:
\begin{align}
    \label{cont}
\|\phi_h\|_{1,h}\lesssim \|\phi\|_1\lesssim \alpha_0^{-1} \|e_u\|_0,
\end{align}
where the norm $\|\phi_h\|_{1,h}:=\sqrt{\sum_{K\in\Th}
\|\phi_h\|_{0,K}^2+\|\nabla\phi_h\|_{0,K}^2}$.

Using the definition of the dual problem \eqref{DDD}, we have, for any $\underline\psi_h\in \underline{W}_h$ and $\phi_h\in V_h$,
\begin{align*}
    \|e_u\|_0^2
    =&\;(e_u, \nabla\cdot\underline\psi +\beta \phi)_{\Th}\\ 
    =&\;(e_u, \nabla\cdot(\underline\psi-\underline{\psi}_h))_{\Th} +(e_u,\beta (\phi-\phi_h))_{\Th}
    +
    (e_u, \nabla\cdot \underline{\psi}_h)_{\Th} +(\beta e_u,\phi_h)_{\Th}.
\end{align*}
Taking $\underline r_h=\underline \psi_h$ in \eqref{ellipErrEqn1}, we get 
\begin{align*}
    (e_u, \nabla\cdot \underline{\psi}_h)_{\Th}
= &\;
 (\alpha^{-1}\underline{e}_\sigma,\,\underline{\psi}_h)_{\Th}
    +\langle e_{\widehat{u}},\, \underline{\psi}_h\cdot\underline{n}_K\rangle_{\partial\Th}\\
    =&\;
(\alpha^{-1}\underline{e}_\sigma,\,\underline{\psi}_h-\underline\psi)_{\Th}
-(\underline{e}_\sigma,\,\nabla\phi)_{\Th}
    +\langle e_{\widehat{u}},\, (\underline{\psi}_h-\underline{\psi})\cdot\underline{n}_K\rangle_{\partial\Th},    
\end{align*}
where we used the first equation in \eqref{DDD}, 
single-valuedness of $e_{\widehat{u}}$ on the interior mesh skeleton $\mathcal{E}_h^o$ and $e_{\widehat{u}}|_{\partial\Omega}=0$ 
in the second step.
Taking 
$v_h=\phi_h$ in \eqref{ellipErrEqn2}, we have
\begin{align*}
 (\beta e_u, \phi_h)_{\Th}
 =&\;
\underbrace{(\beta e_u, \phi_h)_{\Th}-
  \sum_{K\in\Th}Q_K^1(\beta e_u \phi_h)}_{:=T_3(\phi_h)}\\
  &\;
+
 (\underline{e}_{\sigma}, \nabla\phi_h)_{\Th}
    -\langle\underline{e}_{\sigma}\cdot\underline n_K+\tau_K\varPi_0(e_u-e_{\widehat{u}}), \phi_h\rangle_{\partial\Th}
    +T_1(\phi_h)-T_2(\phi_h)\\
    =&\;
(\underline{e}_{\sigma}, \nabla\phi_h)_{\Th}
    -\langle\underline{e}_{\sigma}\cdot\underline n_K+\tau_K\varPi_0(e_u-e_{\widehat{u}}), \phi_h-\phi\rangle_{\partial\Th}
    +T_1(\phi_h)-T_2(\phi_h)+T_3(\phi_h),    
\end{align*}
where in the second step we used the single-valuedness of 
$\underline{e}_{\sigma}\cdot\underline n_K+\tau_K\varPi_0(e_u-e_{\widehat{u}})$
on the interior mesh skeleton $\mathcal{E}_h^o$ 
and $\phi|_{\partial\Omega}=0$.
Combing the above three equalities and simplifying, we get 
\begin{align*}
    \|e_u\|_0^2
    = &\;\underbrace{-(\alpha^{-1}\underline{e}_\sigma+
    \nabla e_u, \underline\psi-\underline\psi_h)_{\Th}}_{:=I_1}
    +\underbrace{\langle e_u-\widehat{e}_u, (\underline\psi-\underline\psi_h)\cdot\underline n_K\rangle_{\partial\Th}}_{:=I_2}
    +
    \underbrace{(\beta e_u+
    \nabla\cdot\underline{e}_\sigma, \phi-\phi_h)_{\Th}}_{:=I_3}
\\
&\;
+\underbrace{\langle \tau_K\Pi_0(e_u-\widehat{e}_u), (\phi-\phi_h)\rangle_{\partial\Th}}_{:=I_4}
+T_1(\phi_h)-T_2(\phi_h)+T_3(\phi_h).
\end{align*}
Using approximation properties of the $L_2$-projections and \eqref{cont}, we can bound each of the above right hand side terms by the following:
\begin{align*}
    I_1\le &\;(\|\nabla e_u\|_0
    +\alpha_0^{-1/2}\|\alpha^{-1/2}\underline e_\sigma\|_0)
    \,\|\underline\psi-\underline\psi_h\|_0\lesssim \alpha_0^{-1/2}h^2\Xi\|\psi\|_1,\\
    I_2\lesssim &\;
    \|h_K^{-1/2}(e_u-\widehat e_u)\|_{0,\partial\Th}
\|h_K^{1/2}(\underline\psi-\underline\psi_h)\|_{0,\partial\Th}
\lesssim
\alpha_0^{-1/2}h^2\Xi\|\psi\|_1,\\
I_3\lesssim&\; (\beta_1\|e_u\|_0+\|\nabla\cdot\underline{e}_\sigma\|_0) \|\phi-\phi_h\|_0
\lesssim \left(\beta_1\|e_u\|_0+\alpha_1^{1/2}\Xi\right) h^2\|\phi\|_2
,\\
I_4\lesssim &\; 
    \|\tau_K^{1/2}\Pi_0(e_u-\widehat e_u)\|_{0,\partial\Th}
\|\tau_K^{1/2}(\phi-\phi_h)\|_{0,\partial\Th}
\lesssim \alpha_1^{1/2}\Xi h^2\|\phi\|_2,\\
T_3(\phi_h)\lesssim &\;h\|\beta\|_{1,\infty}\|e_u\|_{1,h}\|\phi_h\|_0
\lesssim
\|\beta\|_{1,\infty}\alpha_0^{-3/2}h^2\Xi\|e_u\|_0,\\ 
T_1(\phi_h)-T_2(\phi_h)\lesssim &\;
h\|\beta\|_{1,\infty}\|u-u_h^1\|_{1,h}\|\phi_h\|_0+
h^{1+d/2}\|f-\beta u\|_{1,\infty}\|\phi_h\|_{1}\\
\lesssim &\; 
\left(\|\beta\|_{1,\infty}\alpha_0^{-3/2}h^2\Theta
+h^{1+d/2}\alpha_0^{-1}\|f-\beta u\|_{1,\infty}\right)\|e_u\|_{0}.
\end{align*}
Combining these inequalities with the regularity assumption \eqref{reg}, we finally get
\begin{align*}
    \|e_u\|_0^2\lesssim&\; h^2\Xi(\alpha_0^{-1/2}\|\underline\psi\|_1+\alpha_1^{1/2}\|\phi\|_2)\\
    &\;+
\left(\beta_1\|\phi\|_2
+\|\beta\|_{1,\infty}\alpha_0^{-3/2}(\Theta+\Xi)
+h^{d/2-1}\alpha_0^{-1}\|f-\beta u\|_{1,\infty}
\right)h^2\|e_u\|_0\\
\lesssim \;
\Bigg(&c_{reg}\Xi+\frac{c_{reg}\alpha_0^{-1/2}\beta_1}{\alpha_1^{1/2}+\beta_1^{1/2}h}\Xi
+\|\beta\|_{1,\infty}\alpha_0^{-3/2}(\Theta+\Xi)
+h^{d/2-1}\alpha_0^{-1}\|f-\beta u\|_{1,\infty}
\Bigg)h^2\|e_u\|_0.
\end{align*}
The estimate \eqref{dual-q} now following from the above inequality and  \eqref{dual-ex} by the triangle inequality. This completes the proof of \eqref{dual-q}.\qed

\bibliography{mgHDG}
\bibliographystyle{siam}

\end{document}